\numberwithin{equation}{section}
\newtheorem{theorem}{Theorem}[section]
\newtheorem{proposition}[theorem]{Proposition}
\newtheorem{lemma}[theorem]{Lemma}
\newtheorem{corollary}[theorem]{Corollary}
\newtheorem{remark}[theorem]{Remark}
\def\ba{\begin{array}}
\def\ea{\end{array}}
\def\bea{\begin{eqnarray} \label}
\def\eea{\end{eqnarray}}
\def\be{\begin{equation} \label}
\def\ee{\end{equation}}
\def\bit{\begin{itemize}}
\def\eit{\end{itemize}}
\def\ben{\begin{enumerate}}
\def\een{\end{enumerate}}
\def\lan{\langle}
\def\ran{\rangle}
\def\ka{\kappa}
\def\EE{\mathbb{E}}
\def\NN{\mathbb{N}}
\def\PP{\mathbb{P}}
\def\QQ{\mathbb{Q}}
\def\RR{\mathbb{R}}
\def\SS{\mathbb{S}}
\def\a{\alpha}
\def\g{\gamma}
\def\d{\delta}
\def\k{\kappa}
\def\r{\varrho}
\def\s{\sigma}
\def\o{\omega}
\def\G{\Gamma}
\def\L{\Lambda}
\def\O{\Omega}
\DeclareMathOperator{\ind}{\mathbf{1}}
\def\bC{\mathbf{C}}
\def\bM{\mathsf{M}}
\def\cB{\mathcal{B}}
\def\cF{\mathcal{F}}
\def\cH{\mathcal{H}}
\def\cR{\mathcal{R}}
\def\cS{\mathcal{S}}
\def\dint{\textup{d}}
\def\var{\textup{var}}
\def\cov{\textup{cov}}
\def\inter{\textup{int}}
\def\bd{\textup{bd}}
\begin{document}

\title{\bfseries Intersection and proximity of processes of flats}

\author{Daniel Hug\footnotemark[1], Christoph Th\"ale\footnotemark[2] and Wolfgang Weil\footnotemark[3]}

\date{}
\renewcommand{\thefootnote}{\fnsymbol{footnote}}
\footnotetext[1]{Karlsruhe Institute of Technology, Department of Mathematics,  D-76128 Germany. E-mail: daniel.hug@kit.edu}

\footnotetext[2]{Ruhr University Bochum, Faculty of Mathematics, D-44780 Bochum, Germany. E-mail: christoph.thaele@rub.de}

\footnotetext[3]{Karlsruhe Institute of Technology, Department of Mathematics,  D-76128 Germany. E-mail: wolfgang.weil@kit.edu}

\maketitle

\begin{abstract}
Weakly stationary random processes of $k$-dimensional affine subspaces (flats) in $\RR^n$ are considered. If $2k\geq n$, then intersection processes are investigated, while in the complementary case $2k<n$ a  proximity process is introduced. The intensity measures of these processes are described in terms of parameters of the underlying $k$-flat process. By a translation into geometric parameters of associated zonoids and by means of integral transformations, several new uniqueness and stability results for these processes of flats are derived. They rely on a combination of known and novel estimates for area measures of zonoids, which are also developed in the paper. Finally, an asymptotic second-order analysis as well as central and non-central limit theorems for length-power direction functionals of proximity processes derived from stationary Poisson $k$-flat process complement earlier works for intersection processes.
\bigskip
\\
{\bf Keywords}. {Asymptotic covariance, associated zonoid, central limit theorem, integral transformation, intersection process, $k$-flats, point process, Poisson hyperplanes, Poisson process, proximity,  stability estimate, second-order analysis, stochastic geometry, weak stationarity.}\\
{\bf MSC}. Primary  44A15, 52A22, 60D05; Secondary 52A39, 53C65, 60F05, 60G55.
\end{abstract}

\section{Introduction}

The aim of this work is to discuss the interplay and duality of the notions `intersection' and `proximity' for locally finite random systems $X$ of $k$-dimensional affine subspaces (called {\it $k$-flats}, for short) in $n$-dimensional Euclidean spaces $\RR^n$, $n\ge 2$. More precisely, we consider weakly stationary point processes $X$ on the space $A(n,k)$ of $k$-flats in $\RR^n$ for $k\in\{0,\ldots ,n-1\}$. Here we call $X$ weakly stationary if its intensity measure is translation invariant, see Section \ref{sec:preliminaries} for precise definitions. Stationary $k$-flat processes are one of the basic models in stochastic geometry and have been studied intensively, for example, in \cite{BL,GHR96,HugLastWeil,Matheron,Mecke91,Schneider99,ST,spodarev1,Thomas84,Weil87}, see also \cite{SW} and the references therein. While stationarity implies weak stationarity, the converse is not true in general. However, for Poisson processes the two concepts are equivalent. 

If $k\geq n/2$, any two $k$-flats of $X$, which are in general position, intersect in a $(2k-n)$-dimensional subspace.  This gives rise to the {\it intersection process} $X_{(2)}$ of order two of $X$. Under a suitable condition on $X$, the process $X_{(2)}$ is again weakly stationary, its 
intensity is called the {\it intersection density} $X_{(2)}$ of $X$. Even if $X$ is a Poisson process, $X_{(2)}$ is not a Poisson process any more. For a stationary Poisson process $X$, the  intensity measure of the intersection process was described in \cite[Theorem 4.4.9]{SW} and, in the case of a stationary Poisson hyperplane process $X$, an upper bound for the intersection density  was given in \cite[Theorem 4.6.5]{SW}, based on the method of associated zonoids. Analogously, intersection processes $X_{(r)}$ of higher order $r\in \{2,\ldots,n\}$ and their intensities $\gamma_{(r)}(X)$ were considered in \cite[Theorem 4.4.8]{SW}, where $X_{(r)}$ arises from the intersections of any selection of $r$ hyperplanes in general position of a stationary Poisson hyperplane process $X$.

If $X$ is a weakly stationary process of $k$-flats with $1\le k< n/2$, a somehow dual situation arises. Again under a suitable condition on $X$, any pair $(E,F)$ of disjoint $k$-flats of $X$ in general position has now a positive distance $d(E,F)>0$, which is attained in uniquely determined points $x_E\in E$ and $x_F\in F$. They give rise to a random line segment $s(E,F)=\overline{x_Ex_F}$, the perpendicular of $E$ and $F$. If we only consider pairs $E,F\in X$ with distance $d(E,F)\le\delta$, for a fixed distance threshold $\delta>0$, then the segment-midpoints $m(E,F)=(x_E+x_F)/2$ 
build a weakly stationary point process in $\RR^n$, the intensity of which is denoted by $\pi(X,\d)$ (without the distance condition, the midpoints may even lie dense in $\RR^n$). The quantity $\pi(X,\d)$ is called \textit{proximity} of $X$ with distance threshold $\d$. For a stationary Poisson process $X$ and $\delta = 1$, it has been introduced and studied by Schneider \cite{Schneider99} (see also \cite[Theorem 4.4.10]{SW}).  It is known that if $X$ is a stationary Poisson line process in $\RR^n$ with fixed intensity, $\pi(X,\delta)$ attains its maximal value if and only if $X$ is isotropic (compare \cite[Theorem 4.6.6]{SW}).

The mentioned duality is based on the fact that stationary Poisson processes $X$ of $k$-flats are in one-to-one correspondence with pairs $(\g,\QQ)$, where $\g>0$ is the intensity and $\QQ$ is the directional distribution of $X$, a probability measure on the Grassmannian $G(n,k)$ of $k$-dimensional linear subspaces of $\RR^n$. This fact allows us to associate with $X$ a dual Poisson process $X^\perp$ of $(n-k)$-flats, which has the same intensity $\g$ and whose directional distribution $\QQ^\perp$ is given by the image of $\QQ$ under the orthogonal complement map $L\mapsto L^\perp$. Then, writing $\k_{n-2k}$ for the volume of the $(n-2k)$-dimensional unit ball and $\pi(X)$ for $\pi(X,1)$, we have the relation
\begin{equation*}
\pi(X)=\k_{n-2k}\,\gamma_{(2)}(X^\perp)
\end{equation*} 
according to the main result in \cite{Schneider99} or the remark after Theorem 4.4.10 in \cite{SW} (both cited results contain a factor $1/2$ which has to be removed). This clearly expresses the relation between the proximity $\pi(X)$ and the second intersection density $\g_{(2)}(X^\perp)$.

\medspace

In this paper, we consider the {\it proximity process} built by the segments $s(E,F)$, described above, and we study intersection processes and proximity processes as well as their interplay in more detail and in greater generality. In particular, we work in the framework of weakly stationary $k$-flat processes. This is also the setting which allows us to construct examples (in the appendix) of $k$-flat processes which are not Poisson, but have moment properties similar to that of Poisson processes. To enhance the readability of our text and to make the paper self-contained, Section \ref{sec:preliminaries} contains the basic notions and results from convex and stochastic geometry which are needed in the following. In particular, we recall the notion of an associated zonoid and provide a brief description of mixed volumes and area measures. In Section \ref{sec:Stability}, we prove a stability result for the area measures of zonoids which cannot be found in the existing literature. Then, in Section \ref{sec:Intersection}, we discuss intersection processes and present some extensions and generalizations of results in \cite{SW}, in particular a stability estimate for the directional distribution of a flat process $X$ if the directional distribution of a lower-dimensional intersection process is given. In Section \ref{sec:proximity}, we introduce the proximity process $\Phi$ associated with a weakly stationary $k_1$-flat process $X_1$ and a weakly stationary $k_2$-flat process $X_2$, if $X_1$ and $X_2$ are stochastically independent and $k_1+k_2<n$. This is the random process of all line segments $s(E,F)$ perpendicular to flats $E\in X_1$ and $ F\in X_2$ in general position and such that the length of $s(E,F)$ is bounded from above by a prescribed distance threshold. We describe its intensity measure in terms of the intensities and the directional distributions of the original processes $X_1$ and $X_2$. We also consider the proximity process $\Phi$ of a single $k$-flat process $X$ with $1\le k<n/2$.  For a Poisson line process $X$, we show that the intensity measure of $\Phi$ determines $X$ uniquely (in distribution), whereas for $k>1$ such a uniqueness result does not hold any more. In addition, for line processes a stability result is obtained. 

In the background of some of our results in Sections \ref{sec:Intersection} and \ref{sec:proximity} is a translation of probabilistic problems to geometric ones via integral transformations, such as the cosine transform. More precisely, we associate a zonoid with a weakly stationary $k$-flat process in $\RR^n$ ($k\in\{1,n-1\}$) and deduce that the intensity and the directional distribution of an intersection process (if $k=n-1$), or the proximity process (if $k=1$), can be re-written as an intrinsic volume and a suitably normalized area measure of this zonoid. In \cite{SW}, this translation together with isoperimetric inequalities for intrinsic volumes has been used to study extremal problems. While this approach is well established by now, stability estimates for lower-order area measure have not been used in stochastic geometry so far. Other stability estimates or stability results for top-order area measures have been developed and applied to problems in stochastic geometry in \cite{BorHug,HugSchneiderTessellations07,HS2010}, for example. It is one purpose of the present paper to fill this gap in the case of weakly stationary $k$-flat processes. 

In the final section, we perform an asymptotic second-order analysis of proximity processes $\Phi$ derived from a stationary Poisson $k$-flat process, $1\le k<n/2$, and establish central and non-central limit theorems. In the dual situation, namely for intersection processes, corresponding results have recently been obtained in \cite{LPST,RS,STScaling}. First, we derive a univariate as well as a multivariate central limit theorem for certain length-power direction functionals of $\Phi$. Then, we investigate the limiting behavior of certain order statistics and derive, for example, a Weibull limit theorem for the shortest segment of the proximity process. This generalizes parts of the theory developed in \cite{ST}. These results are based on the Wiener-It\^o chaos expansion of U-statistics of Poisson processes from \cite{RS}, relying itself on the Fock space representation and on covariance identities of general Poisson functionals developed in \cite{LastPenrose}. We also use recent cumulant formulae for U-statistics of Poisson point processes from \cite{LPST} together with the classical method of cumulants to prove our central limit theorems.

The paper is completed by an appendix in which an explicit construction of a class of weakly stationary $k$-flat processes $X$ is provided, which are of type $(S_r)$ and are not Poisson processes. Here, the $(S_r)$-condition is the crucial assumption under which some of the results in Sections \ref{sec:Intersection} and \ref{sec:proximity} are derived. It requires that the $r$th factorial moment measure of $X$ is equal to the $r$-fold product of the intensity measure of $X$, a condition which is satisfied by Poisson processes for all $r\ge 2$. Since we have found in the literature a construction of such processes only for $k=0$ and $r=2$, we include the material here to motivate our more general framework and to make the paper reasonably self-contained.

\section{Preliminaries}\label{sec:preliminaries}

\paragraph{General notation.} Throughout the following, we work in an $n$-dimensional Euclidean space $\RR^n$, $n\geq 2$, with 
scalar product $\langle\,\cdot\,,\,\cdot\,\rangle$ and norm $\|\,\cdot\,\|$. Hence, $\|x-y\|$ is the Euclidean distance of two points $x,y\in\RR^n$ and  $d(X,Y):=\inf\{\|x-y\|:x\in X,y\in Y\}$ is the distance of two sets $X,Y\subset\RR^n$. If $X=\{x\}$ is a singleton, we write $d(x,Y)$ for the distance of $\{x\}$ and $Y$. For a linear or affine subspace $E$ of $\RR^n$, we use $\ell_E$ for the Lebesgue measure on $E$ and $\s_E$ for the spherical Lebesgue measure on the unit sphere $\SS_E :=\{x\in L(E):\|x\|=1\}$ in $L(E)$, where $L(E)$ is the linear subspace parallel to $E$. For convenience we put $\SS^{n-1}:=\SS_{\RR^n}$, $\s_k:=\s_{\RR^k}$, $\ell_k:=\ell_{\RR^k}$ for $k\in\{1,\ldots,n\}$ and $\ell:=\ell_n$. The unit ball in $\RR^k$ is denoted by $B^k$ and we put $\k_k:=\ell_k(B^k)$ and $\o_k:=\s_k(\SS^{k-1})=k\k_k$.  For $s\geq 0$, let $\cH^s$ be the $s$-dimensional Hausdorff measure. In particular, $\cH^0$ is the counting measure.

\paragraph{Grassmannians.} For $k\in\{0,\ldots,n\}$, we denote by $G(n,k)$ and $A(n,k)$ the spaces of $k$-dimensional linear and affine subspaces of $\RR^n$, respectively, both supplied with their natural topologies (see \cite[Section 13.2]{SW}, for example). The elements of $A(n,k)$ are also called {\it $k$-flats}. By $\nu_k$ we denote the unique Haar probability measure on $G(n,k)$ and by $\mu_k$ the invariant measure on $A(n,k)$, normalized as in \cite{SW}.


Recall that two subspaces $L\in G(n,k_1)$ and $M\in G(n,k_2)$ are said to be in {\it general position} if the linear hull of $L\cup M$ has dimension $k_1+k_2$ if $k_1+k_2<n$ or if $L\cap M$ has dimension $k_1+k_2-n$ if $k_1+k_2\ge n$. We also say that two flats $E\in A(n,k_1)$ and $F\in A(n,k_2)$ are in general position if this is the case for $L(E)$ and $L(F)$. 
In particular, if $k\ge n/2$ and $E,F\in A(n,k)$ are in general position, then $E\cap F$ is a $(2k-n)$-flat. If $k< n/2$ and $E,F\in A(n,k)$ are in general position, then there are two uniquely determined points $x_E\in E$ and $x_F\in F$ such that $d(E,F)=\|x_E-x_F\|$ and we call $s(E,F) := \overline{x_Ex_F}$ the {\it orthogonal segment} and $m(E,F) :=(x_E+x_F)/2$ the {\it midpoint} of $E$ and $F$.

For linear subspaces $L_1,\ldots,L_r\subset\RR^n$, satisfying either $\dim(L_1)+\ldots+\dim(L_r)\leq n$ or $\dim(L_1)+\ldots+\dim(L_r)\geq (r-1)n$, we denote by $[L_1,\ldots,L_r]$ the {\it subspace determinant}  (see \cite[Section 14.1]{SW}). In particular, if 
$L\in G(n,k_1)$ and $M\in G(n,k_2)$ with  $k_1+k_2\leq n$, then $[L,M]$ is the $(k_1+k_2)$-volume of the parallelepiped spanned by $u_1,\ldots,u_{k_1},v_1,\ldots,v_{k_2}$, where $u_1,\ldots,u_{k_1}$ is an orthonormal basis in $L$ and $v_1,\ldots,v_{k_2}$ an orthonormal basis in $M$. If otherwise $k_1+k_2>n$, then $[L,M]=[L^\perp,M^\perp]$. 
If $E_1,\ldots,E_r$ are affine flats, we define $[E_1,\ldots,E_r]:=[L_1,\ldots,L_r]$, where $L_i=L(E_i)$ for $i=1,\ldots,r$.
In addition, if $k\geq 1$ and $u_1,\ldots,u_k\in\SS^{n-1}$ are unit vectors, we denote by $\nabla_k(u_1,\ldots,u_k)$ the $k$-volume of the parallelepiped spanned by $u_1,\ldots,u_k$. 
Using the subspace determinant, the notion of general position of flats $E,F\in A(n,k)$ can easily be generalized to more than two flats and to different dimensions. Namely, $E_1,\ldots,E_r \in \bigcup_{i=1}^{n-1}A(n,i)$ are in {\it general position} if and only if $[E_1,\ldots,E_r]> 0$ in either of the two cases considered above. In particular, if $E_1,\ldots,E_r$ are in general position with $\dim(E_i) =: k_i$  and if $q:=k_1+\ldots+k_r-(r-1)n\ge 0$, then the subspaces
$L(E_1)^\perp,\ldots,L(E_r)^\perp$ are linearly independent, and therefore
$$
\dim(E_1\cap\ldots\cap E_r)=n-\dim\big(L(E_1)^\perp+\ldots+L(E_r)^\perp\big)=n-(n-k_1)-\ldots-(n-k_r)=q\,.
$$

At several occasions, we will need the value of an integrated subspace determinant. For this, fix integers $0< r,s\leq n-1$ such that $n-r-s\geq 0$ and let $L\in G(n,r)$. Then
\begin{equation}\label{eq:Defcnk1k2}
c(n,r,s):=\int\limits_{G(n,s)}[L,M]\,\nu_{s}(\dint M) = \frac{\binom{n-r}{s}\k_{n-r}\k_{n-s}}{\binom{n}{s}\k_n\k_{n-r-s}}\,,
\end{equation}
independently of $L$, see \cite[Lemma 4.4]{HugSchneiderSchuster2008} or \cite[Corollary 4-5-5]{Matheron}.

\paragraph{Processes of flats.} For $k\in\{0,\ldots,n-1\}$, a {\it $k$-flat process} $X$ in $\RR^n$ is a simple point process on the space $A(n,k)$. Thus, $X$ is a measurable mapping from an underlying probability space $(\O,\cF,\PP)$ to the space ${\sf N}(A(n,k))$ of simple counting measures on $A(n,k)$ equipped with a natural $\s$-algebra (for more details, see, for example, \cite[Chapter 3.1]{SW}). Alternatively, ${\sf N}(A(n,k))$ can be viewed as the space of  countable subsets $\{E_i:i\in\NN\}$ of $A(n,k)$ satisfying $E_i\neq E_j$ for all $i\neq j$ and such that for any compact subset $B\subset\RR^n$ the set $\{i:E_i\cap B\neq\emptyset\}$ is finite. We will use both interpretations simultaneously. Hence, if $X$ is a $k$-flat process and $A$ is a Borel subset of $A(n,k)$, then $X(A)$ stands for the number of flats $E\in X$ satisfying $E\in A$. The intensity measure $\Theta$ of $X$ is defined by $\Theta(A) :=\EE X(A)$, for Borel sets $A\subset A(n,k)$. We always assume that the intensity measure of a $k$-flat process is locally finite, that is, $\Theta(\{E\in A(n,k):E\cap C\neq\emptyset\})<\infty$ for all compact sets $C\subset\RR^n$. We say that $X$ is \textit{weakly stationary} if $\Theta$ is a translation invariant measure on $A(n,k)$. In this case, Theorem 4.4.1 in \cite{SW} implies that there exists a constant $0\leq\g<\infty$ and a probability measure $\QQ$ on $G(n,k)$ such that 
\begin{equation}\label{eq:Intensity}
\int\limits_{A(n,k)} f(E)\,\Theta(\dint E) = \g \int\limits_{G(n,k)}\int\limits_{L^\perp}f(L+x)\,\ell_{L^\perp}(\dint x)\,\QQ(\dint L)\,,
\end{equation}
for all non-negative measurable functions $f$ on $A(n,k)$. If $\g\neq 0$ and $A\subset G(n,k)$ is a Borel set, it follows that
\begin{equation}\label{eq:GammaQQ}
\g=\k_{n-k}^{-1}\,\EE X(\{E:E\cap B^n\neq\emptyset\})\quad{\rm and}\quad\QQ(A)=\frac{\EE X(\{E:E\cap B^n\neq\emptyset,\,L(E)\in A\})}{\EE X(\{E:E\cap B^n\neq\emptyset\})}\,,
\end{equation}
where it is clear from the context that $E\in A(n,k)$. If $\g=0$, then \eqref{eq:Intensity} holds with an arbitrary and thus non-unique probability measure $\QQ$ on $G(n,k)$. The representations in \eqref{eq:GammaQQ} explain why $\g$ is called the {\it intensity} of $X$ and $\QQ$ its {\it directional distribution}. The process $X$ is said to be {\it stationary} if $X+z$ has the same distribution as $X$, for all $z\in\RR^n$. Clearly, a stationary $k$-flat process is weakly stationary, but in general the converse is not true. Moreover, we say that $X$ is {\it isotropic} if its distribution is invariant under rotations. In this case and if $\gamma>0$, $\QQ$ is the Haar probability measure $\nu_k$ on $G(n,k)$.

A special class of $k$-flat processes arises if we assume that $X(A)$ is Poisson distributed with mean $\Theta(A)$, for all Borel sets $A\subset A(n,k)$. By R\'enyi's theorem \cite[Theorem 6.5.12]{Cinlar}, this automatically implies that the random variables $X(A_1),\ldots,X(A_j)$ are independent for disjoint Borel sets $A_1,\ldots,A_j\subset A(n,k)$ and $j\in\NN$. In this case, we call $X$ a {\it Poisson $k$-flat process}. Such processes are particularly attractive since their distribution is uniquely determined by their intensity measure $\Theta$, cf.\ \cite[Theorem 3.2.1]{SW}. In other words, a weakly stationary Poisson $k$-flat process is uniquely determined in distribution by the intensity $\g$ and the directional distribution $\QQ$. Moreover, a weakly stationary Poisson $k$-flat process is stationary.

\paragraph{Factorial moment measures and processes of type $(S_r)$.} Let $X$ be a $k$-flat process in $\RR^n$ with intensity measure $\Theta=\EE X$ and $k\in\{0,\ldots,n-1\}$. For $m\in\NN$, the 
$m$-fold product measure $\Theta^m$ of $\Theta$ is a measure on the Borel sets of $A(n,k)^m$. In addition, we will use the {\it $m$th factorial moment measure} $\Theta^{(m)}$ of $X$, which is defined for measurable subsets $A\subset A(n,k)^m$ by 
\begin{equation}\label{eq:Campbell}
\Theta^{(m)}(A):=\EE\sum_{(E_1,\ldots,E_m)\in X_{\neq}^m} \ind\{(E_1,\ldots,E_m)\in A\}\,,
\end{equation}
where $X_{\neq}^m$ denotes the collection of all $m$-tuples of distinct $k$-flats of $X$, see \cite[Theorem 3.1.3]{SW}. If $X$ is a Poisson process, then the 
  (multivariate) \textit{Mecke formula} \cite[Corollary 3.2.3]{SW} implies that  $\Theta^{(m)}=\Theta^m$ (see \cite[Corollary 3.2.4]{SW}), for all integers $m\ge 2$. For a given 
integer $r\ge 2$ and a $k$-flat process $X$ we say that $X$ is {\it of type $(S_r)$}, if $\Theta^{(r)}=\Theta^r$. It is known that $X$  is a Poisson process if and only if $X$ is of type $(S_r)$ for all 
$r\ge 2$ (see \cite[Theorem 4.4]{Heinrich}). Examples of (weakly stationary) $k$-flat processes $X$, which are not Poisson but of type $(S_r)$ for some $r\ge 2$, are constructed in the appendix.

\paragraph{Segment processes.} By $\cS$ we denote the space of non-degenerate line segments in $\RR^n$. We supply $\cS$ with the Borel $\sigma$-algebra generated by the Hausdorff metric. A \textit{segment process} $\Phi$ is a simple point process on the measurable space $\cS$. As in the case of $k$-flat processes, it is convenient to identify the random counting measure $\Phi$ with the random collection of line segments induced by it. 
We notice that a segment process is a point process of convex particles of dimension one as considered in Chapter 4 of \cite{SW}. In particular, $\Phi$ is {\it stationary}, if its distribution is invariant under translations, and  {\it isotropic} if it is invariant under rotations around the origin. We say that $\Phi$ is \textit{weakly stationary}, if its intensity measure is translation invariant. Clearly, a stationary segment process is weakly stationary, but in general not vice versa.

For a line segment $s\in\cS$ let us denote by $m(s)\in\RR^n$ its midpoint, by $d(s):=\ell_1(s)\in(0,\infty)$ its length and by $\phi(s)\in G(n,1)$ its direction, which is the $1$-dimensional linear subspace parallel to $s$. Then, $\cS$ can be identified with the product space $\RR^n\times(0,\infty)\times G(n,1)$. This transfer corresponds to the representation of a segment process $\Phi$ as a marked point process on $\RR^n$ with mark space $(0,\infty)\times G(n,1)$. For the intensity measure $\Lambda(\,\cdot\,):=\EE \Phi(\,\cdot\,)$ of $\Phi$ it will be convenient to replace $G(n,1)$ by $\SS^{n-1}$ and to regard $\Lambda$ as a measure on  $\RR^n\times(0,\infty)\times\SS^{n-1}$, which is symmetric (even) in its third component in that $\L(\,\cdot\,,\,\cdot\,,C)=\L(\,\cdot\,,\,\cdot\,,-C)$ for any Borel set $C\subset\SS^{n-1}$. This can be achieved by identifying each line in $G(n,1)$ with one of its generating unit vectors in $\SS^{n-1}$ and then by symmetrizing the resulting measure (see the subsequent discussion for associated  zonoids). If $\Phi$ is weakly stationary and $\Lambda\neq 0$, $\Lambda$ is translation invariant in the first component, the \textit{intensity} $N$ is non-zero and $N$ and the \textit{directional distribution} $\cR$ of $\Phi$ are given by
\begin{equation}\label{eq:NvR}
N:=\kappa_n^{-1}\,\L(B^n\times(0,\infty)\times\SS^{n-1})\qquad{\rm and}\qquad \cR(C):=\frac{\L(B^n\times(0,\infty)\times C)}{\L(B^n\times(0,\infty)\times\SS^{n-1})}\,,
\end{equation}
where $C\subset\SS^{n-1}$ is a Borel set.

\paragraph{Associated zonoids.}
Let $K\subset\RR^n$ be a convex body (a non-empty, compact, convex set). We refer to \cite{Schneider93} for notions from convex geometry and their basic properties, which we use here and in the following.  The support function $h(K,\,\cdot\,):\RR^n\to\RR$ of $K$ is defined by 
$h(K,u):=\max\{\langle x,u\rangle:x\in K\}$. Any sublinear (i.e., subadditive and positively homogeneous of degree 1) real function on $\RR^n$ 
 is the support function of a uniquely determined convex body (see \cite[Theorem 1.7.1]{Schneider93}). Hence, 
for a finite and even Borel measure $\mu$ on $\SS^{n-1}$, there exists a convex body $Z_\mu$ with support function $h(Z_\mu,\,\cdot\,)$ given by 
\begin{equation}\label{support}
h(Z_\mu,x)=\frac{1}{2}\int\limits_{\SS^{n-1}}|\langle v,x\rangle|\, \mu(\dint v)\,,\qquad x\in\RR^n\,.
\end{equation}
Recall that a zonotope is the Minkowski sum of finitely many line segments and that a zonoid is a centrally symmetric convex body, which can be approximated (in the sense of the Hausdorff distance) by zonotopes. 
Since $x\mapsto |\langle v,x\rangle|$ is the support function of the segment $\overline{(-v)v}$, the convex body $Z_\mu$ is a zonoid. We call it the \textit{zonoid associated with $\mu$}. In this context, $\mu$ is called the \textit{generating measure} of $Z_\mu$.

In the following, we will often replace a Borel measure on $G(n,1)$ or on $G(n,n-1)$ by an even measure on $\SS^{n-1}$. To describe this in more detail, first let $\QQ$ 
be a Borel measure on $G(n,1)$, and let $C\subset \SS^{n-1}$ be a Borel set. Then 
\begin{equation}\label{symmeasure}
\tilde{\QQ}(C):=\frac{1}{2}\int_{G(n,1)}\mathcal{H}^0(C\cap U)\,\QQ(\dint U)=\frac{1}{2}\int_{G(n,1)}\int_U\ind_C(u)\,\mathcal{H}^0(\dint u)\, \QQ(\dint U)
\end{equation}
defines an even Borel meausure on $\SS^{n-1}$. Let $L(u)\in  G(n,1)$ be the subspace spanned by  $u\in \SS^{n-1}$.  Then, for any measurable function $f:G(n,1)\to[0,\infty)$, 
$$
\int_{G(n,1)}f(U)\, \QQ(\dint U)=\int_{\SS^{n-1}}f(L(u))\,\tilde{\QQ}(\dint u).
$$
Moreover, if $h:\SS^{n-1}\to[0,\infty)$ is measurable and even, and $\tilde h(U):=h(u)=h(-u)$ for $u\in U\cap \SS^{n-1}$ and $U\in G(n,1)$, then 
$$
\int_{\SS^{n-1}}h(u)\,\tilde{\QQ}(\dint u)=\int_{G(n,1)}\tilde{h}(U)\, \QQ(\dint U).
$$
Alternatively, if $T_+,T_-:G(n,1)\to\SS^{n-1}$ are  Borel measurable maps such that $\{T_+(U),T_-(U)\}=U\cap \SS^{n-1}$ (obtained, e.g., by lexicographic ordering), then $\tilde{\QQ}=\frac{1}{2}(T_+\QQ+T_-\QQ)$. With these remarks in mind, starting from a measure $\QQ$ on $G(n,1)$, we write again $\QQ$ for the even measure $\tilde{\QQ}$ on $\SS^{n-1}$. Similarly, if $\QQ$ is a given Borel measure on $G(n,n-1)$, we apply the preceding definitions to $\QQ^\perp$ (the measure obtained from $\QQ$ as the image measure of the orthogonal complement map), and thus again arrive at an even Borel measure $\hat\QQ$ on $\SS^{n-1}$, that is,
\begin{equation}\label{symhat}
\hat\QQ(C):=\frac{1}{2}\int_{G(n,n-1)}\mathcal{H}^0(C\cap U^\perp)\,\QQ(\dint U)=\frac{1}{2}\int_{G(n,n-1)}\int_{U^\perp}\ind_C(u)\,\mathcal{H}^0(\dint u)\, \QQ(\dint U),
\end{equation}
for a Borel set $C\subset \SS^{n-1}$. 
With the same notation as before, we then have
$$
\int_{G(n,n-1)}f(U)\, \QQ(\dint U)=\int_{\SS^{n-1}}f(u^\perp)\,\hat{\QQ}(\dint u).
$$
and
$$
\int_{\SS^{n-1}}h(u)\,\hat{\QQ}(\dint u)=\int_{G(n,n-1)}\tilde{h}(U^\perp)\, \QQ(\dint U).
$$

Let $X$ be a weakly stationary process of $k$-flats in $\RR^n$ with intensity $\g>0$ and directional distribution $\QQ$ and suppose that $k\in\{1,n-1\}$ (these two cases are sufficient for the applications below).  In this situation, the associated zonoids $Z_{\QQ}$ and $\gamma Z_{\QQ}=Z_{\gamma\QQ}$ are given by \eqref{support}, with the 
aforementioned identifications (see also the discussion in \cite[p.~131]{SW}). If $\gamma=0$, then $\gamma Z_{\QQ}=Z_{\gamma\QQ}=\{0\}$, even if $\QQ$ is not unique in this case.

\paragraph{Mixed and intrinsic volumes.}
Let $K_1,\ldots,K_m\subset\RR^n$, $m\geq 1$, be convex bodies and $a_1,\ldots,a_m\geq 0$ be real numbers. The volume of the Minkowski sum $a_1K_1+\ldots+a_mK_m$ can be expressed as a polynomial 
$$\ell(a_1K_1+\ldots+a_mK_m)=\sum_{i_1,\ldots,i_n=1}^m a_{i_1}\cdots a_{i_n}\,V(K_{i_1},\ldots,K_{i_n})$$ 
with uniquely determined symmetric coefficients $V(K_{i_1},\ldots,K_{i_n})$, which are the \textit{mixed volumes} of $K_1,\ldots,K_m$. For two convex bodies $K,L\subset\RR^n$ we shall use the special notation 
$$V(K[k],L[n-k]):=V(\underbrace{K,\ldots,K}_{k\ {\rm times}},\underbrace{L,\ldots,L}_{n-k\ {\rm times}})\,,\qquad k\in\{0,\ldots,n\}\,.$$ 
A similar notation is used if three convex bodies are involved. The \textit{intrinsic volumes} of a convex body $K\subset\RR^n$ are then given by 
\begin{equation}\label{eq:IntVolMixedVolRelation}
V_k(K):=\frac{\binom{n}{ k}}{\k_{n-k}}V(K[k],B^n[n-k])\,,\qquad k\in\{0,\ldots,n\}\,.
\end{equation} 
In particular, $V_0(K)={\bf 1}\{K\neq\emptyset\}$ is the Euler characteristic of $K$, $V_1(K)$ is a constant multiple of its mean width, $V_{n-1}(K)$ is half of the surface area of $K$ and $V_n(K)=\ell(K)$. For further background material we refer the reader to \cite{Schneider93} or Chapter 14 of \cite{SW}.

\paragraph{Area measures.}
We will have to deal with the so-called area measures of convex bodies. To introduce them, we need some notation. Fix a convex body $K$ and let $x\in\RR^n\setminus K$. By $\pi_K(x)$ we mean the unique nearest point of $x$ in $K$ (i.e., on its boundary) and by $n_K(x)$ we denote the unit vector pointing from $\pi_K(x)$ to $x$. For a Borel set $B\subset\SS^{n-1}$ and $r>0$, we consider the local parallel set $A(K,B,r):=\{x\in\RR^n:0<d(x,K)\leq r,\,n_K(x)\in B\}$ of points in $\RR^n\setminus K$ with distance at most $r$ from $K$ and such that their associated unit normals $n_K(x)$ are in $B$. Then, $A(K,B,r)$ is a Borel set and the volume $\ell(A(K,B,r))$ of $A(K,B,r)$ is a polynomial of degree $n$ in $r$, i.e., $$\ell(A(K,B,r))=\frac{1}{ n}\sum_{m=0}^{n-1}\binom{n}{ m}\,S_m(K,B)\,r^{n-m}\,.$$ The coefficients $S_0(K,B),\ldots,S_{n-1}(K,B)$ are Borel measures on $\SS^{n-1}$ in their second component. These are the \textit{area measures} associated with $K$, see Chapter 4.2 in \cite{Schneider93}. While $S_0(K,\cdot)$ is, 
independently of $K$, the spherical Lebesgue measure, the measure $S_m(K,\cdot)$, for $m\in\{1,\ldots,n-1\}$, uniquely determines $K$ up to translations among all convex bodies of dimension at least $m+1$, cf.\ \cite[Corollary 8.1.4]{Schneider93}. We also recall that the total measure of the $m$th area measure of $K$ is related to its $m$th intrinsic volume by
\begin{equation}\label{eq:VmSmRelation}
V_m(K)=\frac{\binom{n}{ m}}{ n\k_{n-m}}S_m(K,\SS^{n-1})\,,
\end{equation}
see \cite[Chapter 4.2, p.~216]{Schneider93}.

\section{A stability result}\label{sec:Stability}

We will later use a stability result for area measures of associated zonoids in terms of the corresponding generating measures. For area measures, stability results are known from \cite{HS,K,Schneider93} and for zonoids they have been investigated in  \cite{HS,HS2010,K,Schneider93}. Here, we combine these approaches and we also replace the total variation distance of measures by the bounded Lipschitz distance and by the Prohorov distance, which leads to stronger results. In view of our later application to Grassmannians $G(n,k)$, we consider in this paragraph a separable compact metric space $(S,\varrho)$ with Borel $\sigma$-field $\cB(S)$. The space of finite Borel measures on $S$ is denoted by $\bM(S)$ and for $0<\rho\leq R<\infty$, we let $\bM(\rho,R)$ be the subspace of all $\mu\in\bM(S)$ for which $\rho\le\mu(S)\le R$. We shall assume that all measures we are dealing with are finite (non-negative) Borel measures. For two such measures $\mu$ and $\nu$ the {\it Prohorov distance} (sometimes also called L\'evy-Prohorov distance) is defined by
$$
d_P(\mu,\nu) := \inf\{ \varepsilon >0 : \mu (A) \le \nu (A^\varepsilon) + \varepsilon,\, \nu (A) \le \mu (A^\varepsilon) + \varepsilon \text{ for all } A\in \cB(S)\},
$$
where $A^\varepsilon := \{x\in S:\varrho(x,A)<\varepsilon\}$ and $\varrho(x,A):=\inf\{\varrho(x,a):a\in A\}$ for $x\in S$ and $A\in\cB(S)$, see \cite[Chapter 8.3]{Bogachev} or \cite[Chapter 11.3]{Dud}.  For a function $f:S\to\RR$ we define the sup-norm, the Lipschitz-norm as well as the bounded Lipschitz-norm by
\begin{equation}\label{eq:SupUndLNorm}
\|f\|_\infty:=\sup\{|f(x)|:x\in S\},\qquad \|f\|_L:=\sup\left\{\frac{|f(x)-f(y)|}{\varrho(x,y)}:x,y\in S,x\neq y\right\}\,,
\end{equation}
and
$$
\|f\|_{BL}:=\|f\|_\infty+\|f\|_L\,,
$$
respectively. We put $\mathcal{F}_{BL}(S):=\{f:S\to \RR:\|f\|_{BL}\le 1\}$. 
The \textit{bounded Lipschitz distance} of $\mu$ and $\nu$ is defined by
$$
d_{BL}(\mu,\nu):=\sup\bigg\{\Big|\int\limits_S f\, \dint\mu-\int\limits_S f\,\dint\nu\Big|:\, f\in \mathcal{F}_{BL}(S)\bigg\}\,,
$$
see \cite[Chapter 8.3]{Bogachev}, \cite{Dudley1} or \cite[Chapter 11.3]{Dud}. We remark that both, the Prohorov distance as well as the bounded Lipschitz distance metrize the topology of weak convergence of measures on $S$.
The next lemma is known for probability measures (cf.\ \cite[Theorem 8.10.43]{Bogachev}).

\begin{lemma}\label{compare}
Let $0<\rho\leq R<\infty$ and $\mu,\nu\in \bM(\rho,R)$. Then there are constants  $c_1,c_2>0$ depending only on $\rho$ and $R$ such that 
$$
c_1\, d_P(\mu,\nu)^2\le d_{BL}(\mu,\nu)\le c_2\, d_P(\mu,\nu)\,.
$$
\end{lemma}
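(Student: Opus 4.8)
The plan is to establish the two inequalities separately, reducing the finite-measure case to the known probability-measure estimate from \cite[Theorem 8.10.43]{Bogachev}. Throughout, the key observation is that for $\mu,\nu\in\bM(\rho,R)$ the total masses $\mu(S),\nu(S)$ are bounded between $\rho$ and $R$, and both distances behave in a controlled, quasi-homogeneous way under scaling of measures.

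For the right-hand inequality $d_{BL}(\mu,\nu)\le c_2\,d_P(\mu,\nu)$, I would argue directly. Fix $\e>d_P(\mu,\nu)$, so that $\mu(A)\le\nu(A^\e)+\e$ and $\nu(A)\le\mu(A^\e)+\e$ for all Borel $A$. For $f\in\mathcal{F}_{BL}(S)$ one has $\|f\|_\infty\le 1$ and $\|f\|_L\le 1$. The standard route is to write $\int f\,\dint\mu-\int f\,\dint\nu$ using the layer-cake (distribution-function) representation of the integral of $f$ against each measure and then to compare the superlevel sets of $f$ via their $\e$-neighbourhoods, exploiting that $\{f>t\}^\e\subset\{f>t-\e\}$ because $\|f\|_L\le1$. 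Carrying this through produces a bound of the form $|\int f\,\dint\mu-\int f\,\dint\nu|\le (C+\mu(S)+\nu(S))\,\e$ for an absolute constant, and since $\mu(S),\nu(S)\le R$ this yields $d_{BL}(\mu,\nu)\le c_2\,d_P(\mu,\nu)$ with $c_2$ depending only on $R$. Letting $\e\downarrow d_P(\mu,\nu)$ closes this direction.

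For the left-hand inequality $c_1\,d_P(\mu,\nu)^2\le d_{BL}(\mu,\nu)$, I would reduce to probability measures and invoke the cited result. Set $p:=\mu(S)$, $q:=\nu(S)$, both in $[\rho,R]$, and consider the normalisations $\bar\mu:=\mu/p$, $\bar\nu:=\nu/q$. First control the discrepancy in total mass: taking $f\equiv1\in\mathcal{F}_{BL}(S)$ shows $|p-q|\le d_{BL}(\mu,\nu)$. Next, a triangle-inequality estimate on the probability measures gives $d_{BL}(\bar\mu,\bar\nu)\le \frac{1}{\rho}\,d_{BL}(\mu,\nu)+\frac{|p-q|}{\rho}\le\frac{2}{\rho}\,d_{BL}(\mu,\nu)$, using $\|f\|_\infty\le1$ and $p,q\ge\rho$. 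Then the probability-measure version \cite[Theorem 8.10.43]{Bogachev} supplies constants with $d_P(\bar\mu,\bar\nu)^2\le c\,d_{BL}(\bar\mu,\bar\nu)$. Finally I must transfer the Prohorov bound back from $(\bar\mu,\bar\nu)$ to $(\mu,\nu)$: one checks from the definition of $d_P$ that rescaling a measure by a factor bounded between $\rho$ and $R$ changes the Prohorov distance by at most a multiplicative constant plus an additive term controlled by $|p-q|$, and since $|p-q|\le d_{BL}(\mu,\nu)$ this additive term is absorbed. Combining these estimates gives $d_P(\mu,\nu)^2\le C(\rho,R)\,d_{BL}(\mu,\nu)$, i.e.\ the claimed lower bound with $c_1$ depending only on $\rho$ and $R$.

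The main obstacle I anticipate is the bookkeeping in the left-hand inequality, specifically transferring the Prohorov distance between the \emph{normalised} measures back to the original ones. The Prohorov distance is defined through an implicit $\inf$ over $\e$ with a coupled system of inequalities, and it is not perfectly homogeneous under scaling: multiplying both masses by a common factor rescales the $+\e$ slack terms, and the masses $p,q$ differ in general, so one cannot simply factor out a constant. The careful point is to verify, from the infimum definition, that replacing $\bar\mu,\bar\nu$ by $\mu,\nu$ inflates the admissible $\e$ by at most a factor depending on $\rho,R$ together with an additive $|p-q|$-term, and then to check that the squaring in the probability-measure estimate survives this linear distortion. All the other steps—the layer-cake comparison and the normalisation estimate for $d_{BL}$—are routine once the scaling behaviour of $d_P$ on $\bM(\rho,R)$ is pinned down.
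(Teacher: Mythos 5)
Your proposal is correct and follows essentially the same route as the paper: the left-hand inequality is obtained exactly as in the paper's proof, by controlling $|\mu(S)-\nu(S)|$ with $f\equiv 1$, passing to the normalised probability measures, invoking \cite[Theorem 8.10.43]{Bogachev}, and then transferring the Prohorov bound back (where, as you anticipate, the additive $|p-q|\le d_{BL}(\mu,\nu)$ term is absorbed into the $\sqrt{d_{BL}(\mu,\nu)}$ term because $d_{BL}(\mu,\nu)\le 2R$). The only cosmetic difference is that for the right-hand inequality the paper simply cites the proof of Theorem 3.1 in \cite{HS}, whereas you sketch the standard layer-cake comparison directly; both yield the same bound with a constant depending only on $R$.
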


\begin{proof}
The right inequality is established in the course of the proof of Theorem 3.1 in \cite{HS}, all arguments  extend to 
the present more general setting of a compact metric space. To establish the left inequality, we use the abbreviation $\varepsilon:=d_{BL}(\mu,\nu)$. 
In the following, we can assume that $\varepsilon>0$. Putting $\mu_1:=\mu(S)$ and $\nu_1:=\nu(S)$, the definition of the bounded Lipschitz-distance with $f\equiv 1$ immediately implies that $|\mu_1-\nu_1|\le\varepsilon$. Then, for any $f:S\to\RR$ with $\|f\|_{BL}\le 1$, we get
\begin{align*}
\Big|\int\limits_S f\, \frac{\dint\mu}{\mu_1}-\int\limits_S f\,\frac{\dint\nu}{\nu_1}\Big|&\le \frac{1}{\mu_1}\Big|\int\limits_S f\, \dint\mu-\int\limits_Sf\,\dint\nu\Big|+\Big|\frac{\nu_1-\mu_1}{\nu_1\mu_1}\Big|\,\Big|\int\limits_S f\, \dint\nu\Big|\\
&\le c_3 d_{BL}(\mu,\nu)+c_4\varepsilon\\
&\le c_5 d_{BL}(\mu,\nu)\,,
\end{align*}
with suitable constants $c_3,c_4,c_5>0$. Hence,
\begin{equation}\label{blbound}
d_{BL}\Big(\frac{\mu}{\mu_1},\frac{\nu}{\nu_1}\Big)\le  c_5 d_{BL}(\mu,\nu)\,.
\end{equation}
Combining \eqref{blbound} with \cite[Theorem 8.10.43]{Bogachev} (see also \cite[Exercise 11.3.5(b)]{Dud}), we conclude that
\begin{equation}\label{firststep}
d_P\Big(\frac{\mu}{\mu_1},\frac{\nu}{\nu_1}\Big)\le\sqrt{\frac{3}{ 2}\,d_{BL}\Big(\frac{\mu}{\mu_1},\frac{\nu}{\nu_1}\Big)}< c_6\sqrt{d_{BL}(\mu,\nu)}
\end{equation}
with $c_6=\sqrt{2 c_5}$. 
Let $A\subset S$ be a Borel set. Recall that $\varepsilon =d_{BL}(\mu,\nu)\le 2R$ and define $\varepsilon':=c_6\sqrt{\varepsilon}$.  
Clearly, we have $\varepsilon\le\sqrt{2R}\sqrt{\varepsilon}$. Hence,
\eqref{firststep} implies that
\begin{align*}
\mu(A)&=\mu_1\frac{\mu(A)}{\mu_1}\le \mu_1\left(\frac{\nu(A^{\varepsilon'})}{\nu_1}+\varepsilon'\right)=\frac{\mu_1}{\nu_1}\nu(A^{\varepsilon'})
+\mu_1\varepsilon'\\
&\le \nu(A^{\varepsilon'})+\left|\frac{\mu_1}{\nu_1}-1\right|\nu(A^{\varepsilon'})+\mu_1\varepsilon'\\
&\le \nu(A^{\varepsilon'})+\left|\mu_1-\nu_1\right| +\mu_1\varepsilon' \\
&\le \nu(A^{\varepsilon'})+\varepsilon +\mu_1\varepsilon'\\
&\le \nu(A^{\varepsilon'})+c_7\varepsilon',
\end{align*}
with a suitable constant $c_7>0$. By symmetry, it follows that 
$$
d_P(\mu,\nu)\le c_8\sqrt{d_{BL}(\mu,\nu)}\,,
$$
with a  constant $c_8>0$, which yields the left inequality in the statement of the lemma.
\end{proof}


The next theorem deals with the particular case $S=\SS^{n-1}$, which is tailored towards our applications later in this paper. The metric $\varrho$ can be chosen (for instance) to be the intrinsic Riemannian metric or the restriction of the Euclidean metric to $\SS^{n-1}$. For $0<\rho\le R<\infty$, we say that a finite Borel measure $\mu$ on $\SS^{n-1}$ has {\it upper bound} $R$, if $\mu(\SS^{n-1})\le R$, and {\it lower bound} $\rho$, if 
$$\int\limits_{\SS^{n-1}}|\langle u,v\rangle|\, \mu(\dint v)\ge\rho\,,\qquad\text{for all}\ u\in\SS^{n-1}\,.$$ 
Let $\bM_{\rm e}(\SS^{n-1})$ be the space of all even and finite Borel measures on $\SS^{n-1}$ and $\bM_{\rm e}(\rho,R)$ the subspace consisting of all $\mu\in\bM_{\rm e}(\SS^{n-1})$ which have upper bound $R$ and lower bound $\rho$. Recall that a measure $\mu$ on $\SS^{n-1}$ is even if $\mu(A)=\mu(-A)$ for all Borel sets $A\subset\SS^{n-1}$, where $-A$ is the image of $A$ under the antipodal map. Clearly, $\bM_{\rm e}(\rho,R)\subset \bM(\rho,R)$ (with the choice  $S=\SS^{n-1}$), and if $\mu\in\bM_{\rm e}(\rho,R)$, then  $2Z_\mu$ lies in $\mathcal{K}(\rho,R)$, which is the class of convex bodies $K$ in $\RR^n$ satisfying 
$\rho B^n\subset K\subset R B^n$.  The \textit{Hausdorff distance} of two convex sets $K,M\subset\RR^n$ 
can be defined in terms of the support functions $h(K,\,\cdot\,), h(M,\,\cdot\,)$ of $K,M$, considered as functions on the unit sphere $\SS^{n-1}$, by $d_H(K,M):=\|h(K,\,\cdot\,)-h(M,\,\cdot\,)\|_{\infty}$, 
see \cite[Chapter 1.8]{Schneider93}. We can now state the main result of this section.

\begin{theorem}\label{stability} Let $m\in\{1,\dots, n-1\}$, and let $\mu,\nu\in\bM_{\rm e}(\rho,R)$ be even Borel measures on $\SS^{n-1}$. Then there is a constant $c=c(n,\rho,R)>0$, depending only on $n,\rho$ and $R$, such that
\begin{equation}\label{eq:GeneralStabilityEstimate}
d_{BL}(\mu,\nu) \le c\, d_{BL}\big(S_m(Z_\mu,\,\cdot\,),S_m(Z_\nu,\,\cdot\,)\big)^{2c(m,n)}\,,
\end{equation}
where $c(m,n):=\left(2^{m}(n+1)(n+4)\right)^{-1}$.
\end{theorem}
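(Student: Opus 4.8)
The plan is to factor \eqref{eq:GeneralStabilityEstimate} through the Hausdorff distance of the two associated zonoids, separating it into a purely geometric stability statement for area measures and an inverse stability statement for the cosine transform. The organising observation is that, by \eqref{support}, the difference $h(Z_\mu,\,\cdot\,)-h(Z_\nu,\,\cdot\,)$ is exactly half the cosine transform of the signed measure $\mu-\nu$, so that $d_H(Z_\mu,Z_\nu)=\|h(Z_\mu,\,\cdot\,)-h(Z_\nu,\,\cdot\,)\|_\infty$ is the sup-norm of that transform. Since $\mu,\nu\in\bM_{\rm e}(\rho,R)$ forces $2Z_\mu,2Z_\nu\in\cK(\rho,R)$, all bodies that occur are uniformly bounded and contain a fixed ball; this a priori control is what makes both steps quantitative. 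Throughout, Lemma \ref{compare} will be used to pass freely between the bounded Lipschitz distance in which the result is phrased and the Prohorov or total variation distances in which the cited area-measure estimates are naturally stated.

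For the first step I would establish that the area measure controls the body, in the form
\begin{equation*}
d_H(Z_\mu,Z_\nu)\le c_1\, d_{BL}\big(S_m(Z_\mu,\,\cdot\,),S_m(Z_\nu,\,\cdot\,)\big)^{1/(2^{m-1}(n+1))},
\end{equation*}
with $c_1=c_1(n,\rho,R)$. This is the quantitative counterpart of the fact that $S_m(K,\,\cdot\,)$ determines $K$ up to translation (here among the centred, full-dimensional zonoids, so the translation ambiguity is absent), as developed in \cite{HS,K,Schneider93}. I expect the factor $2^{m-1}$ to arise from an induction on the order $m$: using Aleksandrov--Fenchel- or Cauchy--Schwarz-type inequalities relating consecutive mixed area measures, control of $S_m$ is passed to control of $S_{m-1}$ at the cost of a square root, i.e.\ of halving the exponent, down to the base case $m=1$. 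The base case is special because $S_1(K,\,\cdot\,)$ is \emph{linear} in the support function, being (up to a constant) $\Delta_{\SS^{n-1}}h(K,\,\cdot\,)+(n-1)h(K,\,\cdot\,)$; this operator is invertible on even functions (its kernel, the degree-one spherical harmonics, is excluded by evenness), and inverting it by elliptic regularity together with an interpolation against the a priori bound yields the exponent $1/(n+1)$.

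For the second step it remains to recover the generating measures from the support functions, i.e.\ to invert the cosine transform $T\colon\mu\mapsto h(Z_\mu,\,\cdot\,)$. Because $T$ diagonalises in spherical harmonics with multipliers decaying polynomially in the degree, $T^{-1}$ is unbounded and one can only hope for Hölder inverse stability: interpolating the weak control $\|T(\mu-\nu)\|_\infty=d_H(Z_\mu,Z_\nu)$ against the total-mass bound $\mu(\SS^{n-1}),\nu(\SS^{n-1})\le R$ gives
\begin{equation*}
d_{BL}(\mu,\nu)\le c_2\, d_H(Z_\mu,Z_\nu)^{1/(n+4)},
\end{equation*}
with $c_2=c_2(n,\rho,R)$, along the lines of the inverse cosine transform estimates in \cite{HS2010,K}. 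Multiplying the two displays yields \eqref{eq:GeneralStabilityEstimate} with exponent $\tfrac{1}{2^{m-1}(n+1)}\cdot\tfrac{1}{n+4}=2c(m,n)$, as required.

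The main obstacle is the first step for the lower orders $m<n-1$. Unlike the surface area measure, a lower-order area measure determines a body only among those of dimension at least $m+1$, and the available stability constants degenerate as a body flattens; keeping $c_1$ uniform over $\cK(\rho,R)$ and, above all, tracking the precise exponent through the inductive reduction in order, without losing more than the square root per step, is the delicate part. By comparison the cosine transform inversion is standard, but it still requires care to convert the Hausdorff (sup-norm) control into the bounded Lipschitz distance on $\SS^{n-1}$ with an explicit dimension-dependent exponent.
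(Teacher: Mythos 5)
Your proposal follows essentially the same route as the paper: the estimate is factored through $d_H(Z_\mu,Z_\nu)$ into an inverse stability bound for the cosine transform with exponent $1/(n+4)$ (the paper gets this from \cite[Theorem 5.1]{HS} together with the multiplier estimates of \cite{BoLi}) and a stability bound for the $m$th area measure with exponent $1/(2^{m-1}(n+1))$ (the paper gets this from the mixed-volume inequalities (8.56) and Lemma 8.5.2/Theorem 8.5.4 in \cite{Schneider93}), and the product of the two exponents gives $2c(m,n)$ exactly as you state. The only cosmetic difference is in which external results are invoked for each half; the decomposition, the exponents and the use of the uniform bounds $\rho B^n\subset 2Z_\mu\subset RB^n$ coincide with the paper's argument.
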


\begin{proof} We show that
\begin{equation}\label{stab1}
d_{BL}(\mu,\nu) \le c_1\, d_H(Z_\mu,Z_\nu)^{\alpha_1}
\end{equation}
and
\begin{equation}\label{stab2}
d_H(K,M) \le c_2\, d_{BL}\big(S_m(K,\cdot),S_m(M,\cdot)\big)^{\alpha_2}\,,
\end{equation}
for origin-symmetric convex bodies $K,M\in\mathcal{K}(\rho,R)$, appropriate constants $c_1,c_2>0$ and powers $\alpha_1:=(n+4)^{-1}$ and $\alpha_2:=((n+1)2^{m-1})^{-1}$. The desired inequality \eqref{eq:GeneralStabilityEstimate} then follows immediately with $c=c(n,\rho,R)=c_1c_2^{\alpha_1} $ and $2c(n,m)=\alpha_1\alpha_2$, since $Z_\mu,Z_\nu\in \mathcal{K}(\rho/2,R/2)$.

We first derive inequality \eqref{stab1} from \cite[Theorem 5.1]{HS}. For an easier comparison with that paper, we define for a signed Borel measure $\sigma$ on $\SS^{n-1}$ and a bounded measurable function $\Xi$ on $[-1,1]$, $$T_\Xi[\sigma](u):=\int\limits_{\SS^{n-1}}\Xi(\langle u,v\rangle)\,\sigma(\dint v)\,,\qquad u\in\SS^{n-1}\,.$$
In \cite{HS} an important r\^ole is played by the multipliers $a_{n,k}(\Xi)$ of $T_\Xi$, which are defined by $$a_{n,k}(\Xi):=\o_{n-1}\int\limits_{-1}^1(1-t^2)^{\frac{n-3}{2}}\,P_k^n(t)\,\Xi(t)\,\dint t\,,$$ where $P_k^n$ is the Legendre polynomial of dimension $n$ and degree $k$. Recalling the definition \eqref{support} of the support function of a zonoid $Z_\mu$ with generating measure $\mu$, we see that 
$$d_H(Z_\mu,Z_\nu)=\Big\|\frac{1}{ 2}\,\int\limits_{\SS^{n-1}}|\langle \,\cdot\,,v\rangle|\,\mu(\dint v)-\frac{1}{2}\,\int\limits_{\SS^{n-1}}|\langle \,\cdot\,,v\rangle|\,\nu(\dint v)\Big\|_{\infty}=\big\|T_\Gamma[\tau]\big\|_\infty$$ with $\Gamma(t):=|t|/2$ and $\tau:=\mu-\nu$. This special choice corresponds to the cosine transform treated in \cite{BoLi} and it is known from that paper that the multipliers $a_{n,k}(\Gamma)$ of $T_\Gamma$ satisfy $a_{n,0}(\Gamma)\neq 0$ and the estimate $|a_{n,k}(\Gamma)^{-1}|\leq c_3\,k^{(n+2)/2}$ for any even $k\in\NN$ and a universal constant $c_3>0$ (we have $a_{n,k}(\Gamma)=0$ if $k$ is odd). An application of Theorem 5.1 in \cite{HS} then shows that for each $0<\beta<2/(n+4)$ there is a constant $c_4>0$ only depending on $n$ and $\beta$ such that
$$
\Big|\int\limits_{\SS^{n-1}} f\,\dint\mu-\int\limits_{\SS^{d-1}} f\,\dint\nu\Big|=\Big|\int\limits_{\SS^{n-1}} f\,\dint\tau\Big|\le c_4\,\|f\|_{BL}\big\|T_\Gamma[\tau]\big\|_\infty^{\beta}=c_4\,\|f\|_{BL}\,d_H(Z_\mu,Z_\nu)^{\beta}
$$
for all functions $f:\SS^{n-1}\to\RR$ (the additional factor involving the total variation distance of the 
given measures $\mu$ and $\nu$  in \cite{HS} can be bounded from above by $2R$). Taking the supremum over all functions $f$ with $\|f\|_{BL}\le 1$, we obtain in particular \eqref{stab1} with $\alpha_1=1/(n+4)$ (in fact, we can  choose any number between $0$ and $2/(n+4)$).

To obtain \eqref{stab2}, we first notice that $K\subset RB^n$ implies $\|h(K,\,\cdot\,)\|_{BL}\le 2R$ by 
\cite[Lemma 1.8.12]{Schneider93}. Therefore, using  mixed volumes as introduced in Section \ref{sec:preliminaries}, we get for convex sets $K,M$ and $B^n$ that
\begin{align*}
&\big| V(K[m+1],B^n[n-m-1]) - V(K,M[m],B^n[n-m-1])\big| \cr
& \qquad = \Big| \frac{1}{n}\int\limits_{\SS^{n-1}} h(K,u) \, S_m(K,\dint u)-\frac{1}{ n}\int\limits_{\SS^{d-1}}h(K,u)\,S_m(M,\dint u)\Big| \le   c_5\, \varepsilon
\end{align*}
with a constant $c_5:=c_5(n,R)>0$, depending only on $n$ and $R$, and $\varepsilon := d_{BL}\big(S_m(K,\,\cdot\,),S_m(M,\,\cdot\,)\big)$.
Similarly, one proves the inequality
$$
\big|V(M[m+1],B^n[n-m-1]) - V(M,K[m],B^n[n-m-1])\big|  \le   c_5 \, \varepsilon \,.
$$
These are the inequalities (8.56) in \cite{Schneider93} with $m-1$ there replaced by $m$. We can now follow the proof of Lemma 8.5.2 and Theorem 8.5.4 in \cite{Schneider93} and the literature cited there (see also Theorem 7.2.6 in the first edition of \cite{Schneider93}) to get \eqref{stab2} with $\alpha_2 = 1/((n+1)2^{m-1})$ (again, $m-1$ there has to be replaced by $m$). This completes the proof of \eqref{eq:GeneralStabilityEstimate}.
\end{proof}

The bounded Lipschitz distance used in Theorem \ref{stability} can be replaced by the Prohorov distance. 

\begin{corollary}\label{cor:StabilityProhorov}
Let $m\in\{1,\dots, n-1\}$, and let $\mu,\nu\in\bM_{\rm e}(\rho,R)$ be even Borel measures on $\SS^{n-1}$. Then there is a constant $c:=c(n,\rho,R)>0$, depending only on $n,\rho$ and $R$, such that
\begin{equation}\label{eq:GeneralStabilityEstimateProhorov}
d_{P}(\mu,\nu) \le c\, d_{P}\big(S_m(Z_\mu,\,\cdot\,),S_m(Z_\nu,\,\cdot\,)\big)^{c(m,n)}\,,
\end{equation}
where $c(m,n)=\left(2^{m}(n+1)(n+4)\right)^{-1}$.
\end{corollary}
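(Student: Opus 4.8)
The plan is to concatenate the two-sided comparison of Lemma~\ref{compare} with the bounded Lipschitz estimate of Theorem~\ref{stability}, invoking the comparison once on the side of the generating measures $\mu,\nu$ and once on the side of their area measures $S_m(Z_\mu,\,\cdot\,)$, $S_m(Z_\nu,\,\cdot\,)$. Throughout, the underlying compact metric space is $S=\SS^{n-1}$, and we write $c_0=c_0(n,\rho,R)$ for the constant furnished by Theorem~\ref{stability}.

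First, since $\mu,\nu\in\bM_{\rm e}(\rho,R)\subset\bM(\rho,R)$, the left inequality of Lemma~\ref{compare} supplies a constant $a>0$ depending only on $\rho,R$ such that $d_P(\mu,\nu)\le a^{-1/2}d_{BL}(\mu,\nu)^{1/2}$. Combining this with Theorem~\ref{stability} yields
$$
d_P(\mu,\nu)\le a^{-1/2}c_0^{1/2}\,d_{BL}\big(S_m(Z_\mu,\,\cdot\,),S_m(Z_\nu,\,\cdot\,)\big)^{c(m,n)},
$$
so that halving the exponent $2c(m,n)$ of Theorem~\ref{stability} produces precisely the target exponent $c(m,n)$. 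It then remains to replace, on the right-hand side, the bounded Lipschitz distance of the two area measures by their Prohorov distance, using Lemma~\ref{compare} in its reverse (right-hand) direction.

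The step requiring care is the verification that this reverse direction of Lemma~\ref{compare} is applicable to $S_m(Z_\mu,\,\cdot\,)$ and $S_m(Z_\nu,\,\cdot\,)$, that is, that both belong to a common class $\bM(\rho',R')$ with $0<\rho'\le R'<\infty$ depending only on $n,\rho,R$; this is the main obstacle. As recorded before Theorem~\ref{stability}, membership $\mu,\nu\in\bM_{\rm e}(\rho,R)$ entails $Z_\mu,Z_\nu\in\mathcal K(\rho/2,R/2)$, so that $(\rho/2)B^n\subset Z_\mu,Z_\nu\subset(R/2)B^n$. By the relation \eqref{eq:VmSmRelation}, the total mass $S_m(K,\SS^{n-1})$ is a fixed positive multiple of $V_m(K)$, the multiple depending only on $n$ and $m$, while monotonicity of intrinsic volumes under set inclusion bounds $V_m(K)$ from below by $V_m((\rho/2)B^n)>0$ and from above by $V_m((R/2)B^n)<\infty$ for every $K\in\mathcal K(\rho/2,R/2)$. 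Hence the total masses of $S_m(Z_\mu,\,\cdot\,)$ and $S_m(Z_\nu,\,\cdot\,)$ are pinched between two positive constants, yielding bounds $\rho',R'$ depending only on $n,\rho,R$, and both area measures lie in $\bM(\rho',R')$.

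Granting this, the right inequality of Lemma~\ref{compare} supplies a constant $b>0$, depending only on $\rho',R'$ and hence only on $n,\rho,R$, with $d_{BL}(S_m(Z_\mu,\,\cdot\,),S_m(Z_\nu,\,\cdot\,))\le b\,d_P(S_m(Z_\mu,\,\cdot\,),S_m(Z_\nu,\,\cdot\,))$. Raising this to the power $c(m,n)$ and substituting into the displayed estimate above yields \eqref{eq:GeneralStabilityEstimateProhorov} with final constant $c=a^{-1/2}c_0^{1/2}b^{c(m,n)}$, which depends only on $n,\rho,R$. The remaining work is purely the bookkeeping of checking that every constant entering the chain depends on nothing beyond $n,\rho,R$, which is immediate from the dependencies already isolated.
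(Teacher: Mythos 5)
Your argument is correct and follows essentially the same route as the paper: Lemma \ref{compare} applied on each side of Theorem \ref{stability}, with the only substantive point being the verification that the total masses $S_m(Z_\mu,\SS^{n-1})$ and $S_m(Z_\nu,\SS^{n-1})$ are pinched between positive constants depending only on $n,\rho,R$, which you establish via \eqref{eq:VmSmRelation} and monotonicity exactly as the paper does (the paper phrases this through the mixed volume $V(Z_\mu[m],B^n[n-m])$ rather than $V_m$, but by \eqref{eq:IntVolMixedVolRelation} this is the same computation). No gaps.
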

\begin{proof}
The assertion follows from Theorem \ref{stability} by applying Lemma \ref{compare} twice. For this, it remains to ensure that 
 $S_m(Z_\mu,\SS^{n-1}),S_m(Z_\nu,\SS^{n-1})$ can be bounded from above and from below in terms of $\rho,R$. Since $\mu\in\bM_{\rm e}(\rho,R)$, it follows that $\rho B^n\subset 2Z_\mu\subset RB^n$, and therefore \eqref{eq:IntVolMixedVolRelation}, \eqref{eq:VmSmRelation} and the monotonicity of mixed volumes imply that
 $$n2^{-m}\rho^m\kappa_n\le nV(Z_\mu[m],B^n[n-m])=S_m(Z_\mu,\SS^{n-1})\le n2^{-m} R^m\kappa_n\,.$$
 Lemma \ref{compare} with $\rho$ and $R$ there replaced by $n2^{-m}\rho^m\k_n$ and $n2^{-m}R^m\k_n$, respectively, provides the desired bounds.
\end{proof}



\section{Intersection}\label{sec:Intersection}

In this section, we generalize some results on intersection processes which can be found in the literature, for example in \cite{SW}. Moreover, we prove a uniqueness theorem as well as a stability result. In particular, we relax the usual stationarity condition by assuming weak stationarity only.

\subsection{Intersection processes}\label{subsec:IntersectionProcesses}

For $r\ge 2$ and $i=1,\ldots,r$, let $X_i$ be a weakly stationary process of $k_i$-dimensional flats in $\RR^n$, $1\le k_i\le n-1$, with intensity measure $\Theta_i\neq 0$, and such that $k_1+\cdots +k_r\ge (r-1)n$. Let $\gamma_i>0$ be the corresponding intensity and $\QQ_i$ the directional distribution according to \eqref{eq:Intensity}. By $Y=X_1\cap\cdots\cap X_r$ we denote the {\it intersection process} of $X_1,\ldots,X_r$. It consists of all intersections $E_1\cap\cdots\cap E_r$ of flats $E_1\in X_1,\ldots,E_r\in X_r$ which are in general position, i.e.,
$$Y(\,\cdot\,)=\sum_{E_1\in X_1,\ldots,E_r\in X_r}{\bf 1}\{E_1\cap\ldots\cap E_r\in\,\cdot\,\}\,{\bf 1}\{[E_1,\ldots,E_r]>0\}\,.$$
If there are no such intersections, then $Y$ is the empty process, otherwise $Y$ is a process of $q$-flats with $q :=k_1+\cdots +k_r-(r-1)n$, and we write $\Theta_Y$ for its intensity measure.
In the following, we do not discuss the most general result, but concentrate on two special situations, when the processes $X_1,\ldots, X_r$ are independent and when $X_1=\cdots =X_r=X$ (hence $k_1=\cdots =k_r=k$) and $X$ is of type $(S_r)$, that is,  $\Theta^{(r)}=\Theta^r$. In the latter case, we denote the intersection process of 
order $r$ by $X_{(r)}$. Formally, $X_{(r)}$ is defined as
$$X_{(r)}(\,\cdot\,)=\frac{1}{ r!}\sum_{(E_1,\ldots,E_r)\in X_{\neq}^r}{\bf 1}\{E_1\cap\ldots\cap E_r\in\,\cdot\,\}\,{\bf 1}\{[E_1,\ldots,E_r]>0\}\,.$$

In the first situation, the process $Y$ is weakly stationary. In fact, using the independence of the processes $X_1,\ldots,X_r$ together with the fact that each of the measures $\Theta_i$, $1\leq i\leq r$, is translation invariant we see that, for each $z\in\RR^n$,
\begin{align*}
&\int\limits_{A(n,q)} f(E+z)\,\Theta_Y(\dint E)\\
&= \int\limits_{A(n,k_1)}\ldots\int\limits_{A(n,k_r)} f\big((E_1\cap\ldots\cap E_r)+z\big)\,{\bf 1}\{[E_1,\ldots,E_r]>0\}\,\Theta_r(\dint E_r)\ldots\Theta_1(\dint E_1)\\
&=\int\limits_{A(n,k_1)}\ldots\int\limits_{A(n,k_r)} f\big((E_1+z)\cap\ldots\cap (E_r+z)\big)\,{\bf 1}\{[E_1,\ldots,E_r]>0\}\,\Theta_r(\dint E_r)\ldots\Theta_1(\dint E_1)\\
&=\int\limits_{A(n,k_1)}\ldots\int\limits_{A(n,k_r)} f(E_1\cap\ldots\cap E_r)\,{\bf 1}\{[E_1,\ldots,E_r]>0\}\,\Theta_r(\dint E_r)\ldots\Theta_1(\dint E_1)\\
&=\int\limits_{A(n,q)} f(E)\,\Theta_Y(\dint E)\,,
\end{align*}
for any non-negative measurable function $f:A(n,q)\to\RR$.
Thus, $Y$ is a weakly stationary $q$-flat process and we shall write $\g_Y$ for the intensity and $\QQ_Y$ for the directional distribution of $Y$, recall \eqref{eq:GammaQQ}. 

\begin{theorem}\label{thm:IntersectionK1Kr} Let $k_1,\ldots ,k_r\in\{1,\ldots ,n-1\}$ be such that $k_1+\ldots+k_r\geq (r-1)n$ and put $q:=k_1+\cdots +k_r-(r-1)n$. For $i=1,\ldots,r$, let $X_i$ be a weakly stationary $k_i$-flat process with intensity $\gamma_i$ and directional distribution $\QQ_i$, and assume that $X_1,\ldots,X_r$ are independent.
Then, for all Borel sets $A\subset G(n,q)$,
$$\g_{Y}\,\QQ_{Y}(A)={\g_1\cdots\g_r}\,\int\limits_{G(n,k_1)}\cdots\int\limits_{G(n,k_r)}{\bf 1}\left\{L_1\cap\ldots\cap L_r\in A\right\}\,[L_1,\ldots,L_r]\,\QQ_r(\dint L_r)\ldots\QQ_1(\dint L_1)\,.$$
\end{theorem}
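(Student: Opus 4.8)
The plan is to compute the intensity measure $\Theta_Y$ directly via the Campbell-type formula implicit in the definition of $Y$, and then extract $\g_Y\QQ_Y$ using the characterization \eqref{eq:GammaQQ}. Since $X_1,\ldots,X_r$ are independent, the factorial moment structure factorizes, so
\begin{align*}
\int\limits_{A(n,q)} f(E)\,\Theta_Y(\dint E)
= \int\limits_{A(n,k_1)}\!\!\cdots\!\!\int\limits_{A(n,k_r)} f(E_1\cap\cdots\cap E_r)\,\ind\{[E_1,\ldots,E_r]>0\}\,\Theta_r(\dint E_r)\cdots\Theta_1(\dint E_1)
\end{align*}
for non-negative measurable $f$ on $A(n,q)$. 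First I would insert the disintegration \eqref{eq:Intensity} for each $\Theta_i$, turning each integral over $A(n,k_i)$ into an integral over $G(n,k_i)$ (against $\g_i\QQ_i$) followed by an integral over the orthogonal complement $L_i^\perp$. This produces a constant $\g_1\cdots\g_r$ out front together with an $r$-fold Grassmannian integral and an inner integral over $L_1^\perp\times\cdots\times L_r^\perp$ of the quantity $f\big((L_1+x_1)\cap\cdots\cap(L_r+x_r)\big)$.

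The main work is then to evaluate, for fixed directions $L_1,\ldots,L_r$ in general position, the inner translation integral
$$
I(L_1,\ldots,L_r):=\int\limits_{L_1^\perp}\!\!\cdots\!\!\int\limits_{L_r^\perp} f\big((L_1+x_1)\cap\cdots\cap(L_r+x_r)\big)\,\ell_{L_r^\perp}(\dint x_r)\cdots\ell_{L_1^\perp}(\dint x_1)\,.
$$
When the $L_i$ are in general position the intersection $(L_1+x_1)\cap\cdots\cap(L_r+x_r)$ is a translate $L_0+y$ of the fixed $q$-dimensional subspace $L_0:=L_1\cap\cdots\cap L_r$, and I would reparametrize the domain $L_1^\perp\times\cdots\times L_r^\perp$ so that the new variables split into the direction $y\in L_0^\perp$ locating the intersection flat and the remaining directions that do not affect it, integrating the latter out. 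The Jacobian of this linear change of variables is exactly the subspace determinant $[L_1,\ldots,L_r]$ — this is the key geometric identity, essentially the statement that $L_1^\perp,\ldots,L_r^\perp$ are linearly independent with $L_1^\perp+\cdots+L_r^\perp=L_0^\perp$ and that the product of the normal-space Lebesgue measures pushes forward to $[L_1,\ldots,L_r]$ times $\ell_{L_0^\perp}$. Hence $I(L_1,\ldots,L_r)=[L_1,\ldots,L_r]\int_{L_0^\perp} f(L_0+y)\,\ell_{L_0^\perp}(\dint y)$.

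Substituting back yields
$$
\int\limits_{A(n,q)}\!\! f\,\dint\Theta_Y=\g_1\cdots\g_r\!\int\limits_{G(n,k_1)}\!\!\cdots\!\!\int\limits_{G(n,k_r)}\!\![L_1,\ldots,L_r]\!\int\limits_{L_0^\perp}\!\! f(L_0+y)\,\ell_{L_0^\perp}(\dint y)\,\QQ_r(\dint L_r)\cdots\QQ_1(\dint L_1)\,,
$$
with $L_0=L_1\cap\cdots\cap L_r$. Comparing this with the canonical form \eqref{eq:Intensity} for the weakly stationary $q$-flat process $Y$ identifies the directional part: the inner integral $\int_{L_0^\perp} f(L_0+y)\,\ell_{L_0^\perp}(\dint y)$ is precisely the kernel appearing in \eqref{eq:Intensity} for direction $L_0$, so the outer Grassmannian integral exhibits $\g_Y\QQ_Y$ as the image, under $(L_1,\ldots,L_r)\mapsto L_1\cap\cdots\cap L_r$, of the measure $\g_1\cdots\g_r\,[L_1,\ldots,L_r]\,\QQ_r\otimes\cdots\otimes\QQ_1$. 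Testing against $f(L_0+y)=g(L_0)\ind\{y\in B^{n-q}\cap L_0^\perp\}$ (or invoking \eqref{eq:GammaQQ}) and noting the $\k_{n-q}$ factors cancel then gives the claimed formula for $\g_Y\QQ_Y(A)$. The only genuinely delicate point is the Jacobian computation producing $[L_1,\ldots,L_r]$; I would handle it by choosing adapted orthonormal bases and reducing to the two-factor case, or simply cite the subspace-determinant machinery of \cite[Section 14.1]{SW}, after which everything else is a routine application of Fubini and the definitions.
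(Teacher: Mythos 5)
Your proposal is correct and follows essentially the same route as the paper: disintegrate each $\Theta_i$ via \eqref{eq:Intensity}, reduce to the translation integral over $L_1^\perp\times\cdots\times L_r^\perp$, and identify the subspace determinant $[L_1,\ldots,L_r]$ as the density of the pushforward onto $\ell_{(L_1\cap\cdots\cap L_r)^\perp}$ — the paper states exactly this fact in the form $\int_{L_1^\perp}\cdots\int_{L_r^\perp}{\bf 1}\{(L_1+x_1)\cap\cdots\cap(L_r+x_r)\cap B^n\neq\emptyset\}\,\ell_{L_r^\perp}(\dint x_r)\cdots\ell_{L_1^\perp}(\dint x_1)=\k_{rn-k_1-\cdots-k_r}[L_1,\ldots,L_r]$, citing the translative Crofton formula rather than a direct Jacobian computation. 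The only cosmetic difference is that the paper tests the intensity measure immediately against the ball-hitting functional from \eqref{eq:GammaQQ} instead of working with a general $f$ first.
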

Notice that the formula in Theorem \ref{thm:IntersectionK1Kr} is still valid if one of the $\g_i$'s is zero. In this case, the left- and the right-hand side are both equal to zero. 
\begin{proof}[Proof of Theorem \ref{thm:IntersectionK1Kr}]
Using the definition \eqref{eq:Intensity} of the measures $\Theta_1,\ldots,\Theta_r$, relations \eqref{eq:GammaQQ} and the independence of $X_1,\ldots,X_r$,
we obtain that $\k_{n-q}\g_{Y}\,\QQ_{Y}(A) $ equals
\begin{equation*}
\begin{split}
&\EE\sum_{E_1\in X_1,\ldots,E_r\in X_r}{\bf 1}\left\{E_1\cap\ldots\cap E_r\cap B^n\neq\emptyset,\,L(E_1)\cap\ldots\cap L(E_r)\in A\,,[E_1,\ldots,E_r]>0\right\}\\
&=\int\limits_{A(n,k_1)}\cdots\int\limits_{A(n,k_r)}{\bf 1}\left\{E_1\cap\ldots\cap E_r\cap B^n\neq\emptyset,\,L(E_1)\cap\ldots\cap L(E_r)\in A\right\}\\
&\hspace{7cm}\times{\bf 1}\{[E_1,\ldots,E_r]>0\}\,\Theta_r(\dint E_r)\ldots\Theta_1(\dint E_1)\\
&=\g_1\cdots\g_r\int\limits_{G(n,k_1)}\cdots\int\limits_{G(n,k_r)}\int\limits_{L_1^\perp}\cdots\int\limits_{L_r^\perp}{\bf 1}\left\{(L_1+x_1)\cap\ldots\cap(L_r+x_r)\cap B^n\neq\emptyset,\,L_1\cap\ldots\cap L_r\in A\right\}\\
&\hspace{3.5cm}\times{\bf 1}\{[L_1,\ldots,L_r]>0\}\,\ell_{L_r^\perp}(\dint x_r)\ldots\ell_{L_1^\perp}(\dint x_1)\,\QQ_r(\dint L_r)\ldots\QQ_1(\dint L_1)\,.
\end{split}
\end{equation*}
The result now follows from the identity
\begin{align*}
&\int\limits_{L_1^\perp}\cdots\int\limits_{L_r^\perp}{\bf 1}\left\{(L_1+x_1)\cap\ldots\cap(L_r+x_r)\cap B^n\neq\emptyset\right\}\,\ell_{L_r^\perp}(\dint x_r)\ldots\,\ell_{L_1^\perp}(\dint x_1)\\
&\qquad= \k_{rn-k_1-\ldots -k_r}\,[L_1,\ldots,L_r]\,,
\end{align*}
which holds for subspaces $L_1,\ldots,L_r$ satisfying $[L_1,\ldots,L_r]>0$ and which can be proved directly, but is also obtained from an iteration of the translative Crofton formula for mixed measures and a reduction formula for balls (Theorem 5.1 and Formula (7.1) in \cite{W01}).
\end{proof}

Coming now to the second situation, we consider a weakly stationary $k$-flat process $X$ in $\RR^n$ with $n/2\le k\le n-1$ and $r\ge 2$ such that $rk\geq (r-1)n$. In the following, we assume that $X$ is of type $(S_r)$.  The $r$th intersection process $X_{(r)}$ of $X$ is defined as the process of $(rk-(r-1)n)$-dimensional flats in $\RR^n$ which arise as intersections of flats $E_1,\ldots,E_r\in X$ in general position. Similarly as above, one shows that $X_{(r)}$ is weakly stationary. Let $\g_{(r)}$ be the intensity of $X_{(r)}$ and $\QQ_{(r)}$ its directional distribution. The next result is a joint generalization of Theorems 4.4.8 and 4.4.9 in \cite{SW}.

\begin{theorem}\label{thm:IntersectionDensity}
Let $n/2\le k\le n-1$ and $rk\geq (r-1)n$. Let $X$ be a weakly stationary $k$-flat process of type $(S_r)$ with intensity $\g$ and directional distribution $\QQ$. Then, for all Borel sets $A\subset G(n,rk-(r-1)n)$, 
\begin{equation}\label{eq:FormulaIntersecionDensity}
\g_{(r)}\,\QQ_{(r)}(A)=\frac{\g^r}{ r!}\int\limits_{G(n,k)}\cdots\int\limits_{G(n,k)}{\bf 1}\left\{L_1\cap\ldots\cap L_r\in A\right\}\,[L_1,\ldots,L_r]\,\QQ(\dint L_r)\ldots\QQ(\dint L_1)\,.
\end{equation}
\end{theorem}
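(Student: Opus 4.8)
The plan is to reduce the statement to Theorem \ref{thm:IntersectionK1Kr} by exploiting the defining property of type $(S_r)$. First I would write down the intensity measure $\Theta_{X_{(r)}}=\EE X_{(r)}$ of the $r$th intersection process. From the definition of $X_{(r)}$ given just above the theorem, together with the definition \eqref{eq:Campbell} of the $r$th factorial moment measure $\Theta^{(r)}$, we have, for every Borel set $B\subset A(n,q)$ with $q:=rk-(r-1)n$,
\begin{equation*}
\Theta_{X_{(r)}}(B)=\frac{1}{r!}\int\limits_{A(n,k)^r}{\bf 1}\{E_1\cap\ldots\cap E_r\in B\}\,{\bf 1}\{[E_1,\ldots,E_r]>0\}\,\Theta^{(r)}(\dint(E_1,\ldots,E_r))\,.
\end{equation*}
Since $X$ is assumed to be of type $(S_r)$, the factorial moment measure coincides with the product measure, $\Theta^{(r)}=\Theta^r$, so the right-hand side becomes an $r$-fold iterated integral with respect to the common intensity measure $\Theta$.

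At this point the computation is literally the one carried out in the proof of Theorem \ref{thm:IntersectionK1Kr}, specialised to $X_1=\cdots=X_r=X$, hence $k_1=\cdots=k_r=k$, $\g_1=\cdots=\g_r=\g$ and $\QQ_1=\cdots=\QQ_r=\QQ$, the only difference being the extra prefactor $1/r!$. Concretely, testing against the set $\{E:E\cap B^n\neq\emptyset,\,L(E)\in A\}$ and recalling \eqref{eq:GammaQQ}, I would rewrite $\k_{n-q}\,\g_{(r)}\,\QQ_{(r)}(A)$ as such an $r$-fold integral over $A(n,k)$, then invoke \eqref{eq:Intensity} to decompose each copy of $\Theta$ into an integral over $G(n,k)$ and a Lebesgue integral over the corresponding orthogonal complement. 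The inner Lebesgue integrals are evaluated by the same reduction identity used there,
\begin{equation*}
\int\limits_{L_1^\perp}\cdots\int\limits_{L_r^\perp}{\bf 1}\{(L_1+x_1)\cap\ldots\cap(L_r+x_r)\cap B^n\neq\emptyset\}\,\ell_{L_r^\perp}(\dint x_r)\cdots\ell_{L_1^\perp}(\dint x_1)=\k_{rn-rk}\,[L_1,\ldots,L_r]\,,
\end{equation*}
and since $rn-rk=n-q$, the factor $\k_{n-q}$ cancels on both sides. This yields \eqref{eq:FormulaIntersecionDensity} directly.

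The only genuinely new ingredient compared with Theorem \ref{thm:IntersectionK1Kr} is the replacement $\Theta^{(r)}=\Theta^r$, and this is precisely where the $(S_r)$ hypothesis enters: without it, the factorial moment measure of a single process need not factor, and the right-hand side could no longer be expressed through $\g$ and $\QQ$ alone. I therefore expect the main (indeed essentially the only) subtlety to be bookkeeping, namely keeping the combinatorial factor $1/r!$ consistent between the unordered definition of $X_{(r)}$ and the ordered sum in \eqref{eq:Campbell}, and checking the dimension count $q=rk-(r-1)n\ge 0$, which is guaranteed by $rk\ge(r-1)n$, so that the subspace determinant and the intersection dimension behave as in the general-position discussion of Section \ref{sec:preliminaries}.
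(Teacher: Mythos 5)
Your proposal is correct and follows essentially the same route as the paper: the paper likewise expresses $r!\,\k_{r(n-k)}\,\g_{(r)}\,\QQ_{(r)}(A)$ as the expected sum over distinct $r$-tuples via \eqref{eq:Campbell}, invokes $\Theta^{(r)}=\Theta^r$ from the $(S_r)$ hypothesis, decomposes each copy of $\Theta$ by \eqref{eq:Intensity}, and concludes with the same reduction identity as in Theorem \ref{thm:IntersectionK1Kr}, noting $r(n-k)=n-q$. The bookkeeping points you flag (the $1/r!$ and the dimension count) are exactly the ones the paper handles.
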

\begin{proof} Again by \eqref{eq:Intensity}, \eqref{eq:GammaQQ}, the definition of $X_{(r)}$ and \eqref{eq:Campbell}, and since $n-(rk-(r-1)n)=r(n-k)$, we obtain that $r!\k_{r(n-k)}\,\g_{(r)}\,\QQ_{(r)}(A)$ equals
\begin{equation*}
\begin{split}
&\EE\sum_{(E_1,\ldots,E_r)\in X^r_{\not=}}{\bf 1}\left\{E_1\cap\ldots\cap E_r\cap B^n\neq\emptyset,\,L(E_1)\cap\ldots\cap L(E_r)\in A\,,[E_1,\ldots,E_r]>0\right\}\\
&=\int\limits_{A(n,k)}\cdots\int\limits_{A(n,k)}{\bf 1}\left\{E_1\cap\ldots\cap E_r\cap B^n\neq\emptyset,\,L(E_1)\cap\ldots\cap L(E_r)\in A\right\}\\
&\hspace{7cm}\times{\bf 1}\{[E_1,\ldots,E_r]>0\}\,\Theta(\dint E_r)\ldots\Theta(\dint E_1)\\
& =\g^r\int\limits_{G(n,k)}\cdots\int\limits_{G(n,k)}\int\limits_{L_1^\perp}\cdots\int\limits_{L_r^\perp}{\bf 1}\left\{(L_1+x_1)\cap\ldots\cap(L_r+x_r)\cap B^n\neq\emptyset,\,L_1\cap\ldots\cap L_r\in A\right\}\\ &\hspace{3.5cm}\times{\bf 1}\{[L_1,\ldots,L_r]>0\}\,\ell_{L_r^\perp}(\dint x_r)\ldots\ell_{L_1^\perp}(\dint x_1)\,\QQ(\dint L_r)\ldots\QQ(\dint L_1)\,.
\end{split}
\end{equation*}
Now the result follows as in the proof of Theorem \ref{thm:IntersectionK1Kr}.
\end{proof}

\subsection{Uniqueness problems and stability estimates}\label{sec:UniquenessHyperplanes}

In this section, we consider the problem whether, for a weakly stationary $k$-flat process $X$ of type $(S_r)$, the directional distribution $\QQ$ of $X$ is determined by the directional distribution $\QQ_{(r)}$ of the $r$-th intersection process $X_{(r)}$ or whether the intensity measure $\Theta_{(r)}$ of $X_{(r)}$ determines the intensity measure $\Theta$ of $X$. 

Let us begin with the case $k=1$, which leads to a non-trivial intersection process if and only if $n=r=2$. However, the intersection process $X_{(2)}$ then is an ordinary point process in the plane and its intensity $\g_{(2)}$ does not allow to determine the directional distribution $\QQ$ of $X$. For example, any rotation of $\QQ$ leads to a process $X'$ with the same intersection intensity. More generally, let $n$ be even and $k=n/2$. Let $U_1,U_2\in G(n,k)$ be in general position, that is, $U_1\cap U_2=\{0\}$. Further, let $\alpha\in (0,1)$ and 
$\QQ:=\alpha\delta_{U_1}+(1-\alpha)\delta_{U_2}$. If $X$ is a Poisson process of $k$-flats with intensity $\gamma$ and direction 
distribution $\QQ$, then $\gamma_{(2)}=\gamma^2\alpha(1-\alpha)[U_1,U_2]$. Hence there is a continuum of choices of these parameters leading to the same intersection density $\gamma_{(2)}$.

We next consider an intermediate regime for the dimension parameter $k$. It covers for example the case $n=5$ and $k=3$, but not $n=6$ and $k=4$. 
The following theorem shows that a uniqueness result does not hold in a certain range of values of $k$, even for Poisson processes, but the exact conditions on $k$ under which a non-uniqueness result holds remain open. Recall that a weakly stationary Poisson process is automatically stationary.

\begin{theorem}\label{thmuniquedirect1}
Let $k\in\{2,\ldots,n-2\}$ be such that $\frac{n}{2}\leq k< \frac{n}{2}+\left\lfloor \frac{n-k}{2}\right\rfloor $. Then there exist two stationary Poisson processes $X_1$ and $X_2$ of $k$-flats with the same intensity $\g>0$, but different directional distributions $\QQ_1$ and $\QQ_2$, which are absolutely continuous with respect to the Haar probability measure on $G(n,k)$ and for which the associated second intersection processes have the same intensity measure.
\end{theorem}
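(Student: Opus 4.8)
The plan is to exploit the explicit formula \eqref{eq:FormulaIntersecionDensity} from Theorem~\ref{thm:IntersectionDensity}. For two stationary Poisson $k$-flat processes with the same intensity $\gamma$, the intensity measures of their second intersection processes agree if and only if
$$
\int_{G(n,k)}\int_{G(n,k)}\mathbf{1}\{L_1\cap L_2\in A\}\,[L_1,L_2]\,\QQ_1(\dint L_2)\,\QQ_1(\dint L_1)
=\int_{G(n,k)}\int_{G(n,k)}\mathbf{1}\{L_1\cap L_2\in A\}\,[L_1,L_2]\,\QQ_2(\dint L_2)\,\QQ_2(\dint L_1)
$$
for all Borel sets $A\subset G(n,2k-n)$. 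So the task reduces to a purely geometric non-injectivity statement for the quadratic integral transform $\QQ\mapsto \Theta_{(2)}[\QQ]$ acting on directional distributions. The strategy is to write $\QQ_2=\QQ_1+\Delta$ for a suitable signed measure $\Delta$ with $\Delta(G(n,k))=0$ so that both remain probability measures (after requiring $\QQ_2\ge 0$), and to arrange that the bilinear form vanishes on $\Delta$ in the right sense.

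First I would linearize: since the transform is quadratic in $\QQ$, the difference of the two intersection intensity measures is
$$
B(\QQ_1,\Delta)+B(\QQ_1,\Delta)+B(\Delta,\Delta),
$$
where $B(\cdot,\cdot)$ denotes the obvious symmetric bilinear functional. The cleanest route is to choose $\Delta$ so that the \emph{pushed-forward} object $L_1\cap L_2$ interacts with $\Delta$ trivially, which suggests looking for a perturbation $\Delta$ supported on directions whose pairwise intersections with the support of $\QQ_1$ are controlled. A natural construction uses an absolutely continuous density: take $\QQ_1$ with density $1+g$ and $\QQ_2$ with density $1+\tilde g$ relative to $\nu_k$, where $g,\tilde g$ are small and orthogonal (in $L^2(\nu_k)$) to an appropriate finite-dimensional space of functions built from the transform. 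The dimension condition $\frac{n}{2}\le k<\frac{n}{2}+\lfloor\frac{n-k}{2}\rfloor$ is exactly what should guarantee that the relevant function space is large enough to contain two distinct admissible densities producing the same image — this inequality governs the dimension $2k-n$ of the intersection flats relative to $k$ and hence the size of the kernel of the transform.

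The most direct concrete approach, and the one I would attempt first, is to mimic the two-atom example given just before the theorem but lift it so that the intersection \emph{direction} is fully determined while leaving a free parameter. Specifically, I would try placing the mass of $\QQ_1,\QQ_2$ near a configuration where all pairwise intersections $L_i\cap L_j$ land in a single fixed subspace $M_0\in G(n,2k-n)$, up to a rotation that fixes $M_0$ pointwise; under such a construction the map $A\mapsto\Theta_{(2)}(A)$ collapses to a one-dimensional quantity (the total intersection density times $\delta_{M_0}$), which can be matched by a continuum of parameter choices. To make the directional distributions \emph{absolutely continuous} rather than atomic, I would then smear each atom over a small orbit of rotations in the stabilizer of $M_0$ and average, using that averaging over a compact group acting trivially on the intersection direction preserves the collapsed image measure. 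The inequality on $k$ is what ensures the stabilizer of $M_0$ that fixes the intersection direction is rich enough (of positive dimension, roughly $\lfloor(n-k)/2\rfloor$ worth of independent rotational freedom) to produce genuinely different absolutely continuous densities with identical second intersection measure.

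\textbf{The main obstacle} I anticipate is keeping the full intensity \emph{measure} $\Theta_{(2)}$ (not merely the scalar intensity $\gamma_{(2)}$) invariant: one must match the measure over \emph{all} Borel sets $A\subset G(n,2k-n)$, i.e.\ match the entire directional distribution $\QQ_{(2)}$ together with the intensity, not just a single number as in the two-atom planar example. This is precisely why a rotation of $\QQ$ (which changes $\QQ_{(2)}$) does not suffice here, and why the construction must force all relevant intersections into a controlled family of directions while still varying $\QQ$. Verifying that the smeared, absolutely continuous versions yield \emph{identical} $\Theta_{(2)}$ and remain genuinely distinct will require a careful bookkeeping of the joint distribution of $(L_1\cap L_2,[L_1,L_2])$ under the two laws; I expect this invariance to follow from an equivariance argument under the subgroup of $SO(n)$ fixing $M_0$, but confirming that the weight $[L_1,L_2]$ transforms compatibly is the delicate point.
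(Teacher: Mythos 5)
Your reduction is correct: with equal intensities, the two intersection processes have the same intensity measure if and only if the quadratic forms $\QQ\mapsto\int\int g(L\cap M)[L,M]\,\QQ(\dint L)\,\QQ(\dint M)$ agree for all continuous $g$, and the linearization $\QQ_2=\QQ_1+\Delta$ with the goal of making $\Delta$ lie in the radical of the bilinear form is exactly the right frame. The paper does precisely this: it produces a non-zero signed measure $\Delta$ with a density such that $\int g(L\cap M)[L,M]\,\Delta(\dint L)=0$ for \emph{all} $M\in G(n,k)$ and all continuous $g$, writes $\Delta=\QQ_1-\QQ_2$ (normalized so both parts are probability measures), and then applies this identity twice with Fubini to kill both the cross term and the diagonal term. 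The existence of such a $\Delta$ is the entire content of the theorem, and it is \emph{not} constructed by hand: it is the non-injectivity theorem of Goodey, Howard and Reeder for the induced flag transform, where the hypothesis $\frac{n}{2}\le k<\frac{n}{2}+\lfloor\frac{n-k}{2}\rfloor$ is exactly the rank condition $0\le 2k-n<2\lfloor\frac{n-k}{2}\rfloor$ (the rank of $G(n,k)$ being $n-k$) under which the transform has a non-trivial kernel on absolutely continuous measures. This is a representation-theoretic fact about harmonic analysis on Grassmannians; your dimension-counting heuristic about the stabilizer of $M_0$ does not recover it and does not explain the appearance of the floor function.

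The concrete construction you propose in its place has a fatal flaw. Supporting $\QQ_1,\QQ_2$ on $k$-subspaces all containing a fixed $M_0\in G(n,2k-n)$ does collapse the image measure to a multiple of $\delta_{M_0}$, but that support is a proper submanifold of $G(n,k)$ of Haar measure zero, and smearing over orbits of the stabilizer of $M_0$ keeps you inside that same null submanifold (any rotation fixing $M_0$ maps a subspace containing $M_0$ to another subspace containing $M_0$). So the smeared measures are never absolutely continuous with respect to $\nu_k$, which the theorem requires. If instead you smear over a full-dimensional neighbourhood in $G(n,k)$ to gain absolute continuity, the intersections $L_1\cap L_2$ spread over a neighbourhood of $M_0$ in $G(n,2k-n)$ and you are back to having to match an entire measure rather than a scalar --- the very difficulty you correctly flagged as the main obstacle, and which your construction does not resolve. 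As written, the proposal identifies the right target but supplies neither a working construction nor the citation that replaces it.
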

\begin{proof}
In view of Theorem \ref{thm:IntersectionDensity} the assertion follows once we have shown that there are two different probability measures $\QQ_1$ and $\QQ_2$ on $G(n,k)$ which are absolutely continuous with respect to the Haar probability measure on $G(n,k)$ and such that
\begin{equation}\label{nonunique0}
\int\limits_{G(n,k)}\int\limits_{G(n,k)}  g(L\cap M)\,[L,M]\,\QQ_1(\dint L)\, \QQ_1(\dint M)
=\int\limits_{G(n,k)}\int\limits_{G(n,k)} g(L\cap M)\,[L,M]\, \QQ_2(\dint L)\, \QQ_2(\dint M)
\end{equation}
for all continuous functions $g:G(n,2k-n)\to\RR$. In order to prove the existence of two such measures, we apply the main 
result in \cite{GHR96}, as well as the equivalence preceding it, with $j=2k-n$. We notice that the conditions given there for $j$ are satisfied. Namely, since $k\ge n/2$, we have $j\ge 0$. Also, since $k< \left\lfloor \frac{n-k}{2}\right\rfloor +\frac{n}{2}$, we have $j<2\left\lfloor \frac{r}{2}\right\rfloor$, since the rank $r$ in the sense of \cite{GHR96} of $G(n,k)$ is $n-k$ (here we use $k\ge n/2$ again).  Thus, we obtain the existence of a non-zero finite signed measure $\QQ$ on $G(n,k)$, which is absolutely continuous with respect to the Haar probability measure on $G(n,k)$, such that 
$$
\int\limits_{G(n,k)}g(L\cap M)\,[L,M]\, \QQ(\dint L)=0
$$
for all continuous functions $g$ on $G(n,2k-n)$ and all $M\in G(n,k)$. 
Since $0\neq \QQ=\QQ_1-\QQ_2$ with two non-negative finite measures $\QQ_1\neq \QQ_2$ on $G(n,k)$, which are absolutely continuous with respect to the Haar probability measure on $G(n,k)$, we deduce that
\begin{equation}\label{eqeq}
\int\limits_{G(n,k)}g(L\cap M)\,[L,M]\, \QQ_1(\dint L)=
\int\limits_{G(n,k)}g(L\cap M)\,[L,M]\, \QQ_2(\dint L)
\end{equation}
for all continuous functions $g$ on $G(n,2k-n)$ and all $M\in G(n,k)$. Here we may and will assume that $\QQ_1$ is a probability  measure. Choosing $g=1$ and integrating over $M$ with the Haar measure $\nu_k$, we see that $\QQ_2$ is then a probability measure, too. 
Applying \eqref{eqeq} twice and using Fubini's theorem, we get
\begin{align*}
\int\limits_{G(n,k)}\int\limits_{G(n,k)} &g(L\cap M)\,[L,M]\, \QQ_1(\dint L)\, \QQ_1(\dint M)\\ &=\int\limits_{G(n,k)}\int\limits_{G(n,k)}g(L\cap M)\,[L,M]\, \QQ_2(\dint L)\, \QQ_1(\dint M)\\
&=\int\limits_{G(n,k)}\int\limits_{G(n,k)}g(L\cap M)\,[L,M]\, \QQ_1(\dint M)\, \QQ_2(\dint L)\\
&=\int\limits_{G(n,k)}\int\limits_{G(n,k)}g(L\cap M)\,[L,M]\, \QQ_2(\dint M)\, \QQ_2(\dint L)\\
&=\int\limits_{G(n,k)}\int\limits_{G(n,k)}g(L\cap M)\,[L,M]\, \QQ_2(\dint L)\, \QQ_2(\dint M),
\end{align*}
for all continuous functions $g$ on $G(n,2k-n)$, as required.
\end{proof}

Let us now turn to hyperplane processes, that is, $k$-flat processes with $k=n-1$. In this case we obtain a uniqueness result which has not been stated in the literature so far, dealing with higher-order intersection processes. Thus, let $X$ be a weakly stationary hyperplane process in $\RR^n$ with intensity $\g>0$ and associated even spherical directional distribution $\QQ$ (recall the discussion related to \eqref{symhat}).

The intensity measure $\Theta$ of $X$ is then given by
\begin{equation}\label{eq:IntensityMeasureHyperplanes}
\int\limits_{A(n,n-1)} f(H)\,\Theta(\dint H)=\gamma\int\limits_{\SS^{n-1}}\int\limits_{\RR} f(u^\perp+tu)\,\dint t\,\QQ(\dint u)\,,
\end{equation}
where $f$ is a non-negative measurable function on $A(n,n-1)$. 
We shall assume that $\QQ$ is {\it non-degenerate} in the sense that it is  not concentrated on a great-subsphere of $\SS^{n-1}$.  For fixed  $r\in\{2,\ldots,n\}$,  we consider the $r$-th intersection process $X_{(r)}$ of $X$, which is a weakly stationary process of $(n-r)$-flats in $\RR^n$. We also assume that the hyperplane process $X$ is of type $(S_r)$, hence its $r$-th factorial moment measure  $\Theta^{(r)}$ satisfies $\Theta^{(r)}=\Theta^r$. In this case, Theorem \ref{thm:IntersectionDensity} shows that the intensity $\gamma_{(r)}$ and the directional distribution $\QQ_{(r)}$ of $X_{(r)}$ can be expressed in the form
\begin{equation}\label{eq:QkHyperplane}
\gamma_{(r)}\,\QQ_{(r)}(A)=\frac{\gamma^r}{r!}\int\limits_{\SS^{n-1}}\ldots \int\limits_{\SS^{n-1}}\mathbf{1}\big\{
u_1^\perp\cap\ldots\cap u_r^\perp\in A\big\}\,\nabla_r(u_1,\ldots,u_r)\, \QQ(\dint u_r)\ldots \QQ(\dint u_1)\,,
\end{equation}
where $A\subset G(n,n-r)$ is a Borel set and  $\nabla_r$ has been defined in Section \ref{sec:preliminaries}. We now show that $\QQ_{(r)}$ uniquely determines $\QQ$. 

\begin{theorem}\label{thmuniquedirect2}
Let $r\in\{2,\ldots,n-1\}$, and let $X$ be a weakly stationary hyperplane process in $\RR^n$ of type $(S_r)$ with intensity $\gamma>0$ and non-degenerate spherical directional distribution $\QQ$. Then the intensity measure $\Theta_{(r)}$ of $X_{(r)}$ uniquely determines the intensity measure $\Theta$ of $X$. In particular, $\Theta_{(r)}$ is rotation invariant if and only if $\Theta$ is rotation invariant. Moreover, the directional distribution $\QQ_{(r)}$ of the $r$-th intersection process $X_{(r)}$ uniquely determines $\QQ$.  
\end{theorem}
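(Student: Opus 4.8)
The plan is to reduce the statement to the recovery of the generating measure $\gamma\QQ$ of the associated zonoid $Z_{\gamma\QQ}$ from one of its area measures, and then to invoke the injectivity of the cosine transform. The guiding observation is that \eqref{eq:QkHyperplane} exhibits $\gamma_{(r)}\QQ_{(r)}$ as the image, under the map $(u_1,\ldots,u_r)\mapsto u_1^\perp\cap\cdots\cap u_r^\perp$, of the measure $\frac{\gamma^r}{r!}\nabla_r(u_1,\ldots,u_r)\,\QQ(\dint u_1)\cdots\QQ(\dint u_r)$ on $(\SS^{n-1})^r$. For $r=n-1$ this is, up to a positive constant, precisely the surface area measure $S_{n-1}(Z_{\gamma\QQ},\,\cdot\,)$ written through its facet normals. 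For a general $r\in\{2,\ldots,n-1\}$ I would first \emph{complete the intersection to top order}: starting from $\Theta_{(r)}$, which determines $\gamma_{(r)}\QQ_{(r)}$ by \eqref{eq:GammaQQ}, I intersect the resulting $(n-r)$-flat directions with $n-1-r$ further, independent \emph{isotropic} hyperplane processes (directional distribution $\nu_{n-1}$, intensity $1$). Because the directional distribution of the added hyperplanes is the fixed Haar measure, this is a deterministic operation on $\QQ_{(r)}$ alone, governed by the integral-geometric formula of Theorem \ref{thm:IntersectionK1Kr}; no knowledge of $\QQ$ is needed.

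The second step is to identify the completed measure. Substituting \eqref{eq:QkHyperplane} into the formula of Theorem \ref{thm:IntersectionK1Kr}, the resulting measure $\Psi$ on $G(n,1)\cong\SS^{n-1}$ is an integral over $u_1,\ldots,u_r\sim\QQ$ and the unit normals $w_1,\ldots,w_{n-1-r}$ of the isotropic hyperplanes, which are uniformly distributed on $\SS^{n-1}$. The two determinant weights combine multiplicatively: writing $u_1^\perp\cap\cdots\cap u_r^\perp=\mathrm{span}(u_1,\ldots,u_r)^\perp$ and computing the subspace determinant directly yields
\[
\nabla_r(u_1,\ldots,u_r)\,[\,u_1^\perp\cap\cdots\cap u_r^\perp,\,H_1,\ldots,H_{n-1-r}\,]=\nabla_{n-1}(u_1,\ldots,u_r,w_1,\ldots,w_{n-1-r}),
\]
where $w_i$ is a normal of $H_i$. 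Moreover the direction of the resulting line equals $\mathrm{span}(u_1,\ldots,u_r,w_1,\ldots,w_{n-1-r})^\perp$, i.e.\ the unit normal of the $(n-1)$-dimensional span of all the chosen vectors. Comparing with the integral representation of mixed area measures of zonoids and recalling that the uniform measure on $\SS^{n-1}$ generates a ball, one sees that $\Psi$ equals, up to an explicit positive constant depending only on $n$ and $r$, the mixed area measure $S(Z_{\gamma\QQ}[r],B^n[n-1-r];\,\cdot\,)=S_r(Z_{\gamma\QQ},\,\cdot\,)$.

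The third step is the inversion. Since $\QQ$ is non-degenerate, $Z_{\gamma\QQ}$ is $n$-dimensional, hence of dimension at least $r+1$; by \cite[Corollary 8.1.4]{Schneider93} the measure $S_r(Z_{\gamma\QQ},\,\cdot\,)$ therefore determines $Z_{\gamma\QQ}$ up to translation, and central symmetry fixes it outright. Its support function then determines the even generating measure $\gamma\QQ$ by injectivity of the cosine transform on even measures (see \cite{Schneider93}). Reading off $\gamma=(\gamma\QQ)(\SS^{n-1})$ and $\QQ=\gamma^{-1}(\gamma\QQ)$ recovers $\Theta$ through \eqref{eq:IntensityMeasureHyperplanes}, which proves that $\Theta_{(r)}$ determines $\Theta$. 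For the statement about $\QQ_{(r)}$ alone, the same construction applied without the scalar $\gamma_{(r)}$ yields $S_r(Z_{\gamma\QQ},\,\cdot\,)$ only up to a positive factor, hence $Z_{\gamma\QQ}$ up to translation and dilation and $\gamma\QQ$ up to a scalar; normalising to a probability measure recovers $\QQ$ uniquely. Finally, the whole construction is equivariant under rotations of $\RR^n$, so the injectivity just established gives that $\Theta_{(r)}$ is rotation invariant if and only if $\Theta$ is.

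I expect the second step to be the main obstacle: one has to verify carefully the multiplicativity of the subspace determinants under iterated intersection and then match the completed measure with the zonoidal mixed area measure, keeping track of the (positive) normalising constants coming from the reduction formulae for balls. Once this identification is secured, the remaining uniqueness is a combination of the classical Minkowski-type uniqueness for area measures and the injectivity of the cosine transform.
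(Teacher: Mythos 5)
Your proposal is correct and ends in exactly the same place as the paper: it reduces the theorem to the recovery of $Z_{\gamma\QQ}$ from $S_r(Z_{\gamma\QQ},\,\cdot\,)$ via \cite[Corollary 8.1.4]{Schneider93} and then recovers $\gamma\QQ$ from the zonoid via the injectivity of the cosine transform (\cite[Theorem 3.5.4]{Schneider93}), with the same normalisation trick when only $\QQ_{(r)}$ rather than $\gamma_{(r)}\QQ_{(r)}$ is given. Where you genuinely diverge is in how the central identity --- that a canonical lift of $\gamma_{(r)}\QQ_{(r)}$ to an even measure on $\SS^{n-1}$ equals a constant multiple of $S_r(Z_{\gamma\QQ},\,\cdot\,)$, i.e.\ \eqref{dh1} and \eqref{determine1} --- is established. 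The paper applies the operator $T_{n-r}$ (which spreads a measure on $G(n,n-r)$ uniformly over the unit spheres $\SS_U$) and cites \cite[Theorem 8]{HTW} on flag measures, with \cite{Weil1979} and \cite[Theorem 5.3.4]{Schneider93} mentioned as alternatives. You instead complete the $(n-r)$-flat directions to a line process by intersecting with $n-1-r$ independent isotropic hyperplane processes, use the (correct) factorisation $\nabla_r(u_1,\ldots,u_r)\,[u_1^\perp\cap\cdots\cap u_r^\perp,H_1,\ldots,H_{n-1-r}]=\nabla_{n-1}(u_1,\ldots,u_r,w_1,\ldots,w_{n-1-r})$, and match the result with $S(Z_{\gamma\QQ}[r],B^n[n-1-r];\,\cdot\,)=S_r(Z_{\gamma\QQ},\,\cdot\,)$ through the classical zonoid representation of mixed area measures. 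Note that your completed transform is in fact the paper's $T_{n-r}$ in disguise: for fixed $U\in G(n,n-r)$ the inner integral over the isotropic normals produces a rotation-invariant measure on the lines in $U$, hence a fixed multiple of $\sigma_U$. Your route is more self-contained and makes the integral-geometric content of Theorem \ref{thm:IntersectionK1Kr} do the work, at the price of having to verify the determinant multiplicativity and track the reduction constants; the paper's route is shorter but leans on the external flag-measure result. Since only positivity of the constants is needed for uniqueness, the step you flag as the main obstacle is a matter of bookkeeping rather than a gap.
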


\begin{proof}
For a Borel set $A\subset G(n,n-r)$ we define 
\begin{equation}\label{eq:defMuQ}
\mu_\QQ^{(r)}(A):=\int\limits_{\SS^{n-1}}\ldots \int\limits_{\SS^{n-1}}\mathbf{1}\{u_1^\perp\cap\ldots\cap u_r^\perp\in A\}\,\nabla_r(u_1,\ldots,u_r)\, \QQ(\dint u_r)\ldots \QQ(\dint u_1)\,.
\end{equation}
In addition, we consider the transformation $T_{n-r}:\bM(G(n,n-r))\to\bM_{\rm e}(\SS^{n-1})$ given by
$$
T_{n-r}(\mu)(C):=\int\limits_{G(n,n-r)}\int\limits_{\SS_U}\mathbf{1}\{v\in C\}\, \sigma_U(\dint v)\, \mu (\dint U)\,,
$$
where $\mu\in \bM(G(n,n-r))$ and $C\subset\SS^{n-1}$ is a Borel set. Let $Z_\mathbb{Q}$ be the zonoid associated with $\mathbb{Q}$. Then, by a special case of 
\cite[Theorem 8]{HTW}, we get 
\begin{equation}\label{dh1}
T_{n-r}(\mu_\QQ^{(r)})=r!\binom{n-1}{r}\,S_r(Z_\QQ,\,\cdot\,)\,.
\end{equation}
This result can also be deduced from formula (4.62) in \cite{SW} (and the statement thereafter). Namely, this formula shows that $\mu_\QQ^{(r)}$ is (up to orthogonality) proportional to the $r$-th projection generating measure of $Z_\QQ$, and therefore 
it follows from the results in \cite{Weil1979} that $T_{n-r}(\mu_\QQ^{(r)})$ is proportional to $S_r(Z_\QQ,\,\cdot\,)$. For yet another approach, see \cite[Theorem 5.3.4]{Schneider93}.

Hence we get
\begin{equation}\label{determine1}
T_{n-r}\left(\gamma_{(r)}\QQ_{(r)}\right)=T_{n-r}\left(\frac{\gamma^r}{r!}\mu_\QQ^{(r)}\right)
=\binom{n-1}{r}\gamma^r\,S_r(Z_\QQ,\,\cdot\,)=\binom{n-1}{r}\,S_r(Z_{\gamma\QQ},\,\cdot\,)\,,
\end{equation}
where we used the linearity of $T_{n-r}$ and the homogeneity of degree $r$ of the $r$-th area measure. 

Assume now that the intensity measure $\Theta_{(r)}$ of $X_{(r)}$ is given. Since  $\Theta_{(r)}$ determines $\gamma_{(r)}\QQ_{(r)}$, 
\eqref{determine1} shows that then also the area measure $S_r(Z_{\gamma\QQ},\,\cdot\,)$ is determined. An application of 
\cite[Corollary 8.1.4]{Schneider93} yields that the centrally symmetric convex body $Z_{\gamma \QQ}$ is determined. Note that $Z_{\gamma \QQ}$ is full-dimensional, since $\QQ$ is non-degenerate and hence $\g_{(r)}\neq 0$. From \cite[Theorem 3.5.4]{Schneider93} we conclude that then 
$\gamma\QQ$ is determined. Since $\QQ$ is a probability measure,  we finally get that $\gamma$ and $\QQ$ are uniquely determined.

Now we assume that $\QQ_{(r)}$ is given. Then \eqref{determine1} shows that
$$
\frac{S_r(Z_\QQ,\,\cdot\,)}{S_r(Z_\QQ,\SS^{n-1})}=\frac{T_{n-r}\left(\QQ_{(r)}\right)}{\omega_{n-r}}\,,
$$
where we used that $T_{n-r}(\QQ_{(r)})(G(n,n-r))=\omega_{n-r}$. Hence, $\QQ_{(r)}$ determines $c^r\, S_r(Z_\QQ,\,\cdot\,)$ with 
$c:=S_r(Z_\QQ,\SS^{n-1})^{-1/r}$, and therefore also $S_r(Z_{c\,\QQ},\, \cdot\,)$. By the same arguments as before, this implies that 
$c\,\QQ$ is determined. Since $\QQ$ is a probability measure, we conclude that $\QQ$ is determined as well.

The remaining assertions follow immediately.
\end{proof}

We have the following consequence if $X$ is a Poisson process.

\begin{corollary}
Let $X$ be a stationary Poisson hyperplane process in $\RR^n$ with intensity $\g>0$ and non-degenerate directional distribution. Then, for any $r\in\{2,\ldots,n-1\}$, the distribution of $X$ is uniquely determined by the knowledge of $\g_{(r)}$ and $\QQ_{(r)}$. Moreover, $X$ is isotropic if and only if $X_{(r)}$ is isotropic.
\end{corollary}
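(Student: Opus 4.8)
The plan is to reduce everything to Theorem~\ref{thmuniquedirect2}, which already carries the analytic load, and to supply the two elementary bridges connecting it to the present Poisson and isotropy formulation. First I would record that a stationary Poisson hyperplane process is in particular weakly stationary and, by the multivariate Mecke formula (which gives $\Theta^{(m)}=\Theta^m$ for all $m\ge 2$), of type $(S_r)$ for every $r\ge 2$; hence Theorem~\ref{thmuniquedirect2} applies for the prescribed range $r\in\{2,\dots,n-1\}$. Since $\QQ$ is non-degenerate, $Z_\QQ$ is full-dimensional and $\gamma_{(r)}>0$, so no degeneracy issue arises in what follows.

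For the uniqueness statement, the key observation is that the pair $(\gamma_{(r)},\QQ_{(r)})$ is equivalent data to the full intensity measure $\Theta_{(r)}$. Indeed, $X_{(r)}$ is a weakly stationary $(n-r)$-flat process, so \eqref{eq:Intensity} reconstructs $\Theta_{(r)}$ from $(\gamma_{(r)},\QQ_{(r)})$, while \eqref{eq:GammaQQ} reconstructs $(\gamma_{(r)},\QQ_{(r)})$ from $\Theta_{(r)}$ (using $\gamma_{(r)}>0$). By Theorem~\ref{thmuniquedirect2}, $\Theta_{(r)}$ uniquely determines $\Theta$, and since a Poisson process is determined in distribution by its intensity measure (\cite[Theorem 3.2.1]{SW}), $\Theta$ determines the distribution of $X$. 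Chaining these implications gives the first claim. Alternatively, one may recover $\QQ$ directly from $\QQ_{(r)}$ via the final assertion of Theorem~\ref{thmuniquedirect2} and then read off $\gamma^r$ by setting $A=G(n,n-r)$ in \eqref{eq:QkHyperplane}, the relevant total-mass integral being strictly positive because it is proportional to $S_r(Z_\QQ,\SS^{n-1})>0$ (by \eqref{dh1} and the full-dimensionality of $Z_\QQ$).

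For the equivalence of isotropies I would treat the two directions separately. The assignment $X\mapsto X_{(r)}$ commutes with rotations, since rotating the flats and then intersecting coincides with intersecting and then rotating; hence if $X$ is isotropic then $X_{(r)}$ is isotropic as well, and this direction is purely formal. For the converse, isotropy of $X_{(r)}$ forces its intensity measure $\Theta_{(r)}=\EE X_{(r)}$ to be rotation invariant, and the ``in particular'' clause of Theorem~\ref{thmuniquedirect2} then yields rotation invariance of $\Theta$.

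The one genuinely delicate point, and the place where the Poisson hypothesis is indispensable, is completing this converse: rotation invariance of $\Theta$ must be upgraded to distributional isotropy of $X$. Here I would argue that for every $\vartheta\in SO(n)$ the rotated process $\vartheta X$ is again Poisson, now with intensity measure $\vartheta\Theta=\Theta$, so by \cite[Theorem 3.2.1]{SW} $\vartheta X$ and $X$ have the same distribution, i.e.\ $X$ is isotropic. This last step is precisely the implication that would fail for a general weakly stationary hyperplane process, and it is why the corollary is stated for Poisson processes; everything else is bookkeeping built on Theorem~\ref{thmuniquedirect2}.
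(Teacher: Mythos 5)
Your proposal is correct and is exactly the intended derivation: the paper states this corollary without proof as an immediate consequence of Theorem~\ref{thmuniquedirect2}, and your argument supplies precisely the expected bridges (Poisson $\Rightarrow$ type $(S_r)$ via the Mecke formula, equivalence of $(\g_{(r)},\QQ_{(r)})$ with $\Theta_{(r)}$, determination of a Poisson process by its intensity measure, and equivariance of $X\mapsto X_{(r)}$ under rotations). You also correctly isolate the one place where the Poisson hypothesis is essential, namely upgrading rotation invariance of $\Theta$ to distributional isotropy of $X$.
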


Let us refine and strengthen the preceding uniqueness result in the form of a stability estimate. Here the situation can be described as follows. Let 
$X$ and $X'$ be two 
weakly stationary hyperplane processes of type $(S_r)$ in $\RR^n$ with (positive) intensities $\g$ and $\g'$ and direction distributions $\QQ$ and $\QQ'$, respectively. Let $\g_{(r)}$ denote the intensity and $\QQ_{(r)}$ the direction distribution of the $r$-th intersection process
$X_{(r)}$ of $X$, and let $\g_{(r)}'$ and $\QQ_{(r)}'$ be the corresponding quantities for $X_{(r)}'$. Suppose now that $\g_{(r)}\QQ_{(r)}$ 
and  $\g_{(r)}'\QQ_{(r)}'$ are close in some (quantitative) sense. Does this imply that also $\g\QQ$ and $\g'\QQ'$ are close to each other?

In view of Theorem \ref{thmuniquedirect2} and Corollary \ref{cor:StabilityProhorov} and the connection between directional distributions and area measures, we can get an answer if the closeness is measured by means of the bounded Lipschitz or the Prohorov distance. For that purpose, we have to specify a metric on $G(n,k)$ with $k\in \{1,\ldots,n-1\}$. We choose the metric $\Delta$ induced by the notion of a \textit{direct rotation} (see \cite{HS2010} and the literature cited therein). Let $O_n$ denote the rotation group of $\RR^n$. For a rotation 
$\rho\in O_n$, let $M_\rho=((M_\rho)_{ij})_{i,j=1}^n$ be the matrix of $\rho$ with respect to an orthonormal basis $e_1,\ldots,e_n$ of $\RR^n$, that is, $(M_\rho)_{ij}=\langle \rho(e_i),e_j\rangle$, and let $I=(\delta_{ij})_{i,j=1}^n$ denote the identity matrix. Then  
$$
|\rho|:=\sum_{i,j=1}^n((M_\rho)_{ij}-\delta_{ij})^2
$$
is independent of the chosen orthonormal basis. 
For any  $u\in\mathbb{S}^{n-1}$, we have $\|\rho u-u\|\le|\rho|$. A metric on $G(n,k)$ is then defined by   
$$
\Delta(U_1,U_2):=\min\{|\rho|:\rho\in O_n,\,U_2=\rho U_1\},\qquad U_1,U_2\in G(n,k)\,.
$$
The topology induced by this metric is the standard topology on $G(n,k)$, see \cite{HS2010} for further details. A comparison 
of different notions of distances between subspaces is provided in the survey \cite[Section 4.3]{Edelman} and in \cite{HammLee}.

\begin{theorem}\label{stability2} Let $r\in\{2,\dots ,n-1\}$, and let $X, X'$ be two weakly stationary hyperplane processes in $\RR^n$ of type $(S_r)$ with intensities $\gamma,\g'>0$ and spherical directional distributions $\QQ,\QQ'$, respectively. Assume that 
$\g\QQ, \g'\QQ'\in\bM_{\rm e}(\rho,R)$ with some constants $0<\rho\le R<\infty$. Then there is a constant $c>0$, depending only on $n,\rho$ and $R$, such that
\begin{equation}\label{eqstab1}
d_{BL}(\g\QQ,\g'\QQ') \le c \,  d_{BL}(\g_{(r)}\QQ_{(r)},\g_{(r)}'\QQ_{(r)}')^{2c(r,n)}
\end{equation}
and
\begin{equation}\label{eqstab2}
d_P(\g\QQ,\g'\QQ') \le c \, d_P(\g_{(r)}\QQ_{(r)},\g_{(r)}'\QQ_{(r)}')^{c(r,n)}\,,
\end{equation}
where $c(r,n)=(2^{r}(n+1)(n+4))^{-1}$.
\end{theorem}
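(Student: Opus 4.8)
The plan is to reduce the statement, via the abstract stability estimates of Section~\ref{sec:Stability} and the identity \eqref{determine1}, to a Lipschitz property of the lifting operator $T_{n-r}$. Since $\g\QQ,\g'\QQ'\in\bM_{\rm e}(\rho,R)$ by assumption, I would apply Theorem~\ref{stability} (with $m=r$, $\mu=\g\QQ$, $\nu=\g'\QQ'$) and Corollary~\ref{cor:StabilityProhorov} to obtain
$$
d_{BL}(\g\QQ,\g'\QQ')\le c\, d_{BL}\big(S_r(Z_{\g\QQ},\,\cdot\,),S_r(Z_{\g'\QQ'},\,\cdot\,)\big)^{2c(r,n)}
$$
and the corresponding Prohorov inequality with exponent $c(r,n)$. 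By \eqref{determine1} one has $S_r(Z_{\g\QQ},\,\cdot\,)=\binom{n-1}{r}^{-1}T_{n-r}(\g_{(r)}\QQ_{(r)})$, and likewise for the primed quantities; hence everything reduces to bounding $d_{BL}$ and $d_P$ between $T_{n-r}(\g_{(r)}\QQ_{(r)})$ and $T_{n-r}(\g_{(r)}'\QQ_{(r)}')$ by the corresponding distances between $\g_{(r)}\QQ_{(r)}$ and $\g_{(r)}'\QQ_{(r)}'$ on $G(n,n-r)$, where the latter are measured with the metric $\Delta$.

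For the bounded Lipschitz distance this is immediate. Given $f:\SS^{n-1}\to\RR$ with $\|f\|_{BL}\le 1$, I would introduce its lift $\tilde f(U):=\int_{\SS_U}f\,\dint\sigma_U$ on $G(n,n-r)$, so that $\int_{\SS^{n-1}}f\,\dint T_{n-r}(\lambda)=\int_{G(n,n-r)}\tilde f\,\dint\lambda$ for every finite measure $\lambda$. From $\sigma_U(\SS_U)=\o_{n-r}$ one gets $\|\tilde f\|_\infty\le\o_{n-r}$, and writing $U_2=gU_1$ for a rotation $g\in O_n$ realizing $\Delta(U_1,U_2)=|g|$ and substituting $v=gw$ gives $\tilde f(U_2)-\tilde f(U_1)=\int_{\SS_{U_1}}\big(f(gw)-f(w)\big)\,\sigma_{U_1}(\dint w)$; the estimate $\|gw-w\|\le|g|$ then yields $\|\tilde f\|_L\le\o_{n-r}$, hence $\|\tilde f\|_{BL}\le 2\o_{n-r}$. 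Therefore $d_{BL}(T_{n-r}(\mu),T_{n-r}(\nu))\le 2\o_{n-r}\,d_{BL}(\mu,\nu)$, and since $d_{BL}(a\lambda_1,a\lambda_2)=a\,d_{BL}(\lambda_1,\lambda_2)$ for $a>0$, the area measures satisfy $d_{BL}(S_r(Z_{\g\QQ},\,\cdot\,),S_r(Z_{\g'\QQ'},\,\cdot\,))\le\binom{n-1}{r}^{-1}2\o_{n-r}\,d_{BL}(\g_{(r)}\QQ_{(r)},\g_{(r)}'\QQ_{(r)}')$.

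The Prohorov case is the main point requiring care. Here I would use the layer-cake representation $T_{n-r}(\lambda)(C)=\int_0^{\o_{n-r}}\lambda(\{U:\Phi_C(U)>t\})\,\dint t$, where $\Phi_C(U):=\sigma_U(C\cap\SS_U)$ and $C\subset\SS^{n-1}$ is Borel. The geometric core is the observation that if $U_2=gU_1$ with $|g|<\d$, then, substituting $v=gw$ and using $\|g^{-1}v-v\|\le|g|$ together with the equivalence of the chordal distance and the chosen metric on $\SS^{n-1}$, one has $\Phi_C(U_1)\le\Phi_{C^{c_0\d}}(U_2)$ for a constant $c_0$. This gives the inclusion $\{U:\Phi_C(U)>t\}^{\d}\subset\{U:\Phi_{C^{c_0\d}}(U)>t\}$, so inserting the Prohorov bound for $(\mu,\nu)$ with $\d:=d_P(\mu,\nu)$ into the layer-cake integral and integrating in $t$ produces $T_{n-r}(\mu)(C)\le T_{n-r}(\nu)(C^{c_0\d})+\o_{n-r}\,\d$. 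With $\varepsilon:=\max\{c_0,\o_{n-r}\}\,\d$ one has $C^{c_0\d}\subset C^{\varepsilon}$ and $\o_{n-r}\d\le\varepsilon$, so the Prohorov condition holds at level $\varepsilon$, and by symmetry $d_P(T_{n-r}(\mu),T_{n-r}(\nu))\le\varepsilon$; the resulting constant depends only on $n$ and $r$, not on the total masses. Combined with the elementary bound $d_P(a\lambda_1,a\lambda_2)\le\max\{1,a\}\,d_P(\lambda_1,\lambda_2)$ applied with $a=\binom{n-1}{r}^{-1}\le 1$, this controls $d_P$ between the two area measures by $d_P(\g_{(r)}\QQ_{(r)},\g_{(r)}'\QQ_{(r)}')$ up to a constant.

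Finally, I would chain the estimates. Substituting the bound on the $d_{BL}$-distance of the area measures into the inequality from Theorem~\ref{stability} and absorbing the factor $(\binom{n-1}{r}^{-1}2\o_{n-r})^{2c(r,n)}$ into a new constant $c=c(n,\rho,R)$ yields \eqref{eqstab1}; the parallel computation with Corollary~\ref{cor:StabilityProhorov} and the Prohorov Lipschitz bound yields \eqref{eqstab2}. The exponents $2c(r,n)$ and $c(r,n)$ are inherited unchanged, because the passage through $T_{n-r}$ contributes only multiplicative constants. The genuinely delicate step is the Prohorov Lipschitz property of $T_{n-r}$: there the additive error and the metric dilation $C\mapsto C^{c_0\d}$ are coupled in the definition of $d_P$, which is why the layer-cake decomposition (rather than a pointwise Lipschitz argument as for $d_{BL}$) is needed to transfer the bound from $G(n,n-r)$ to $\SS^{n-1}$.
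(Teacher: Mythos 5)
Your proof is correct, and for inequality \eqref{eqstab1} it follows the paper's argument essentially verbatim: the reduction via \eqref{determine1} to the area measures $S_r(Z_{\g\QQ},\,\cdot\,)$, the application of Theorem \ref{stability}, and the bounded-Lipschitz estimate $d_{BL}(T_{n-r}\mu,T_{n-r}\nu)\le c\,d_{BL}(\mu,\nu)$ proved by lifting a test function $f$ on $\SS^{n-1}$ to $U\mapsto\int_{\SS_U}f\,\dint\sigma_U$ on $G(n,n-r)$ are exactly the paper's steps (your constant $2\o_{n-r}$ versus the paper's $\o_{n-r}$ is immaterial). Where you genuinely diverge is the Prohorov estimate \eqref{eqstab2}. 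The paper does \emph{not} prove a Prohorov-Lipschitz property of $T_{n-r}$; instead it deduces \eqref{eqstab2} from \eqref{eqstab1} by applying Lemma \ref{compare} on both ends, which costs the square root responsible for halving the exponent from $2c(r,n)$ to $c(r,n)$, and which forces the paper to verify that $\g_{(r)}\QQ_{(r)}$ and $\g_{(r)}'\QQ_{(r)}'$ lie in some $\bM(\bar\rho,\bar R)$ on $G(n,n-r)$ (done via $S_r(Z_{\g\QQ},\SS^{n-1})=nV(Z_{\g\QQ}[r],B^n[n-r])$ and monotonicity of mixed volumes). Your route instead establishes $d_P(T_{n-r}\mu,T_{n-r}\nu)\le c\,d_P(\mu,\nu)$ directly through the layer-cake decomposition of $\Phi_C(U)=\sigma_U(C\cap\SS_U)$ together with the inclusion $\{\Phi_C>t\}^{\d}\subset\{\Phi_{C^{c_0\d}}>t\}$, and then invokes Corollary \ref{cor:StabilityProhorov}, which already carries the exponent $c(r,n)$; this is a clean argument (the substitution $v=gw$ and the bound $\|gw-w\|\le|g|$ are exactly what is needed, and the rescaling by $\binom{n-1}{r}^{-1}\le 1$ is harmless for $d_P$), it yields the same exponent, and it has the advantage of not requiring the mass bounds on the intersection directional measures. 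The only cosmetic caveats are that one should take $\d$ strictly larger than $d_P(\mu,\nu)$ and pass to the infimum at the end, and that the constant $c_0$ depends on which of the two admissible metrics on $\SS^{n-1}$ is fixed; neither affects the validity of the argument.
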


\begin{proof} We use the same notation as in the proof of Theorem \ref{thmuniquedirect2}. From \eqref{determine1} 
we have
\begin{equation}\label{eqneu}
T_{n-r}(\g_{(r)}\QQ_{(r)}) = \frac{1}{ r!}\,T_{n-r}(\mu_{\g\QQ}) = \binom{n-1}{ r}\,S_r(Z_{\g\QQ},\,\cdot\,).
\end{equation}
Since $\g\QQ, \g'\QQ'\in\bM_{\rm e}(\rho,R)$, we can first apply Theorem \ref{stability}  and then \eqref{eqneu} to get
\begin{align*}
d_{BL}(\g\QQ,\g'\QQ') &\le c_1\, d_{BL}\big(S_r(Z_{\g\QQ},\,\cdot\,),S_r(Z_{\g'\QQ'},\,\cdot\,)\big)^{2c(r,n)}\\
&\le c_2\, d_{BL}\big(T_{n-r}(\g_{(r)}\QQ_{(r)}),T_{n-r}(\g_{(r)}'\QQ_{(r)}')\big)^{2c(r,n)}
\end{align*}
with constants $c_1,c_2>0$, depending only on $n,\rho$ and $R$. 
Consequently, the assertion of part (a) of the theorem follows once we have shown that 
\begin{equation}\label{T-stability}
d_{BL}(T_{n-r}\mu,T_{n-r}\nu) \le \omega_{n-r}\, d_{BL}(\mu,\nu)
\end{equation}
holds for arbitrary finite Borel measures $\mu,\nu$ on $G(n,n-r)$. For this, let $f:\SS^{n-1}\to\RR$ be a function with $\|f\|_{BL}\le 1$ and define $F_f:G(n,n-r)\to\RR$ by 
$$
F_f(U):=\int\limits_{\SS_U}f(v)\, \sigma_U(\dint v),\qquad U\in G(n,n-r).
$$
Clearly, $\|F_f\|_\infty\le \omega_{n-r}\|f\|_\infty$. Let $U_1,U_2\in G(n,n-r)$ with $\Delta(U_1,U_2)=|\rho|$ for some rotation $\rho\in O_n$ satisfying $U_2=\rho U_1$. Since $|f(v)-f(\rho v)|\le \|f\|_L\|v-\rho v\|\le \|f\|_L|\rho|$ for all $v\in \mathbb{S}^{n-1}$, we get
\begin{align*}
|F_f(U_1)-F_f(U_2)|&=\Big|\int\limits_{\SS_{U_1}}f(v)\, \sigma_{U_1}(\dint v)-\int\limits_{\SS_{U_2}}f(v)\, \sigma_{U_2}(\dint v)\Big|\\
&=\Big|\int\limits_{\SS_{U_1}}f(v)\, \sigma_{U_1}(\dint v)-\int\limits_{\SS_{U_1}}f(\rho v)\, \sigma_{U_1}(\dint v)\Big|\\
&\le \int\limits_{\SS_{U_1}}\big| f(v)-f(\rho v)\big|\,\sigma_{U_1}(\dint v)\\
&\leq \o_{n-r}\,|\rho|\,\|f\|_L\\
&=\omega_{n-r}\,\Delta(U_1,U_2)\,\|f\|_L\,.
\end{align*}
This shows that $\|F_f\|_L\le \omega_{n-r}\,\|f\|_L$, and thus also $\|F_f\|_{BL}\le \omega_{n-r}\,\|f\|_{BL}$. Therefore 
\begin{align*}
\Big|\int\limits_{\SS^{n-1}}f(v)\,(T_{n-r}\mu)(\dint v)-\int\limits_{\SS^{n-1}}f(v)\,(T_{n-r}\nu)(\dint v)\Big|&=\Big|\int\limits_{G(n,n-r)}F_f(U)\,\mu(\dint U)-\int\limits_{G(n,n-r)}F_f(U)\,\nu(\dint U)\Big|\\
&\le\|F_f\|_{BL}\, d_{BL}(\mu,\nu)\\
&\le \omega_{n-r}\,\|f\|_{BL}\,d_{BL}(\mu,\nu)\,,
\end{align*}
which proves \eqref{T-stability}. Thus part (a) of the theorem is established.

Part (b) follows from Lemma \ref{compare} once we have shown that in addition to $\g\QQ,\g'\QQ'\in\bM_{\rm e}(\rho,R)$ we 
also have $\g_{(r)}\QQ_{(r)},\g_{(r)}'\QQ_{(r)}'\in\bM(\bar\rho,\bar R)$ on $G(n,n-r)$ with some constants $0<\bar\rho\le\bar R<\infty$,  
where $\bar\rho$ depends only on $\rho$ and $\bar R$ depends only on $R$. To see that this is indeed the case, we use \eqref{determine1}, \eqref{eq:IntVolMixedVolRelation} and \eqref{eq:VmSmRelation} to find that
\begin{align*}
\left(\g_{(r)}\QQ_{(r)}\right)(G(n,n-r))
&=\frac{1}{\omega_{n-r}}\binom{n-1}{r} S_r(Z_{\gamma \QQ},\SS^{n-1})\\
&=\frac{1}{\omega_{n-r}}\binom{n-1}{r}n V(Z_{\gamma \QQ}[r],B^n[n-r])\,.
\end{align*}
Since $\g\QQ \in\bM_{\rm e}(\rho,R)$, we can now argue as in the proof of Corollary 
\ref{cor:StabilityProhorov}.
\end{proof}


\section{Proximity}\label{sec:proximity}

In this section we introduce proximity processes. In the stationary case, the intensity of such processes has first been studied in \cite{Schneider99}, see also \cite{SW}. We present generalizations in different directions and also investigate uniqueness and stability problems.

\subsection{The proximity process}\label{sec:intensitymeasure}

Let $X_1$ be a weakly stationary $k_1$-flat process and $X_2$ a weakly stationary $k_2$-flat process in $\RR^n$ such that $k_1+k_2<n$ and $k_1,k_2\ge 1$. Note that this is dual to the situation considered in Section \ref{sec:Intersection}, where $k_1$ and $k_2$ were chosen such that $k_1+k_2\geq n$. The intensity measures of $X_1$ and $X_2$ are denoted by $\Theta_1$ and $\Theta_2$, respectively. In the following, we assume that $\Theta_1,\Theta_2\neq 0$. In this case $\Theta_1$ and $\Theta_2$ admit a factorization as in \eqref{eq:Intensity} with intensities $\g_1,\g_2>0$ and directional distributions $\QQ_1,\QQ_2$, respectively.

The \textit{proximity process} $\Phi$ associated with $X_1$ and $X_2$ is the random collection of line segments $s=\overline{x_Ex_F}$, the perpendicular of $E$ and $F$, for which $E\in X_1$ and $F\in X_2$ are disjoint, in general position and such that $d(E,F)=\|x_E-x_F\|\leq\delta$ for some prescribed distance threshold $\delta>0$. Throughout the following, we keep $\delta$ fixed. By $\L$ we denote the intensity measure of $\Phi$.  As indicated in Section \ref{sec:preliminaries}, $\Phi$ and $\Lambda$ are considered as measures on the product space $\RR^d\times(0,\delta]\times\SS^{n-1}$, which are symmetric in the third component. More formally, we take $\Lambda$ as the intensity measure of the random measure
$$
\sum_{s\in\Phi}{\bf 1}\{(m(s),d(s))\in A\times B\}\,\frac{1}{2}\mathcal{H}^0(\phi(s)\cap C),
$$
where $A\subset\RR^n$, $B\subset(0,\infty)$ and $C\subset\SS^{n-1}$ are Borel sets. Recall that $\phi(s)$ denotes the linear subspace which is  parallel to the segment $s$. In case that $s=\overline{x_Ex_F}$, as above, we will often write $\phi(E,F)$ instead of $\phi(s)$.

\begin{remark}\rm 
The restriction $d(E,F)\leq\d$ in the definition of $\Phi$ is geometrically motivated and in fact necessary in order to interpret $\Phi$ as a segment process in $\RR^n$ as introduced in Section \ref{sec:preliminaries}. Without the distance threshold $\delta$, the union of all segments of the proximity process may be a dense subset of $\RR^n$. 
\end{remark}

Our first goal is to derive an expression for $\L$ in terms of the parameters of the underlying processes $X_1$ and $X_2$, that is to say, in terms of $\g_1,\g_2$, $\QQ_1$ and $\QQ_2$. In particular, the next result ensures that, for independent $X_1,X_2$,  the proximity process $\Phi$ is weakly stationary in the sense that the intensity measure $\Lambda$ of $\Phi$ is translation invariant (with respect to the first component). The intensity of $\Phi$ is called the \textit{proximity} associated with the two flat processes $X_1$ and $X_2$. 

\begin{theorem}\label{thm:IntensityMeasureSegmentProcess}
Let $k_1,k_2\ge 1$ with $k_1+k_2<n$, let $X_1$ be a weakly stationary $k_1$-flat process, $X_2$ a weakly stationary $k_2$-flat process in $\RR^n$ and suppose that $X_1$ and $X_2$ are independent. Then
\begin{equation*}
\begin{split}
\L(A\times B\times C)= &\,{\g_1\g_2}\,\ell(A)\int\limits_B t^{n-k_1-k_2-1}\,\dint t\\ &\qquad\times\int\limits_{G(n,k_1)}\int\limits_{G(n,k_2)}\sigma_{(L+M)^\perp}(C\cap(L+M)^\perp)\,[L,M]\,\QQ_2(\dint M)\,\QQ_1(\dint L)
\end{split}
\end{equation*}
for Borel sets $A\subset\RR^n$, $B\subset(0,\delta]$ and $C\subset\SS^{n-1}$.
If $\QQ_1$ and $\QQ_2$ are rotation invariant, then $\L$ is invariant under rotations of its third argument. 
\end{theorem}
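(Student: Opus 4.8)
The plan is to evaluate $\L$ from its defining Campbell average and then to reduce the integral over pairs of flats to a single Jacobian computation for a fixed pair of directions. From the definition of $\L$ as the intensity measure of the marked point process displayed before the theorem I would write
\[
\L(A\times B\times C)=\EE\sum_{E\in X_1}\sum_{F\in X_2}\ind\{[E,F]>0\}\,\ind\{m(E,F)\in A,\,d(E,F)\in B\}\,\tfrac12\mathcal{H}^0(\phi(E,F)\cap C),
\]
where $B\subset(0,\delta]$ already encodes the distance threshold and the disjointness of $E$ and $F$. Since $X_1$ and $X_2$ are independent, Campbell's theorem applied to each process turns the double sum into $\int_{A(n,k_1)}\int_{A(n,k_2)}(\cdots)\,\Theta_2(\dint F)\,\Theta_1(\dint E)$, and inserting the disintegration \eqref{eq:Intensity} of $\Theta_1,\Theta_2$ replaces the integration over flats by an integration over directions $L\in G(n,k_1)$, $M\in G(n,k_2)$ and translations $x\in L^\perp$, $y\in M^\perp$, with $E=L+x$, $F=M+y$ and prefactor $\g_1\g_2$.

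Second, for fixed $L,M$ in general position I would carry out the inner integral over $(x,y)\in L^\perp\times M^\perp$ by a linear change of variables. Put $U:=L+M$; the common perpendicular of $E$ and $F$ is parallel to $U^\perp$, and $x_F-x_E=P_{U^\perp}(y-x)=:v\in U^\perp$, so $d(E,F)=\|v\|$ and $\phi(E,F)=\RR v$. Together with the midpoint $m:=m(E,F)$ this gives a linear map $(x,y)\mapsto(m,v)$ from $L^\perp\times M^\perp$ onto $\RR^n\times U^\perp$, namely $x=P_{L^\perp}(m)-v/2$ and $y=P_{M^\perp}(m)+v/2$. Splitting $m=m_U+m_{U^\perp}$ along $\RR^n=U\oplus U^\perp$, this map is block diagonal: the $U^\perp$-block $(m_{U^\perp},v)\mapsto(m_{U^\perp}-v/2,\,m_{U^\perp}+v/2)$ is unimodular, while the $U$-block is $\Xi\colon U\to N_1\oplus N_2$, $m_U\mapsto(P_{N_1}m_U,P_{N_2}m_U)$, where $N_1:=L^\perp\cap U$ and $N_2:=M^\perp\cap U$.

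The main obstacle is the identity $|\det\Xi|=[L,M]$, which I would prove by linear algebra. For injectivity, $P_{N_1}m_U=P_{N_2}m_U=0$ forces $m_U\in L$ and $m_U\in M$, hence $m_U\in L\cap M=\{0\}$, so $\Xi$ is invertible precisely in general position. For the value, since the $P_{N_i}$ are orthogonal projections in $U$ one has $\Xi^*\Xi=P_{N_1}+P_{N_2}=2\,\mathrm{id}_U-P_L-P_M$ as an operator on $U$, whence $(\det\Xi)^2=\det(2\,\mathrm{id}_U-P_L-P_M)$. A short evaluation in a basis of $U$ adapted to the principal angles between $L$ and $M$ identifies this determinant with $[L,M]^2$ (already the case $k_1=k_2=1$, where $U$ is a plane and the two lines meet at angle $\theta$, gives $\sin^2\theta=[L,M]^2$). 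Consequently $\ell_{L^\perp}(\dint x)\,\ell_{M^\perp}(\dint y)=[L,M]\,\ell(\dint m)\,\ell_{U^\perp}(\dint v)$.

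Third, I substitute and integrate. The midpoint $m$ ranges over all of $\RR^n$, so $\int\ind_A(m)\,\ell(\dint m)=\ell(A)$; polar coordinates $v=t\omega$ on $U^\perp$ give $\ell_{U^\perp}(\dint v)=t^{\,n-k_1-k_2-1}\dint t\,\sigma_{U^\perp}(\dint\omega)$, whose radial part yields $\int_B t^{\,n-k_1-k_2-1}\dint t$; and since $\phi(E,F)\cap\SS^{n-1}=\{\omega,-\omega\}$ and $\sigma_{U^\perp}$ is even, $\int_{\SS_{U^\perp}}\tfrac12\mathcal{H}^0(\{\omega,-\omega\}\cap C)\,\sigma_{U^\perp}(\dint\omega)=\sigma_{U^\perp}(C\cap U^\perp)$. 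Restoring the outer integrals over $L,M$ and the factor $\g_1\g_2$ reproduces the asserted formula. For the rotation statement, with $\QQ_1=\nu_{k_1}$, $\QQ_2=\nu_{k_2}$ and $g\in O_n$ one has $\sigma_{(L+M)^\perp}(gC\cap(L+M)^\perp)=\sigma_{(g^{-1}L+g^{-1}M)^\perp}(C\cap(g^{-1}L+g^{-1}M)^\perp)$ and $[L,M]=[g^{-1}L,g^{-1}M]$; substituting $L\mapsto g^{-1}L$, $M\mapsto g^{-1}M$ and using the rotation invariance of the Haar measures leaves the integral unchanged, so $\L(A\times B\times gC)=\L(A\times B\times C)$.
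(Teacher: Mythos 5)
Your proof is correct and follows essentially the same route as the paper's: Campbell's formula combined with the disintegration \eqref{eq:Intensity}, a change of variables from $(x,y)\in L^\perp\times M^\perp$ to the midpoint and the perpendicular vector with Jacobian $[L,M]$, and polar coordinates in $(L+M)^\perp$. The only notable difference is that you organise the substitution as a single block-diagonal linear map and verify $|\det\Xi|=[L,M]$ from scratch via $\Xi^*\Xi=2\,\mathrm{id}_U-P_L-P_M$ and principal angles, whereas the paper performs the substitution in two stages and quotes the known identity $[L^\perp\cap(L+M),\,M^\perp\cap(L+M)]=[L,M]$ following \cite{Schneider99}; your self-contained Jacobian computation is a welcome addition but not a different method.
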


\begin{proof}
By construction of the proximity process, one has that 
$$
2\,\L(A\times B\times C)=\,\EE\sum_{E\in X_1,F\in X_2} {\bf 1}\left\{m(E,F)\in A,\,d(E,F)\in B,\,[E,F]>0\right\} \mathcal{H}^0(\phi(E,F)\cap  C)\,.
$$ 
Note that if $E\cap F\neq\emptyset$, then the intersection is a single point and 
$d(E,F)=0$, hence the indicator function is zero, since $B\subset(0,\delta]$. 

Using the independence of $X_1$ and $X_2$ and the decomposition \eqref{eq:Intensity} of the intensity measures $\Theta_1$ and $\Theta_2$, this can be re-written as
\begin{align*}
2\,\L(A\times B\times C)&= {\g_1\g_2}\int\limits_{G(n,k_1)}\int\limits_{G(n,k_2)}\int\limits_{L^\perp}\int\limits_{M^\perp}{\bf 1}\left\{ m(L+x,M+y)\in A,\,d(L+x,M+y)\in B\right\}\\
&\qquad\times \mathcal{H}^0\left(\phi(L+x,M+y)\cap C\right)\,{\bf 1}\{[L,M]>0\}\,\ell_{M^\perp}(\dint y)\,\ell_{L^\perp}(\dint x)\,\QQ_2(\dint M)\,\QQ_1(\dint L)\, .
\end{align*}
We introduce the two subspaces $V:=L+M$ and $U:=V^\perp$ and decompose $x\in L^\perp$ and $y\in M^\perp$ according to
\begin{eqnarray*}
 x &=& x_1+x_2,\quad x_1\in L^\perp\cap V,\,x_2\in U\,,\\
 y &=& y_1+y_2,\quad y_1\in M^\perp\cap V,\,y_2\in U\,.
\end{eqnarray*}
Then 
$$
m(L+x,M+y)=z+\frac{x_2+y_2}{2},\quad d(L+x,M+y)=\|x_2-y_2\|,\quad \phi(L+x,M+y)=\phi(\overline{x_2y_2}),
$$
if $x_2\neq y_2$, 
where $z$ is the unique point satisfying  $\{z\}=(L+x_1)\cap(M+y_1)$. Thus, we find that
\begin{align*}
&2\L(A\times B\times C)= {\g_1\g_2} \int\limits_{G(n,k_1)}\int\limits_{G(n,k_2)}\int\limits_U\int\limits_U\int\limits_{L^\perp\cap V}\int\limits_{M^\perp\cap V}{\bf 1}\left\{z+\frac{x_2+y_2}{ 2}\in A,\,\|x_2-y_2\|\in B\right\}\\
 &\qquad\times \mathcal{H}^0\left(\phi(\overline{x_2y_2})\cap  C\right)\,{\bf 1}\{[L,M]>0\}\,\ell_{M^\perp\cap V}(\dint y_1)\,\ell_{L^\perp\cap V}(\dint x_1)\,\ell_U(\dint y_2)\,\ell_U(\dint x_2)\,\QQ_2(\dint M)\,\QQ_1(\dint L)\,.
\end{align*}
The inner double integral can be evaluated similarly as in \cite{Schneider99} (see also the proof of Theorem 4.4.10 in \cite{SW}). In particular, 
we use the inverse of the transformation $(L^\perp\cap V)\times (M^\perp \cap V)\to V$, $(x_1,y_1)\mapsto z$, which has Jacobian 
$[L^\perp\cap V,M^\perp\cap V]=[L,M]$. Hence, we obtain
\begin{align*}
2\,\L(A\times B\times C)&= {\g_1\g_2}\int\limits_{G(n,k_1)}\int\limits_{G(n,k_2)}\int\limits_U\int\limits_U [L,M]\, \ell_{k_1+k_2}\left(A\cap\left(V+\frac{x_2+y_2}{ 2}\right)\right){\bf 1}\{\|x_2-y_2\|\in B\}\\
&\qquad\qquad\times \mathcal{H}^0\left(\phi(\overline{x_2y_2})\cap C\right)\,\ell_U(\dint y_2)\,\ell_U(\dint x_2)\,\QQ_2(\dint M)\,\QQ_1(\dint L)\,.
\end{align*}
Applying the change of variables $u:=x_2-y_2$ and $v:=(x_2+y_2)/2$, which has Jacobian one, we get
\begin{equation}\label{eq:proofThm1LastStep}
\begin{split}
\L(A\times B\times C)= {\g_1\g_2}&\int\limits_{G(n,k_1)}\int\limits_{G(n,k_2)}[L,M]\left(\,\int\limits_U\ell_{k_1+k_2}\big(A\cap(V+v)\big)\,\ell_U(\dint v)\right)\\
&\qquad\times\frac{1}{2}\left(\,\int\limits_U{\bf 1}\left\{\|u\|\in B\right\}\mathcal{H}^0\left(L(u)\cap C\right)\,\ell_U(\dint u)\right)\,\QQ_2(\dint M)\,\QQ_1(\dint L)\,,
\end{split}
\end{equation}
where $L(u)$ stands for the one-dimensional linear subspace spanned by $u$.
Since $U=V^\perp$, the first integral in brackets is just $\ell(A)$. For the second integral we introduce spherical coordinates in the $(n-k_1-k_2)$-dimensional subspace $U=(L+M)^\perp$ and obtain
\begin{align*}
\frac{1}{2}\int\limits_U{\bf 1}\left\{\|u\|\in B\right\}\mathcal{H}^0\left(L(u)\cap C\right)\,\ell_U(\dint u) &=\frac{1}{2}
\int\limits_{\SS_U}\mathcal{H}^0\left(L(u)\cap C\right)\,\s_U(\dint u) \,
\int\limits_B t^{n-k_1-k_2-1}\,\dint t \\
&= \sigma_{U}(C\cap U)\int\limits_B t^{n-k_1-k_2-1}\,\dint t\,.
\end{align*}
Combining this with \eqref{eq:proofThm1LastStep}, we arrive at the desired expression for $\L(A\times B\times C)$. Finally, the invariance statement follows directly from this expression.
\end{proof}

The intensity and the directional distribution of $\Phi$ are denoted by $N$ and $\cR$, respectively, recall \eqref{eq:NvR}. 
As a consequence of Theorem \ref{thm:IntensityMeasureSegmentProcess}, we conclude that
$$
N=\gamma_1\gamma_2\kappa_{n-k_1-k_2}\delta^{n-k_1-k_2}\int\limits_{G(n,k_1)}\int\limits_{G(n,k_2)}[L,M]\, \QQ_2(\dint M)\, \QQ_1(\dint L)
$$
and, if $N\neq 0$,
$$
\cR(C)=\frac{\int\limits_{G(n,k_1)}\int\limits_{G(n,k_2)}\sigma_{(L+M)^\perp}(C\cap(L+M)^\perp)\,[L,M]\,\QQ_2(\dint M)\,\QQ_1(\dint L)}{\omega_{n-k_1-k_2}\int\limits_{G(n,k_1)}^{\phantom{\vspace{0.01ex}}}\int\limits_{G(n,k_2)}[L,M]\, \QQ_2(\dint M)\, \QQ_1(\dint L)}\,.
$$

We now consider the  segment process $ \Phi$ associated with a \textit{single} weakly stationary $k$-flat process $X$ of type $(S_2)$ with $1\le k<n/2$, intensity $\g$ and directional distribution $\QQ$. A slight modification of the proof of Theorem \ref{thm:IntensityMeasureSegmentProcess} shows that 
$\Phi$ is weakly stationary and yields also formulae for the proximity and the directional distribution of $\Phi$. Here, we define 
$$
\Phi(A\times B\times C)=\frac{1}{2}\sum_{(E,F)\in X_{\neq}^2} {\bf 1}\{(m(E,F),d(E,F))\in A\times B,\,[E,F]>0\}\,\frac{1}{2}\mathcal{H}^0(\phi(E,F)\cap C),
$$
where $A\subset\RR^n$, $B\subset(0,\infty)$ and $C\subset\SS^{n-1}$ are Borel sets. In this setting, we write $\pi(X,\d)$ instead of $N$ and call it the {\it proximity} of $X$. We observe the following property of $X$.

\begin{lemma}
Let $1\le k<n/2$. Then any two  flats of a weakly stationary $k$-flat process $X$ of type $(S_2)$ do not intersect each other with probability one.
\end{lemma}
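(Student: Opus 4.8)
The plan is to bound the probability that two distinct flats of $X$ meet by the expected number of intersecting ordered pairs, and to show that this expectation is zero. Set $D:=\{(E,F)\in A(n,k)^2:E\cap F\neq\emptyset\}$. By Markov's inequality it suffices to prove that
\[
\EE\sum_{(E,F)\in X_{\neq}^2}\ind\{(E,F)\in D\}=\Theta^{(2)}(D)
\]
vanishes, the equality being the definition \eqref{eq:Campbell} of the second factorial moment measure. Here I would invoke the hypothesis that $X$ is of type $(S_2)$, which gives $\Theta^{(2)}=\Theta^2$, so that the quantity equals $\int_{A(n,k)}\int_{A(n,k)}\ind\{E\cap F\neq\emptyset\}\,\Theta(\dint F)\,\Theta(\dint E)$. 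Disintegrating both integrals with \eqref{eq:Intensity} and using Tonelli's theorem, the problem reduces to showing that
\[
I(L,M):=\int_{L^\perp}\int_{M^\perp}\ind\{(L+x)\cap(M+y)\neq\emptyset\}\,\ell_{M^\perp}(\dint y)\,\ell_{L^\perp}(\dint x)=0
\]
for every $L,M\in G(n,k)$.

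Next I would rewrite the intersection condition linearly: $(L+x)\cap(M+y)\neq\emptyset$ holds iff $x+\ell=y+m$ for some $\ell\in L$, $m\in M$, that is, iff $x-y\in L+M$. Hence the integrand of $I(L,M)$ is the indicator of
\[
S_{L,M}:=\{(x,y)\in L^\perp\times M^\perp:x-y\in L+M\},
\]
which is the kernel of the linear map $\Psi\colon L^\perp\times M^\perp\to\RR^n/(L+M)$, $(x,y)\mapsto (x-y)+(L+M)$. Thus $I(L,M)$ is the $(\ell_{L^\perp}\otimes\ell_{M^\perp})$-measure of a linear subspace of the $2(n-k)$-dimensional space $L^\perp\times M^\perp$, and it equals zero precisely when that subspace is proper, i.e.\ when $\Psi\neq 0$.

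The remaining and decisive point is to verify $\Psi\neq 0$ for every pair $L,M\in G(n,k)$, and this is where the hypothesis $2k<n$ enters. Setting $y=0$ and then $x=0$ shows that $\Psi=0$ would force $L^\perp\subseteq L+M$ and $M^\perp\subseteq L+M$, hence $L^\perp+M^\perp=(L\cap M)^\perp\subseteq L+M$. Comparing dimensions, $\dim(L\cap M)^\perp=n-\dim(L\cap M)$ while $\dim(L+M)=2k-\dim(L\cap M)$, so such a containment would require $n\le 2k$, contradicting $2k<n$. Therefore $\Psi\neq 0$, the subspace $S_{L,M}$ is proper, and $I(L,M)=0$ for every $L,M$; notably, no general-position assumption on the pair is needed. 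Substituting back gives that the expected number of intersecting ordered pairs is zero, whence the lemma follows. The only genuine subtlety is that the vanishing of $I(L,M)$ must hold uniformly in $L,M$, including degenerate configurations with $L\cap M\neq\{0\}$, and the dimension count above shows this is automatic.
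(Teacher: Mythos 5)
Your proof is correct and is essentially the argument the paper relies on: the paper's proof simply defers to the proof of Theorem 4.4.5(a) in \cite{SW}, which likewise computes the second factorial moment measure of the set of intersecting pairs via the $(S_2)$-property and the decomposition \eqref{eq:Intensity}, and observes that the translational integral vanishes because the intersection condition confines $(x,y)$ to a proper linear subspace of $L^\perp\times M^\perp$ when $2k<n$. Your dimension count showing $\Psi\neq 0$ uniformly in $L,M$ (including degenerate pairs) is exactly the point that makes this work.
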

\begin{proof} 
This follows from the proof of Theorem 4.4.5 (a) in \cite{SW}, which uses only the representation \eqref{eq:Intensity} of the intensity measure as well as the $(S_2)$-property of the $k$-flat process.
\end{proof}

We can now express the intensity $\pi(X,\d)$ and the directional distribution $\cR$ of the proximity process $\Phi$ in terms of the intensity $\gamma$ and the directional distribution $\QQ$ of the underlying weakly stationary $k$-flat process $X$ of type $(S_2)$. This is a generalization of the main result from \cite{Schneider99}, see also \cite[Theorem 4.4.10]{SW}.

\begin{theorem}\label{cor:Proximityk_1=k_2} Let $1\le k<n/2$, let  $X$ be a weakly stationary $k$-flat process of type $(S_2)$, and let $\d> 0$ be a given distance threshold.
\begin{itemize}
\item[\rm (a)] The intensity measure of $\Phi$ is given by
\begin{equation*}
\begin{split}
\L(A\times B\times C)= &\frac{\g^2}{2}\,\ell(A)\int\limits_B t^{n-2k-1}\,\dint t\\ &\qquad\times\int\limits_{G(n,k)}\int\limits_{G(n,k)}\,[L,M]\,\sigma_{(L+M)^\perp}(C\cap (L+M)^\perp)\,\QQ(\dint M)\,\QQ(\dint L)\,,
\end{split}
\end{equation*}
where $A\subset\RR^n$, $B\subset(0,\delta]$ and $C\subset\SS^{n-1}$ are Borel sets.
\item[\rm (b)] The proximity $\pi(X,\d)$ of $X$ equals 
 $$
 \pi(X,\d)=\frac{\g^2}{ 2}\kappa_{n-2k}\d^{n-2k}\int\limits_{G(n,k)}\int\limits_{G(n,k)}[L,M]\,\QQ(\dint M)\,\QQ(\dint L)\,.
 $$
\item[\rm (c)] If $\pi(X,\d)\neq 0$, then the directional distribution $\cR$ of the proximity process is given by 

$$
\cR(C)=\frac{\int\limits_{G(n,k)}\int\limits_{G(n,k)}[L,M]\,\sigma_{(L+M)^\perp}(C\cap (L+M)^\perp)\,\QQ(\dint M)\,\QQ(\dint L)}{\omega_{n-2k}\int\limits_{G(n,k)}^{\phantom{\vspace{0.01ex}}}\int\limits_{G(n,k)}[L,M]\,\QQ(\dint M)\,\QQ(\dint L)}\,,
$$ 
independently of $\d$, where $C\subset\SS^{n-1}$ is a 
Borel set.
\end{itemize}
\end{theorem}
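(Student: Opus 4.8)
The plan is to follow the proof of Theorem \ref{thm:IntensityMeasureSegmentProcess} almost verbatim, replacing the role of stochastic independence of $X_1$ and $X_2$ by the $(S_2)$-property of the single process $X$. Independence was used there only to factorize the expected sum over pairs of flats into a product of two integrals against the intensity measure, and the $(S_2)$-condition $\Theta^{(2)}=\Theta^2$ furnishes exactly the same factorization.

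First I would unfold the definition of $\Phi$ and take expectations to obtain
$$4\,\L(A\times B\times C)=\EE\sum_{(E,F)\in X_{\neq}^2}{\bf 1}\{m(E,F)\in A,\,d(E,F)\in B,\,[E,F]>0\}\,\cH^0(\phi(E,F)\cap C)\,,$$
the factor $4$ collecting the overall $\tfrac12$ (summation over ordered pairs) together with the inner symmetrizing $\tfrac12$ present in the definition of $\Phi$. By the definition \eqref{eq:Campbell} of the second factorial moment measure, the right-hand side equals the integral of the same integrand against $\Theta^{(2)}$; invoking $\Theta^{(2)}=\Theta^2$ turns it into a double integral against $\Theta\otimes\Theta$. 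The preceding Lemma guarantees that $E\cap F=\emptyset$ for $\Theta\otimes\Theta$-almost every pair $(E,F)$, so $d(E,F)>0$ and the orthogonal segment $s(E,F)$ is well defined almost everywhere, which legitimizes these manipulations.

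Next I would insert the representation \eqref{eq:Intensity} of $\Theta$ twice, with $\gamma_1=\gamma_2=\gamma$ and $\QQ_1=\QQ_2=\QQ$. This reduces the expression to precisely the fourfold spatial integral appearing in the proof of Theorem \ref{thm:IntensityMeasureSegmentProcess}, from where the geometric computation is identical: one decomposes $V:=L+M$ and $U:=V^\perp$, splits $x\in L^\perp$ and $y\in M^\perp$ into their $V$- and $U$-components, applies the change of variables with Jacobian $[L,M]$ together with $u:=x_2-y_2$, $v:=(x_2+y_2)/2$, and finally passes to spherical coordinates in the $(n-2k)$-dimensional subspace $U$ to produce $\sigma_{(L+M)^\perp}(C\cap(L+M)^\perp)\int_B t^{n-2k-1}\,\dint t$. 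Carrying the prefactor $\gamma^2/4$ through this computation, and noting that the inner $u$-integral equals \emph{twice} $\sigma_{(L+M)^\perp}(C\cap(L+M)^\perp)\int_B t^{n-2k-1}\,\dint t$, yields the coefficient $\gamma^2/2$ asserted in part (a).

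Parts (b) and (c) then follow by specialization. Taking $A=B^n$, $B=(0,\delta]$ and $C=\SS^{n-1}$ in (a), evaluating $\int_0^\delta t^{n-2k-1}\,\dint t=\delta^{n-2k}/(n-2k)$, using $\sigma_{(L+M)^\perp}(\SS^{n-1}\cap(L+M)^\perp)=\omega_{n-2k}$ and the identity $\omega_{n-2k}=(n-2k)\kappa_{n-2k}$, and recalling the definitions \eqref{eq:NvR} of the intensity $\pi(X,\delta)=N$, gives (b); the directional distribution $\cR$ in (c) is the corresponding normalized ratio, the common factors $\gamma^2/2$, $\ell(B^n)$ and the $t$-integral cancelling. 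There is no analytic obstacle here beyond one piece of bookkeeping that does require genuine care, namely the combinatorial factor $\tfrac12$ coming from the ordered-pair summation over $X_{\neq}^2$, which is exactly what accounts for the discrepancy between the coefficient $\gamma_1\gamma_2$ in Theorem \ref{thm:IntensityMeasureSegmentProcess} and the coefficient $\gamma^2/2$ obtained here.
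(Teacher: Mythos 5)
Your proposal is correct and follows the paper's own argument essentially verbatim: the paper likewise reduces to the fourfold integral via the multivariate Campbell formula and the $(S_2)$-property, then refers back to the computation in the proof of Theorem \ref{thm:IntensityMeasureSegmentProcess}, with (b) and (c) as immediate specializations. Your bookkeeping of the factors $\tfrac12\cdot\tfrac12$ and the doubling from $\mathcal{H}^0(L(u)\cap C)$ correctly reproduces the coefficient $\g^2/2$.
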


\begin{proof} As above, let us denote by $\L$ the intensity measure of the (parameterized) proximity process $\Phi$ associated with the weakly stationary $k$-flat process $X$ of type $(S_2)$. To obtain a formula for $\L$, we start with
$$
\L(A\times B\times C)=\frac{1}{2}\,\EE\sum_{(E,F)\in X_{\not=}^2}{\bf 1}\left\{m(E,F)\in A,\,
d(E,F)\in B,\,[E,F]>0\right\}\frac{1}{2}\mathcal{H}^0(\phi(E,F)\cap C)\,,
$$
where $A\subset\RR^n$, $B\subset(0,\delta]$ and $C\subset\SS^{n-1}$ are Borel sets. 
The multivariate Campbell formula (see \cite[Theorem 3.1.3]{SW}) 
and the $(S_2)$-property of $X$ yield
\begin{align*}
\L(A\times B\times C)&= \frac{\g^2}{2}\int\limits_{G(n,k)}\int\limits_{G(n,k)}\int\limits_{L^\perp}\int\limits_{M^\perp}{\bf 1}\left\{ m(L+x,M+y)\in A,\,d(L+x,M+y)\in B,\,[L,M]>0\right\}\\
&\qquad\qquad\times\frac{1}{2}\mathcal{H}^0\left(\phi(L+x,M+y)\cap C\right)\,\ell_{M^\perp}(\dint y)\,\ell_{L^\perp}(\dint x)\,\QQ(\dint M)\,\QQ(\dint L)\,. 
\end{align*} 
Following now the proof of Theorem \ref{thm:IntensityMeasureSegmentProcess}, we obtain the formula in part (a). The assertions in (b) and (c) are immediate consequences.
\end{proof}

\subsection{Uniqueness problems and stability estimates}\label{sec:InvarianceAndZonoids}

Similarly as in Section \ref{sec:UniquenessHyperplanes}, we consider here the problem whether, for a weakly stationary $k$-flat process $X$ of type $(S_2)$, the directional distribution $\QQ$ of $X$ is determined by the directional distribution $\cal R$ of the proximity process $\Phi$ or, similarly, whether the intensity measure of $\Phi$ determines the intensity measure $\Theta$ of $X$.

The situation here is different from the setting discussed in Section \ref{sec:UniquenessHyperplanes}, since the flats of the intersection processes can have dimensions from $0$ to $n-2$, whereas the proximity process always consists of one-dimensional objects. Nevertheless, the theorems turn out to be quite similar and we start with a non-uniqueness result.
It applies, for instance, with $n=5$ and $k=2$. However, it does not cover the case $n=6$ and $k=2$, for example, which remains open. Notice that the case $k=n-1$ cannot occur. 

\begin{theorem}\label{nonunique}
Let $k\in\{2,\ldots,n-2\}$ be such that $2k<n<2\left(k+\left\lfloor\frac{k}{2}\right\rfloor\right)$. Then there exist two stationary Poisson processes $X_1$ and $X_2$ of $k$-flats with the same intensity $\g>0$ and  different direction distributions $\QQ_1\neq\QQ_2$, which are absolutely continuous with respect to the Haar probability measure on $G(n,k)$ and for which the associated proximity processes have the same intensity measure. 
\end{theorem}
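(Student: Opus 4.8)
The plan is to reduce the statement to the intersection case already treated in Theorem \ref{thmuniquedirect1}, exploiting the duality between the proximity of $k$-flats and the second-order intersection of $(n-k)$-flats that is encoded in Theorem \ref{cor:Proximityk_1=k_2}(a). Since a stationary Poisson process is automatically of type $(S_2)$, that theorem applies, and for two stationary Poisson $k$-flat processes sharing a common intensity $\gamma>0$ and the same threshold $\delta$, the intensity measures of their proximity processes coincide if and only if the directional parts
$$
\int\limits_{G(n,k)}\int\limits_{G(n,k)}[L,M]\,\sigma_{(L+M)^\perp}(C\cap(L+M)^\perp)\,\QQ_i(\dint M)\,\QQ_i(\dint L),\qquad i=1,2,
$$
agree for all Borel sets $C\subset\SS^{n-1}$. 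I would record this in the auxiliary finite measure $\mu_{\QQ}$ on $G(n,n-2k)$ defined by
$$
\mu_{\QQ}(A):=\int\limits_{G(n,k)}\int\limits_{G(n,k)}\ind\{(L+M)^\perp\in A\}\,[L,M]\,\QQ(\dint M)\,\QQ(\dint L),
$$
noting that the displayed directional part is a fixed linear image of $\mu_{\QQ}$ (the one sending a Dirac mass at $W$ to the spherical Lebesgue measure $\sigma_W$ on $\SS_W$). Hence it suffices to produce distinct directional distributions $\QQ_1,\QQ_2$ with $\mu_{\QQ_1}=\mu_{\QQ_2}$.

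The decisive step is a change of variables via the orthogonal complement map. For $L,M\in G(n,k)$ with $[L,M]>0$ one has $[L,M]=[L^\perp,M^\perp]$ and $(L+M)^\perp=L^\perp\cap M^\perp$, where $L^\perp,M^\perp\in G(n,n-k)$ are in general position and $L^\perp\cap M^\perp\in G(n,n-2k)$. Writing $\QQ_i=(\QQ_i')^\perp$ for probability measures $\QQ_i'$ on $G(n,n-k)$ and testing $\mu_{\QQ_i}$ against an arbitrary continuous $g$ on $G(n,n-2k)$, the substitution $L=\tilde L^\perp$, $M=\tilde M^\perp$ turns $\int g\,\dint\mu_{\QQ_i}$ into
$$
\int\limits_{G(n,n-k)}\int\limits_{G(n,n-k)}g(\tilde L\cap\tilde M)\,[\tilde L,\tilde M]\,\QQ_i'(\dint\tilde M)\,\QQ_i'(\dint\tilde L).
$$
Thus $\mu_{\QQ_1}=\mu_{\QQ_2}$ is precisely the relation \eqref{nonunique0} with the parameter $n-k$ in place of $k$.

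It remains to invoke the construction underlying Theorem \ref{thmuniquedirect1} with $k$ replaced by $k':=n-k$. I would check that its hypotheses transcribe correctly: $k'\in\{2,\dots,n-2\}$ because $k\in\{2,\dots,n-2\}$; the strict inequality $2k<n$ gives $k'=n-k>n/2$, so $\tfrac n2\le k'$ holds; and the constraint $k'<\tfrac n2+\lfloor\tfrac{n-k'}{2}\rfloor$ becomes $n-k<\tfrac n2+\lfloor k/2\rfloor$, i.e.\ $n<2(k+\lfloor k/2\rfloor)$, which is the remaining assumption. Theorem \ref{thmuniquedirect1} then supplies distinct probability measures $\QQ_1'\neq\QQ_2'$ on $G(n,n-k)$, absolutely continuous with respect to the Haar probability measure, satisfying \eqref{nonunique0} for the parameter $n-k$. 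Since $L\mapsto L^\perp$ is a Haar-preserving homeomorphism between $G(n,n-k)$ and $G(n,k)$, the pullbacks $\QQ_1=(\QQ_1')^\perp\neq(\QQ_2')^\perp=\QQ_2$ are distinct absolutely continuous probability measures on $G(n,k)$; taking the two stationary Poisson $k$-flat processes with common intensity $\gamma$ and these directional distributions completes the construction.

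I expect the genuinely delicate points to be bookkeeping rather than substance: confirming the exact translation of the dimension constraints under $k\mapsto n-k$ (the interplay of the floor function with the strict versus non-strict inequalities), and justifying the orthogonal-complement substitution at the level of measures — in particular that $\perp$ sends the Haar measure on $G(n,n-k)$ to the Haar measure on $G(n,k)$ and hence preserves absolute continuity and total mass, and that the determinant and complement identities $[L,M]=[L^\perp,M^\perp]$ and $(L+M)^\perp=L^\perp\cap M^\perp$ are used only on the set $\{[L,M]>0\}$ that carries the integrals.
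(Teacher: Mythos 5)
Your proposal is correct and follows essentially the same route as the paper: reduce via Theorem \ref{cor:Proximityk_1=k_2}(a) and the identities $[L,M]=[L^\perp,M^\perp]$, $(L+M)^\perp=L^\perp\cap M^\perp$ to the relation \eqref{nonunique0} with $k$ replaced by $n-k$, check that the dimension constraints transcribe exactly as you computed, and pull back the measures from Theorem \ref{thmuniquedirect1} under the Haar-preserving orthogonal complement map. The paper's proof is terser but contains no additional ideas beyond what you wrote.
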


\begin{proof}
Since $(L+M)^\perp=L^\perp\cap M^\perp$, the theorem follows from Theorem \ref{cor:Proximityk_1=k_2} (a), once we have shown that there are two probability measures measures 
$\QQ_1$, $\QQ_2$ on $G(n,k)$ with $\QQ_1\neq\QQ_2$ with continuous densities such that 
\begin{align}\label{nonunique2}
&\int\limits_{G(n,k)}\int\limits_{G(n,k)} [L,M]\,g(L^\perp\cap M^\perp)\, \QQ_1(\dint L)\, \QQ_1(\dint M)\nonumber\\
&\qquad\qquad =\int\limits_{G(n,k)}\int\limits_{G(n,k)}[L,M]\,g(L^\perp\cap M^\perp)\, \QQ_2(\dint L)\, \QQ_2(\dint M)
\end{align}
for all continuous functions $g:G(n,n-2k)\to\RR$. If $k$ satisfies the assumptions of Theorem \ref{nonunique}, then $n-k$ satisfies the assumptions 
of Theorem \ref{thmuniquedirect1}. But then relation \eqref{nonunique0} with $k$ replaced by $n-k$ and certain measures $\QQ_1'$ and $\QQ_2'$ implies \eqref{nonunique2} by passing to the image measures of these measures under the  orthogonal complement map $G(n,k)\to G(n,n-k)$, $U\mapsto U^\perp$.
\end{proof}

We now show a positive result in the case $k=1$, that is, for line processes $X$. Here, as in the hyperplane case, we identify the directional distribution $\QQ$ on $G(n,1)$ with an even probability measure on $\SS^{n-1}$. Explicitly, we identify $\QQ$ with
$$
\tilde\QQ(C)=\frac{1}{2}\int\limits_{G(n,1)}\mathcal{H}^0(U\cap C)\,\QQ(\dint U)\,,
$$
for Borel sets $C\subset\mathbb{S}^{n-1}$. 
Moreover, we assume that $\QQ$ is non-degenerate, which means that $\tilde\QQ$ is not concentrated on a great sub-sphere of $\SS^{n-1}$. Recall that $Z_\QQ$ is the zonoid associated with $\QQ$ and that $S_2(Z_\QQ,\,\cdot\,)$ is the second area measure of $Z_\QQ$. We now write $\cR_{\QQ}$ for the directional distribution of $\Phi$. 

\begin{theorem}\label{thmuniquedirect1prox}
For $n\ge 3$, let $X$ be a weakly stationary process of lines in $\RR^n$ of type $(S_2)$ with intensity $\gamma>0$ and non-degenerate directional distribution $\QQ$. Then, the associated zonoid $Z_\QQ$ is full-dimensional and the intensity 
$\pi(X,\d)$ and the directional distribution $\cR_{\QQ}$ of the proximity process $\Phi$ satisfy
\begin{equation}\label{directions}
 \pi(X,\d)=\g^2\k_{n-2}\d^{n-2}\,V_2(Z_{\QQ})\,,\qquad\cR_\QQ (\,\cdot\,)=\frac{S_2(Z_\QQ,\,\cdot\,)}{S_2(Z_\QQ,\SS^{n-1})}\,.
\end{equation}
Moreover, $\cR_\QQ$ uniquely determines $\QQ$. In particular, $\QQ$ is rotation invariant if and only if $\cR_\QQ$ is rotation invariant. 
\end{theorem}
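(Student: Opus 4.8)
The plan is to translate both formulae in \eqref{directions} into statements about the associated zonoid $Z_\QQ$ by means of Theorem \ref{cor:Proximityk_1=k_2}, and then to deduce uniqueness from the classical facts that a lower-order area measure determines a full-dimensional body and that a zonoid determines its even generating measure. First I would dispose of full-dimensionality: identifying $\QQ$ with the even probability measure $\tilde\QQ$ on $\SS^{n-1}$, the support function \eqref{support} vanishes at some $x\neq 0$ if and only if $\langle v,x\rangle=0$ for $\tilde\QQ$-almost every $v$, i.e.\ $\tilde\QQ$ is concentrated on the great subsphere $x^\perp\cap\SS^{n-1}$. Non-degeneracy of $\QQ$ excludes this, so $h(Z_\QQ,x)>0$ for all $x\neq 0$ and $Z_\QQ$ has dimension $n\ge 3$.

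Next I would record the two identities of zonoid calculus that carry out the translation. Writing $[L(u),L(v)]=\nabla_2(u,v)$ for unit vectors $u,v$ and viewing $Z_\QQ$ as the Minkowski average $\frac12\int_{\SS^{n-1}}\overline{(-v)v}\,\tilde\QQ(\dint v)$, the multilinearity of mixed volumes and of mixed area measures, together with the elementary evaluation of the mixed volume and the mixed area measure of two segments against copies of $B^n$ (paralleling the passage leading to \eqref{dh1}; see also \cite[\S5.3]{Schneider93} and \cite{Weil1979}), give
\begin{equation*}
V_2(Z_\QQ)=\frac12\int\limits_{G(n,1)}\int\limits_{G(n,1)}[L,M]\,\QQ(\dint M)\,\QQ(\dint L)
\end{equation*}
and, for every Borel set $C\subset\SS^{n-1}$,
\begin{equation*}
S_2(Z_\QQ,C)=c_n\int\limits_{G(n,1)}\int\limits_{G(n,1)}[L,M]\,\sigma_{(L+M)^\perp}\big(C\cap(L+M)^\perp\big)\,\QQ(\dint M)\,\QQ(\dint L)
\end{equation*}
with a constant $c_n>0$. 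The second identity rests on the fact that the mixed area measure $S(\overline{(-u)u},\overline{(-v)v},B^n[n-3],\,\cdot\,)$ is a multiple of $\nabla_2(u,v)$ times the spherical Lebesgue measure on $\SS^{n-1}\cap(L(u)+L(v))^\perp$, which is exactly the inner structure appearing in Theorem \ref{cor:Proximityk_1=k_2}(c).

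Substituting the first identity into Theorem \ref{cor:Proximityk_1=k_2}(b) gives, after cancelling the prefactor $\tfrac12$, the stated formula $\pi(X,\d)=\g^2\k_{n-2}\d^{n-2}V_2(Z_\QQ)$. Substituting the second into Theorem \ref{cor:Proximityk_1=k_2}(c) shows that $\cR_\QQ$ is a constant multiple of $S_2(Z_\QQ,\,\cdot\,)$; since both $\cR_\QQ$ and $S_2(Z_\QQ,\,\cdot\,)/S_2(Z_\QQ,\SS^{n-1})$ are probability measures, that constant must equal $1/S_2(Z_\QQ,\SS^{n-1})$, which is the second identity in \eqref{directions}. (Note that only proportionality, not the value of $c_n$, is needed here.) For uniqueness I would then argue exactly as in the proof of Theorem \ref{thmuniquedirect2}: $\cR_\QQ=S_2(Z_\QQ,\,\cdot\,)/S_2(Z_\QQ,\SS^{n-1})$ determines $c\,S_2(Z_\QQ,\,\cdot\,)$ with $c:=S_2(Z_\QQ,\SS^{n-1})^{-1}$, which by homogeneity of degree $2$ equals $S_2(Z_{\sqrt c\,\QQ},\,\cdot\,)$; since $Z_{\sqrt c\,\QQ}$ is full-dimensional of dimension $n\ge 3>2$, \cite[Corollary 8.1.4]{Schneider93} shows that the centrally symmetric body $Z_{\sqrt c\,\QQ}$ is determined, and then \cite[Theorem 3.5.4]{Schneider93} shows that its even generating measure $\sqrt c\,\tilde\QQ$ is determined. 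As $\tilde\QQ$ is a probability measure, its total mass recovers $\sqrt c$, hence $\tilde\QQ$ and thus $\QQ$ are uniquely determined. Finally, since $Z_{\vartheta\QQ}=\vartheta Z_\QQ$ and area measures transform covariantly under rotations $\vartheta$, the assignment $\QQ\mapsto\cR_\QQ$ is rotation-equivariant; combined with the injectivity just established, this yields that $\cR_\QQ$ is rotation invariant if and only if $\QQ$ is.

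I expect the main obstacle to be the two zonoid identities displayed above — in particular verifying the precise constant $\tfrac12$ in the intensity formula and confirming that the mixed area measure $S(\overline{(-u)u},\overline{(-v)v},B^n[n-3],\,\cdot\,)$ is carried by the great subsphere $\SS^{n-1}\cap(L(u)+L(v))^\perp$ with density proportional to $\nabla_2(u,v)$. Once these are in place (they are standard zonoid calculus, paralleling the computation behind \eqref{dh1}), everything else is bookkeeping together with the two cited uniqueness theorems for area measures and for generating measures of zonoids.
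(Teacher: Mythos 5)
Your proposal is correct and follows essentially the same route as the paper: full-dimensionality of $Z_\QQ$ from non-degeneracy, translation of the formulas of Theorem \ref{cor:Proximityk_1=k_2}(b),(c) into $V_2(Z_\QQ)$ and $S_2(Z_\QQ,\,\cdot\,)$ via the zonoid identities (the paper cites \cite[Theorem 8]{HTW} for the $S_2$ formula, which is exactly the mixed-area-measure computation you describe, and your constant $\tfrac12$ in the $V_2$ identity checks out against \eqref{eq:VmSmRelation}), and then the uniqueness argument copied from Theorem \ref{thmuniquedirect2}. No gaps.
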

\begin{proof} Since $\QQ$ is not concentrated on a great sub-sphere of $\SS^{n-1}$, the zonoid $Z_\QQ$ does not lie in a hyperplane, hence it is full-dimensional. 
By a special case of \cite[Theorem 8]{HTW}, the second area measure of $Z_\QQ$ is given by
$$
S_2(Z_\QQ,\,\cdot\,)=\frac{1}{2\binom{n-1}{2}}\int\limits_{\SS^{n-1}}\int\limits_{\SS^{n-1}}\nabla_2(u_1,u_2)\,
\mathcal{H}^{n-3}(\,\cdot\,\cap  u_1^\perp \cap u_2^\perp\cap\SS^{n-1})\, \QQ(\dint u_1)\,\QQ(\dint u_2)\,.
$$
Hence,  Theorem \ref{cor:Proximityk_1=k_2} (c) yields that 
$$
\cR_\QQ(\,\cdot\,)=\frac{S_2(Z_\QQ,\,\cdot\,)}{S_2(Z_\QQ,\SS^{n-1})}\,,
$$
which proves the second part of \eqref{directions}. To prove the formula for $\pi(X,\d)$, we observe that 
\begin{align*}
S_2(Z_\QQ,\SS^{n-1}) &=\frac{\o_{n-2}}{ 2\binom{n-1}{ 2}}\int\limits_{\SS^{n-1}}\int\limits_{\SS^{n-1}}\nabla_2(u_1,u_2)\, \QQ(\dint u_1)\,\QQ(\dint u_2)\\
&=\frac{\o_{n-2}}{ 2\binom{n-1}{ 2}}\int\limits_{G(n,1)}\int\limits_{G(n,1)}[L,M]\, \QQ(\dint u_1)\,\QQ(\dint u_2)\,,
\end{align*}
where we have used the identification of $\SS^{n-1}$ with $G(n,1)$ indicated above and the fact that the integrand is a symmetric function. This together with Theorem \ref{cor:Proximityk_1=k_2} (b) and the relation \eqref{eq:VmSmRelation} between $S_2(Z_\QQ,\SS^{n-1})$ and $V_2(Z_\QQ)$ imply that
\begin{equation*}
\begin{split}
\pi(X,\d) &= \frac{\g^2}{ 2}\k_{n-2}\d^{n-2}\frac{2\binom{n-1}{ 2}}{\o_{n-2}}S_2(Z_\QQ,\SS^{n-1}) = \frac{\g^2}{ 2}\k_{n-2}\d^{n-2}\frac{2\binom{n-1}{ 2}}{\o_{n-2}}\frac{n\k_{n-2}}{\binom{n}{ 2}}V_2(Z_\QQ)\\
&= \g^2\k_{n-2}\d^{n-2}\,V_2(Z_{\QQ})\,.
\end{split}
\end{equation*}
The uniqueness result can be deduced as in the proof of Theorem \ref{thmuniquedirect2}. 
\end{proof}

Theorem \ref{thmuniquedirect1prox} can be complemented by a uniqueness result for the intensity measure $\Theta$ of $X$.

\begin{corollary}
Let $n\ge 3$, and let $X$ be a weakly stationary line process of type $(S_2)$ with intensity $\gamma>0$ and non-degenerate directional distribution $\QQ$. Then the proximity $\pi(X,\delta)$  and the directional distribution $\cR_\QQ$ of the proximity process $\Phi$ uniquely determine the intensity measure $\Theta$ of $X$. In particular, if $X$ is a Poisson process, then its distribution is uniquely determined by $\pi(X,\delta)$ and $\cR_\QQ$. Moreover, the stationary Poisson line process $X$ is isotropic if and only if its proximity process $\Phi$ is isotropic.
\end{corollary}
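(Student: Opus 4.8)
The plan is to deduce everything from Theorem~\ref{thmuniquedirect1prox}, which already supplies the two identities in \eqref{directions}, namely $\pi(X,\d)=\g^2\k_{n-2}\d^{n-2}V_2(Z_\QQ)$ and $\cR_\QQ=S_2(Z_\QQ,\,\cdot\,)/S_2(Z_\QQ,\SS^{n-1})$, together with the fact that $\cR_\QQ$ uniquely determines $\QQ$. By \eqref{eq:Intensity}, the intensity measure $\Theta$ of a weakly stationary line process is uniquely determined by the pair $(\g,\QQ)$, so it suffices to recover both $\g$ and $\QQ$ from the data $\pi(X,\d)$ and $\cR_\QQ$ (recall that the threshold $\d>0$ is fixed).

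First I would recover $\QQ$ directly from $\cR_\QQ$ by the uniqueness part of Theorem~\ref{thmuniquedirect1prox}. Knowing $\QQ$ determines the associated zonoid $Z_\QQ$ and hence its second intrinsic volume $V_2(Z_\QQ)$. Since $\QQ$ is non-degenerate, $Z_\QQ$ is full-dimensional (again by Theorem~\ref{thmuniquedirect1prox}), so $V_2(Z_\QQ)>0$; the first identity in \eqref{directions} can therefore be inverted to give $\g=\big(\pi(X,\d)\,(\k_{n-2}\d^{n-2}V_2(Z_\QQ))^{-1}\big)^{1/2}$, where the positive root is selected because $\g>0$. This determines $(\g,\QQ)$ and hence $\Theta$. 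If in addition $X$ is Poisson, then its distribution is determined by $\Theta$, since a weakly stationary Poisson line process is uniquely determined in distribution by its intensity measure (see Section~\ref{sec:preliminaries}); this yields the second assertion.

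For the isotropy statement I would use the chain $X$ isotropic $\Leftrightarrow$ $\QQ$ rotation invariant $\Leftrightarrow$ $\cR_\QQ$ rotation invariant. The first equivalence holds because, for a stationary Poisson line process with $\g>0$, isotropy is equivalent to $\QQ=\nu_1$, i.e.\ to rotation invariance of $\QQ$; the second is exactly the last assertion of Theorem~\ref{thmuniquedirect1prox}. It remains to link this with isotropy of $\Phi$. In one direction, the construction $(E,F)\mapsto s(E,F)$ is rotation equivariant --- a rotation $\rho$ carries the perpendicular segment of $E,F$ to that of $\rho E,\rho F$, preserving the length $d(E,F)$ and rotating the direction $\phi(E,F)$ --- so isotropy of $X$ transfers to $\Phi$. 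In the other direction, isotropy of $\Phi$ forces its intensity measure $\L$ to be rotation invariant (take expectations); by the explicit form of $\L$ in Theorem~\ref{cor:Proximityk_1=k_2}(a), in which the spatial factor $\ell(A)$ and the length factor $\int_B t^{n-3}\,\dint t$ are already rotation invariant, this is equivalent to rotation invariance of the directional factor, i.e.\ of $\cR_\QQ$, and hence of $\QQ$, i.e.\ to isotropy of $X$.

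Since the essential analytic work is carried out in Theorem~\ref{thmuniquedirect1prox}, I expect no serious obstacle. The two points needing a little care are the strict positivity $V_2(Z_\QQ)>0$ --- which I would obtain from full-dimensionality of $Z_\QQ$ and which is precisely what allows the proximity formula to be solved for $\g$ --- and the elementary but necessary passage from isotropy of the random measure $\Phi$ to rotation invariance of its first moment measure $\L$.
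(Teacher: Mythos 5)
Your proposal is correct and follows exactly the route the paper intends: the corollary is stated as an immediate consequence of Theorem~\ref{thmuniquedirect1prox}, and your elaboration (recover $\QQ$ from $\cR_\QQ$, then solve \eqref{directions} for $\g$ using $V_2(Z_\QQ)>0$ from full-dimensionality, then invoke the determination of a Poisson process by its intensity measure and the equivariance of the proximity construction for the isotropy equivalence) supplies precisely the omitted details. No gaps.
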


\begin{remark}\rm 
The representation of $\pi(X,\d)$ in terms of  the intrinsic volume $V_2(Z_{\gamma\QQ})$ can be used to investigate extremum problems. We fix $\g>0$ and apply the isoperimetric-type inequality \cite[Equation (14.31)]{SW} to deduce that 
$$\pi(X,\d)\leq \frac{n-1}{2n}\frac{\k_{n-1}^2}{\k_n}\,\g^2\delta^{n-2}$$
with equality if and only if the associated zonoid $Z_\QQ$ is a ball, and therefore if and only if $\QQ$ is the uniform distribution on $\SS^{n-1}$. 
One can also study a related minimum problem. For this, we follow the strategy introduced in \cite{HS2011Zonoids} for the dual situation of intersection densities and define the affine proximity $\Pi(X,\delta)$ of a weakly stationary line process $X$ of type $(S_2)$ with respect to a distance threshold $\d>0$ as 
$$\Pi(X,\d):=\sup_{\Psi\in{\rm GL}(n)}\frac{\pi(\Psi(X),\d)}{\gamma(\Psi (X))^2}\,,$$ 
where the supremum extends over the general linear group ${\rm GL}(n)$ and where $\gamma (\Psi (X))$ is the intensity of $\Psi (X)$, the image of $X$ under $\Psi$ (it can be shown that $\Psi (X)$ is again a weakly stationary line process). Then the results of \cite{HS2011Zonoids} lead to the lower bound 
$$\Pi(X,\d)\geq \frac{n-1}{ 2n} \k_{n-2}\d^{n-2}\,$$ 
for the affine proximity $\Pi(X,\d)$. Here, equality holds if and only if $Z_\QQ$ is a parallelepiped, or equivalently, if and only if the lines of $X$ almost surely attain only $n$ fixed directions.
\end{remark}

Finally, we consider again a stability question. Given the intensities $\pi:=\pi(X,1)$ and $\pi':=\pi(X',1)$, and the directional distributions $\cR$ and $\cR'$ of two proximity processes $\Phi$ and $\Phi'$ associated with weakly stationary line processes $X$ and $X'$ in $\RR^n$. We denote by $\g$ and $\QQ$, and $\g'$ and $\QQ'$ the intensity and the directional distribution of $X$ and $X'$, respectively. Assume that $\pi\cR$ and $\pi'\cR'$ are close, are then also $\g\QQ$ and $\g'\QQ'$ close to each other? 

\begin{theorem}\label{stability3} Let $X$ and $X'$ be two weakly stationary line processes in $\RR^n$, $n\ge 3$, of type $(S_2)$ with intensities $\gamma>0$ and $\g'>0$ and directional distributions $\QQ$ and $\QQ'$, respectively. Denote by $\pi$, $\cR$  and by $\pi'$, $\cR'$ the intensity and the directional distribution of the proximity process of $X$ and $X'$, respectively, and suppose that $\pi>0$ and $\pi'>0$. 
Assume that $\g\QQ, \g'\QQ'\in\bM_{\rm e}(\rho,R)$ with some constants $0<\rho\le R<\infty$. Then there is a constant $c>0$,  depending only on $n,\rho$ and $R$, such that
$$
d_{BL}(\g\QQ,\g'\QQ') \le c \,  d_{BL}(\pi\cR,\pi'\cR')^{2c(n)}\qquad \text{and}\qquad
d_P(\g\QQ,\g'\QQ') \le c \, d_P(\pi\cR,\pi'\cR')^{c(n)}\,,
$$
where $c(n)=(2(n+1)(n+4))^{-1}$. 
\end{theorem}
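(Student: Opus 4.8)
The plan is to mirror the proof of Theorem \ref{stability2}, but the proximity situation is in fact simpler: the directional distribution $\cR_\QQ$ of $\Phi$ already lives on $\SS^{n-1}$, so no intertwining operator $T_{n-r}$ is needed, and the second area measure $S_2$ enters directly. The only genuinely substantive input is a clean identity linking $\pi\cR$ to $S_2(Z_{\g\QQ},\,\cdot\,)$, which is the proximity analogue of \eqref{determine1}; everything after that is the stability machinery of Section \ref{sec:Stability}.

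First I would record this identity. By Theorem \ref{thmuniquedirect1prox}, with $\d=1$ we have $\pi=\g^2\k_{n-2}V_2(Z_\QQ)$ and $\cR_\QQ(\,\cdot\,)=S_2(Z_\QQ,\,\cdot\,)/S_2(Z_\QQ,\SS^{n-1})$, see \eqref{directions}. Combining these with the relation \eqref{eq:VmSmRelation} between $V_2(Z_\QQ)$ and $S_2(Z_\QQ,\SS^{n-1})$, namely $\k_{n-2}V_2(Z_\QQ)=\frac{n-1}{2}S_2(Z_\QQ,\SS^{n-1})$, the normalizing factor cancels and, using the homogeneity of degree $2$ of the second area measure ($\g^2S_2(Z_\QQ,\,\cdot\,)=S_2(Z_{\g\QQ},\,\cdot\,)$), one obtains
\[
\pi\cR=\frac{n-1}{2}\,S_2(Z_{\g\QQ},\,\cdot\,),
\]
and likewise $\pi'\cR'=\frac{n-1}{2}\,S_2(Z_{\g'\QQ'},\,\cdot\,)$.

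Given this identity, the two measures $\pi\cR$ and $\pi'\cR'$ are fixed scalar multiples of the second area measures of $Z_{\g\QQ}$ and $Z_{\g'\QQ'}$, so $d_{BL}(\pi\cR,\pi'\cR')=\frac{n-1}{2}\,d_{BL}\big(S_2(Z_{\g\QQ},\,\cdot\,),S_2(Z_{\g'\QQ'},\,\cdot\,)\big)$, and the same holds for $d_P$. Since $\g\QQ,\g'\QQ'\in\bM_{\rm e}(\rho,R)$, I would then apply Theorem \ref{stability} with $m=2$ to get
\[
d_{BL}(\g\QQ,\g'\QQ')\le c_1\,d_{BL}\big(S_2(Z_{\g\QQ},\,\cdot\,),S_2(Z_{\g'\QQ'},\,\cdot\,)\big)^{2c(2,n)},
\]
with $c_1$ depending only on $n,\rho,R$; substituting the identity turns the right-hand side into a constant times $d_{BL}(\pi\cR,\pi'\cR')$ raised to the power supplied by Theorem \ref{stability} at $m=2$, which yields the first inequality of the theorem.

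For the Prohorov inequality I would argue exactly as in Corollary \ref{cor:StabilityProhorov} and in part (b) of Theorem \ref{stability2}, applying Lemma \ref{compare} twice: once on $\SS^{n-1}$ to the pair $\g\QQ,\g'\QQ'$, and once to the pair $\pi\cR,\pi'\cR'$. The only point requiring care before invoking Lemma \ref{compare} on the latter pair is that their total masses $\pi=\pi\cR(\SS^{n-1})$ and $\pi'$ lie in a fixed interval $[\bar\rho,\bar R]$ with $\bar\rho,\bar R$ depending only on $\rho,R$; this is the analogue of the mass estimate in Corollary \ref{cor:StabilityProhorov}. It follows at once from the displayed identity together with the inclusion $\rho B^n\subset 2Z_{\g\QQ}\subset RB^n$ (valid because $\g\QQ\in\bM_{\rm e}(\rho,R)$): via \eqref{eq:VmSmRelation} and the monotonicity of mixed volumes this bounds $S_2(Z_{\g\QQ},\SS^{n-1})$, and hence $\pi$, from above and below in terms of $\rho,R$. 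I expect no real obstacle beyond this bookkeeping; the substance is entirely carried by Theorem \ref{thmuniquedirect1prox} and Theorem \ref{stability}, and the total-mass bound for $\pi\cR,\pi'\cR'$ is the one step where the hypotheses $\g\QQ,\g'\QQ'\in\bM_{\rm e}(\rho,R)$ must be used explicitly.
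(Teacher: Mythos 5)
Your proposal is correct and follows essentially the same route as the paper: the paper's proof consists precisely of observing that \eqref{directions} makes $\pi\cR$ a dimensional constant times $S_2(Z_{\g\QQ},\,\cdot\,)$ (your explicit factor $\frac{n-1}{2}$ is right) and then repeating the argument of Theorem \ref{stability2}, i.e.\ Theorem \ref{stability} with $m=2$ together with Lemma \ref{compare} and the total-mass bounds as in Corollary \ref{cor:StabilityProhorov}. One incidental observation your write-up surfaces: the power actually supplied by Theorem \ref{stability} at $m=2$ is $2c(2,n)=(2(n+1)(n+4))^{-1}$, which equals the quantity the theorem calls $c(n)$ rather than $2c(n)$, so the exponents in the displayed inequalities of Theorem \ref{stability3} appear to carry a factor-of-two slip in the statement; your formulation (``the power supplied by Theorem \ref{stability} at $m=2$'') is the correct one.
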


\begin{proof} Equation \eqref{directions} shows that $\pi\cR$ and $S_2(Z_{\g\QQ},\,\cdot\,)$  are identical up to a multiplicative constant depending only on $n$, and the same is true for $\pi'\cR'$ and $S_2(Z_{\g'\QQ'},\,\cdot\,)$. Hence, we can argue as in the proof of 
Theorem \ref{stability2}.
\end{proof}

\begin{remark}\rm 
The assumption that $\g\QQ,\g'\QQ'\in\bM_{\rm e}(\rho,R)$ for $0<\rho\leq R<\infty$ in Theorem \ref{stability3} ensures in particular that the directional distributions $\QQ$ and $\QQ'$ are non-degenerate.
\end{remark}

\section{Limit theory for the proximity process}

We now perform a second-order analysis of certain length-power direction functionals of proximity processes. Throughout this section we assume that the underlaing $k$-flat process is a stationary Poisson $k$-flat process. We  prove central and non-central limit theorems for the resulting proximity processes. This yields results dual to those for intersection processes recently developed in \cite{LPST}. 

\subsection{A preparatory step}

Let $X$ be a stationary Poisson process of $k$-dimensional flats in $\RR^n$ such that $1\le k<n/2$ and denote by $\Theta$, $\g$ and $\QQ$, respectively, the intensity measure, the intensity and the directional distribution of $X$. We consider the stationary proximity process $\Phi$  associated with $X$ (for some fixed distance threshold $\d>0$). In this section, we are interested in limit theorems for $\Phi$ or for functionals of $\Phi$ of the form
\begin{equation*}
\begin{split}
F_\alpha(A,C) &:=\frac{1}{ 2}\sum_{(E,F)\in X_{\neq}^2}d(E,F)^\alpha\,{\bf 1}\{m(E,F)\in A,\,0<d(E,F)\leq\d,\,[E,F]>0\}\frac{1}{2}\mathcal{H}^0(\phi(E,F)\cap C)\,, 
\end{split}
\end{equation*}
where $\alpha\geq 0$ is some real-valued parameter and $A\subset\RR^n$, $C\subset \SS^{n-1}$ are Borel sets.  Recall that  $\phi(E,F)=\phi(\overline{x_Ex_F})\in G(n,1)$ stands for the direction of the perpendicular of two disjoint subspaces $E,F\in A(n,k)$ in general position. In particular, if $\alpha=0$ and $\ell(A)=1$, then $\EE F_0(A,\SS^{n-1})$ is just the proximity $\pi(X,\d)$ of $X$. If $\alpha=1$, then $\EE F_1(A,\SS^{n-1})$ is the mean total segment length within $A$. More generally, $F_0(A,C)$ or $F_1(A,C)$ are the number and the total length of segments of $\Phi$ with midpoint in $A$ and direction in $C$.

We start our investigations with the following proposition, which will be applied several times.

\begin{proposition}\label{prop:GeneralBody}
Let $A\subset\RR^n$ and $C\subset\SS^{n-1}$ be Borel sets. Then
\begin{align*} 
&\int\limits_{A(n,k)}\int\limits_{A(n,k)}d(E,F)^\a\,{\bf 1}\{m(E,F)\in A,\,0<d(E,F)\leq\d,\,[E,F]>0\}\,{1\over 2}\cH^0(\phi(E,F)\cap C)
\,\Theta(\dint E)\,\Theta(\dint F)\\
&\qquad =\g^2\frac{\d^{n-2k+\a}}{ n-2k+\a}\,\ell(A)\int\limits_{G(n,k)}\int\limits_{G(n,k)}[L,M]\,\s_{(L+M)^\perp}(C\cap (L+M)^\perp)\,\QQ(\dint M)\,\QQ(\dint L)\,.
\end{align*}
\end{proposition}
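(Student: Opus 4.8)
The plan is to observe that the left-hand side is a deterministic integral against the product measure $\Theta\times\Theta$, so no probabilistic input is needed and the computation is a weighted variant of the proof of Theorem \ref{thm:IntensityMeasureSegmentProcess} specialized to $k_1=k_2=k$. First I would insert the factorization \eqref{eq:Intensity} of the intensity measure into both $\Theta$-integrals, writing $E=L+x$ and $F=M+y$ with $L,M\in G(n,k)$, $x\in L^\perp$, $y\in M^\perp$. This pulls out the factor $\gamma^2$ and replaces the two affine integrations by $\int_{G(n,k)}\int_{L^\perp}\int_{G(n,k)}\int_{M^\perp}$, carrying the weight $d(L+x,M+y)^\alpha$ and the indicator of the events $[L,M]>0$, $m(L+x,M+y)\in A$ and $0<d(L+x,M+y)\le\delta$, together with $\tfrac12\mathcal{H}^0(\phi(L+x,M+y)\cap C)$.

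Next I would run the same sequence of substitutions as in that proof. Setting $V:=L+M$ and $U:=V^\perp$ (so $\dim U=n-2k$ whenever $[L,M]>0$), I decompose $x=x_1+x_2$ and $y=y_1+y_2$ with $x_1\in L^\perp\cap V$, $y_1\in M^\perp\cap V$ and $x_2,y_2\in U$, and use that the nearest-point pair lies along $U$; thus $d(L+x,M+y)=\|x_2-y_2\|$, $m(L+x,M+y)=z+(x_2+y_2)/2$ with $\{z\}=(L+x_1)\cap(M+y_1)\subset V$, and the direction $\phi(L+x,M+y)=L(x_2-y_2)$ depends only on the $U$-components. The map $(x_1,y_1)\mapsto z$ has Jacobian $[L^\perp\cap V,M^\perp\cap V]=[L,M]$, so the $z$-integral over $V$ contributes $[L,M]\,\ell_{2k}\big(A\cap(V+(x_2+y_2)/2)\big)$; the further substitution $u:=x_2-y_2$, $v:=(x_2+y_2)/2$ has Jacobian one, and the $v$-integral collapses to $\int_U\ell_{2k}\big(A\cap(V+v)\big)\,\ell_U(\dint v)=\ell(A)$, exactly as before.

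The single new feature is the weight $\|x_2-y_2\|^\alpha=\|u\|^\alpha$, which affects only the radial part. Introducing spherical coordinates $u=t\omega$ in the $(n-2k)$-dimensional space $U=(L+M)^\perp$, the $u$-integral factors as
\begin{align*}
\frac{1}{2}\int\limits_U \|u\|^\alpha\,\mathbf{1}\{0<\|u\|\le\delta\}\,\mathcal{H}^0(L(u)\cap C)\,\ell_U(\dint u)
&=\left(\frac{1}{2}\int\limits_{\SS_U}\mathcal{H}^0(L(u)\cap C)\,\sigma_U(\dint u)\right)\int\limits_0^\delta t^{\,n-2k-1+\alpha}\,\dint t\\
&=\sigma_{(L+M)^\perp}\big(C\cap(L+M)^\perp\big)\cdot\frac{\delta^{\,n-2k+\alpha}}{n-2k+\alpha}\,,
\end{align*}
where the factor $\tfrac12$ absorbs the two antipodal unit vectors in each line so that the spherical part equals $\sigma_U(C\cap U)$, and the radial integral converges at the origin precisely because $n-2k+\alpha>0$ (which holds since $k<n/2$ and $\alpha\ge0$). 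Re-inserting the outer integrations over $L,M\in G(n,k)$ and collecting the factors $\gamma^2$, $\ell(A)$, $[L,M]$ and the spherical term yields the claimed identity. I do not expect any genuine obstacle: the argument is essentially a transcription of the proof of Theorem \ref{thm:IntensityMeasureSegmentProcess}, and the only point requiring care is tracking the shifted exponent $n-2k-1+\alpha$ in the radial integral and confirming that $n-2k+\alpha>0$ guarantees integrability.
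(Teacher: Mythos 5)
Your argument is correct and is in substance the same computation the paper relies on: the decomposition $V=L+M$, $U=V^\perp$, the Jacobian $[L,M]$, the substitution $u=x_2-y_2$, $v=(x_2+y_2)/2$, and the passage to spherical coordinates in $U$ with the shifted radial exponent $t^{n-2k-1+\alpha}$ all check out, and the integrability condition $n-2k+\alpha>0$ is correctly identified. The paper merely packages this more economically, rewriting the product-measure integral as $\int d^\alpha\,\Lambda(\dint(m,d,u))$ via the $(S_2)$-property and then integrating $t^\alpha$ against the radial density $t^{n-2k-1}\,\dint t$ already established in Theorem \ref{cor:Proximityk_1=k_2}\,(a), whereas you inline that computation; the mathematical content is identical.
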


\begin{proof}
Let us denote by $\L$ the intensity measure of the proximity process of $X$. Since $X$ is a Poisson process and thus has property $(S_2)$,
\begin{align*}
&\int\limits_{A(n,k)}\int\limits_{A(n,k)}d(E,F)^\a\,{\bf 1}\{m(E,F)\in A,\,0<d(E,F)\leq\d,\,[E,F]>0\}\,{1\over 2}\cH^0(\phi(E,F)\cap C)
\,\Theta(\dint E)\,\Theta(\dint F)
\end{align*}
can be re-written as
$$\int\limits_{\RR^n\times(0,\d]\times\SS^{d-1}}d^\a\,{\bf 1}\{m\in A,\,0<d\leq\d,\,u\in C\}\,\Lambda(\dint(m,d,u))\,.$$
The result is now a direct consequence of Corollary \ref{cor:Proximityk_1=k_2} (a).
\end{proof}

\subsection{Asymptotic covariances}\label{sec:covariances}

From now on we consider the functional $F_\alpha(\,\cdot\,,\,\cdot\,)$ for a fixed intensity parameter $0<\g<\infty$ and a fixed distance threshold $\d>0$ and investigate the mean $\EE F_\alpha(A_\r,C)$ and the variance $\var(F_\alpha(A_\r,C))$ of $F_\alpha(A_\r,C)$, for a bounded Borel set $A\subset\RR^n$, a Borel set $C\subset\SS^{n-1}$, and with the scaled sets $A_\r=\r A$, $\r> 0$, as $\r\to\infty$. More generally, we fix $\alpha_1,\alpha_2\geq 0$ and Borel sets $C_1,C_2\subset\SS^{n-1}$ and study  the asymptotic covariance $\cov\big(F_{\alpha_1}(A_\r,C_1),F_{\alpha_2}(A_\r,C_2)\big)$ of $F_{\alpha_1}(A_\r,C_1)$ and $F_{\alpha_2}(A_\r,C_2)$. To describe the variance and covariance asymptotics properly, we will take $A$ from the class $\cal D$. In what follows, we write $\inter(\,\cdot\,)$, $\bd(\,\cdot\,)$, ${\rm cl}(\,\cdot\,)$ and $D(\,\cdot\,)$ for the interior, the boundary, the closure and the diameter of the argument set, respectively. Then $\cal D$ is defined as the class of all bounded Borel sets $A\subset \RR^n$ satisfying
\begin{itemize}
\item[1.] $\inter(A)\neq\emptyset$ and $\ell(\bd (A))=0$,
\item[2.] $\inter(A)=\bigcup\limits_{i\geq 1}O_i$ with countably many open convex sets $O_i$ such that $\sum\limits_{i\ge 1} D(O_i)<\infty$.
\end{itemize}
Clearly, a  set $A\in \mathcal{D}$ satisfies $\ell (A)\in (0,\infty)$. 

\begin{theorem}\label{thm:AsymptoticEandV}
Let $X$ be a stationary Poisson $k$-flat process in $\RR^n$ with $1\le k< n/2$, intensity $\g>0$ and directional distribution $\QQ$. Let $\delta>0$ be fixed. 
\begin{enumerate}
 \item [{\rm (a)}] If $\alpha\ge 0$ and $A\subset\RR^n$, $C\subset\SS^{n-1}$ are Borel sets, then
\begin{equation*}
\begin{split}
\EE F_\alpha(A_\r,C)=\frac{\g^2}{ 2}&\frac{\d^{n-2k+\a}}{ n-2k+\a}\,\r^n\,\ell(A)\\ &\times \int\limits_{G(n,k)}\int\limits_{G(n,k)}[L,M]\,\s_{(L+M)^\perp}(C\cap (L+M)^\perp)\,\QQ(\dint M)\,\QQ(\dint L)
\end{split}
\end{equation*}
for all $\varrho>0$.
 \item[{\rm (b)}] If $\alpha_1,\alpha_2\ge 0$,  $A\in\cal D$ and $C_1,C_2\subset\SS^{n-1}$ are Borel sets, then  
$$
\lim_{\r\to\infty}\frac{\cov\big(F_{\alpha_1}(A_\r,C_1),F_{\alpha_2}(A_\r,C_2)\big)}{\r^{n+k}}=\g^3
\frac{\d^{2(n-2k)+\alpha_1+\alpha_2}}{(n-2k+\alpha_1)(n-2k+\alpha_2)}\,{\cal I}(A;C_1,C_2)\,,
$$ 
where 
$$
{\cal I}(A;C_1,C_2):=\int\limits_{G(n,k)}\int\limits_{M^\perp}\ell_k(A\cap(M+y))^2\, \ell_{M^\perp}(\dint y)\,
b(M;C_1,C_2)\,\QQ(\dint M) 
$$
and $b(M;C_1,C_2):=b(M;C_1)b(M;C_2)$ with
$$
b(M;C_i):=
\int\limits_{G(n,k)}[L,M]\,\s_{(L+M)^\perp}(C_i\cap (L+M)^\perp)\,\QQ(\dint L)\, ,\quad i\in\{1,2\}\,.
$$
In the isotropic case, ${\cal I}(A;\SS^{n-1},\SS^{n-1})$ satisfies
\begin{equation}\label{eq:CPI}
{\cal I}(A;\SS^{n-1},\SS^{n-1})=\frac{\k_k}{k+1}\left((n-2k)\frac{\k_{n-k}^2}{\k_n}\frac{\binom{n-k}{k}}{\binom{n}{k}}\right)^2\int\limits_{A(n,1)}\ell_1(A\cap g)^{k+1}\,\mu_1(\dint g)\,.
\end{equation}
\end{enumerate}
\end{theorem}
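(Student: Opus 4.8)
This is immediate. Since $X$ is Poisson it has property $(S_2)$, so by the multivariate Mecke/Campbell formula \cite[Theorem 3.1.3, Corollary 3.2.3]{SW} the expectation $\EE F_\alpha(A_\r,C)$ equals $\tfrac12$ times the double $\Theta$-integral evaluated in Proposition \ref{prop:GeneralBody}, but with $A$ replaced by $A_\r$. Using $\ell(A_\r)=\r^n\ell(A)$ then yields the stated formula for every $\r>0$.

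\textbf{Part (b), reduction to the first-order term.} Write $f_i(E,F):=d(E,F)^{\alpha_i}\,\mathbf 1\{m(E,F)\in A_\r,\,0<d(E,F)\le\d,\,[E,F]>0\}\,\tfrac12\cH^0(\phi(E,F)\cap C_i)$, so that $F_{\alpha_i}(A_\r,C_i)=\tfrac12\sum_{(E,F)\in X_{\neq}^2}f_i(E,F)$ is a Poisson U-statistic of order two with symmetric kernel $f_i$. The plan is to invoke the covariance formula for such U-statistics (the Fock-space/chaos representation of \cite{LastPenrose}, in the form used in \cite{RS,LPST}), which gives
\[
\cov\bigl(F_{\alpha_1}(A_\r,C_1),F_{\alpha_2}(A_\r,C_2)\bigr)=\int_{A(n,k)}g_1(E)g_2(E)\,\Theta(\dint E)+\frac12\int_{A(n,k)}\int_{A(n,k)}f_1(E,F)f_2(E,F)\,\Theta(\dint E)\,\Theta(\dint F),
\]
where $g_i(E):=\int_{A(n,k)}f_i(E,F)\,\Theta(\dint F)$. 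The second (diagonal) term is, by the computation of Proposition \ref{prop:GeneralBody} with $\alpha=\alpha_1+\alpha_2$ and an extra $\cH^0$-factor, proportional to $\ell(A_\r)=\r^n\ell(A)$, hence $O(\r^n)=o(\r^{n+k})$ and negligible. Everything therefore reduces to the asymptotics of the first-order term $\int g_1g_2\,\dint\Theta$.

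\textbf{Evaluating the first-order term.} To compute $g_i(E)$ I would repeat the change of variables in the proof of Theorem \ref{thm:IntensityMeasureSegmentProcess}, but now with $E=L(E)+x$ held fixed and only $F=M+y$ integrated. Writing $V=L(E)+M$, $U=V^\perp$ and splitting $y$ accordingly, the intersection point parametrizes a point $p$ running over $E$ with $\ell_{M^\perp\cap V}(\dint y_1)=[L(E),M]\,\ell^E_k(\dint p)$, the distance becomes $\|u\|$ for $u\in U$, and the midpoint equals $p-u/2$. Dropping the offset $u/2$ (which matters only in a $\d/2$-boundary layer) and passing to spherical coordinates in $U$ as in Theorem \ref{cor:Proximityk_1=k_2} gives the leading term
\[
g_i(E)=\g\,\frac{\d^{\,n-2k+\alpha_i}}{n-2k+\alpha_i}\,b(L(E);C_i)\,\ell_k(E\cap A_\r)+(\text{boundary error}),
\]
with $b(\cdot;C_i)$ as in the statement. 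Substituting into $\int g_1g_2\,\dint\Theta$, using the factorization \eqref{eq:Intensity} of $\Theta$ and the \emph{exact} scaling $\int_{L^\perp}\ell_k((L+x)\cap A_\r)^2\,\ell_{L^\perp}(\dint x)=\r^{n+k}\int_{L^\perp}\ell_k((L+x)\cap A)^2\,\ell_{L^\perp}(\dint x)$ (obtained from $(L+\r w)\cap\r A=\r((L+w)\cap A)$ and $\ell_{L^\perp}(\dint x)=\r^{n-k}\ell_{L^\perp}(\dint w)$) reproduces precisely the factor $\r^{n+k}\g^3\tfrac{\d^{2(n-2k)+\alpha_1+\alpha_2}}{(n-2k+\alpha_1)(n-2k+\alpha_2)}\,\mathcal I(A;C_1,C_2)$. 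The main obstacle is to show that all boundary contributions — the error above together with the cross terms it generates in $\int g_1g_2\,\dint\Theta$ — are $o(\r^{n+k})$, and that $\mathcal I(A;C_1,C_2)<\infty$. This is exactly where $A\in\mathcal D$ enters: $\ell(\bd A)=0$ renders the $\d/2$-layer around $\bd A_\r$ asymptotically negligible after squaring and integrating, while the covering $\inter(A)=\bigcup_i O_i$ with $\sum_i D(O_i)<\infty$ bounds $\int_{L^\perp}\ell_k((L+x)\cap A)^2\,\ell_{L^\perp}(\dint x)$, each convex piece $O_i$ contributing at most a multiple of $D(O_i)^{\,n+k}$.

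\textbf{The isotropic case.} For $\QQ=\nu_k$ the factor $b(M;\SS^{n-1})$ no longer depends on $M$: since $\sigma_{(L+M)^\perp}(\SS^{n-1}\cap(L+M)^\perp)=\omega_{n-2k}$ and $\int_{G(n,k)}[L,M]\,\nu_k(\dint L)=c(n,k,k)$ by \eqref{eq:Defcnk1k2}, it equals $(n-2k)\tfrac{\k_{n-k}^2}{\k_n}\tfrac{\binom{n-k}{k}}{\binom{n}{k}}$, the bracket in \eqref{eq:CPI}. Pulling this constant out, $\mathcal I(A;\SS^{n-1},\SS^{n-1})$ reduces to this bracket squared times $\int_{A(n,k)}\ell_k(A\cap E)^2\,\mu_k(\dint E)$. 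I would then write $\ell_k(A\cap E)^2$ as a double integral over $E\times E$ and apply the two-point affine Blaschke–Petkantschin formula, which replaces the $k$-flat $E$ by the line $g$ through the pair of points and produces the weight $|s-t|^{k-1}$ along $g$; the power of the subspace determinant and the global constant are fixed by \eqref{eq:Defcnk1k2}, and the one-dimensional evaluation $\int_0^c\!\int_0^c|s-t|^{k-1}\,\dint s\,\dint t=\tfrac{2}{k(k+1)}c^{k+1}$ converts this into the chord-power integral $\tfrac{\k_k}{k+1}\int_{A(n,1)}\ell_1(A\cap g)^{k+1}\,\mu_1(\dint g)$, yielding \eqref{eq:CPI}.
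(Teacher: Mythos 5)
Your proposal is correct and follows essentially the same route as the paper: part (a) via the $(S_2)$-property and Proposition \ref{prop:GeneralBody}; part (b) via the Fock-space covariance identity of \cite{LastPenrose}, discarding the second-order chaos term as $O(\r^n)$, and extracting the $\r^{n+k}$ scaling from the first-order term, with $A\in\mathcal D$ entering exactly where you say it does (the paper makes your ``boundary-layer'' step rigorous by rescaling the distance threshold to $\d/\r$ and proving continuity of $v\mapsto\ell_k(A\cap(E+v))$ from the convex decomposition of $\inter(A)$, plus dominated convergence). The only other cosmetic difference is in the isotropic case, where you rederive the chord-power identity from the Blaschke--Petkantschin formula while the paper simply cites Equation (8.57) of \cite{SW}, noting that it extends from convex bodies to Borel sets.
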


\begin{remark}\rm
The parameters $\alpha_1,\alpha_2$ do not interfer with the geometry of the set $A$ in the {\em asymptotic} covariance formula in Theorem \ref{thm:AsymptoticEandV} (b). Here, $\alpha_1,\alpha_2$ only appear in the pre-factor, while the contribution of $A$ is restricted to ${\cal I}(A;C_1,C_2)$, which is independent of the parameters $\alpha_1,\alpha_2$.
\end{remark}

\begin{remark}\rm 
The integral appearing in \eqref{eq:CPI} is, up to normalization, the $(k+1)$st chord-power integral of $A$, a quantity which is well known in convex geometry, for convex bodies $A$ (see \cite{Schneider93,SW}). Relation \eqref{eq:CPI} corrects in this context the constant appearing in the asymptotic variance formula of \cite[Remark 2]{ST}. 
\end{remark}

\begin{proof}[Proof of Theorem \ref{thm:AsymptoticEandV}.]
For the proof of assertion (a), we use the fact that the Poisson process $X$ is of type $(S_2)$. This shows that  
\begin{align*}
\EE F_\alpha(A_\r,C) =\frac{1}{ 2}\int\limits_{A(n,k)}\int\limits_{A(n,k)}d(E,F)^\a\,&{\bf 1}\{m(E,F)\in A,\,0<d(E,F)\leq\d,\,[E,F]>0\}\\
& \qquad\times{1\over 2}\cH^0(\phi(E,F)\cap C)\,\Theta(\dint E)\,\Theta(\dint F)\,.
\end{align*}
Then Proposition \ref{prop:GeneralBody} yields the desired formula. 

For (b), let us introduce the functions $f_1^{(A,C_j,\d,\alpha_j)}$ on $A(n,k)$ and $f_2^{(A,C_j,\d,\alpha_j)}$ on $A(n,k)^2$, for $j=1,2$, by 
\begin{align*}
f_1^{(A,C_j,\d,\alpha_j)}(E):=\int\limits_{A(n,k)}d(E,F)^{\alpha_j}\,&{\bf 1}\{m(E,F)\in A,\,0<d(E,F)\leq\d,\,[E,F]>0\}\\
&\qquad\times{1\over 2}\cH^0(\phi(E,F)\cap C_j)\,\Theta(\dint F)\,,
\end{align*}
and 
\begin{align*}
f_2^{(A,C_j,\d,\alpha_j)}(E,F):=\frac{1}{ 2}d(E,F)^{\alpha_j}\,&{\bf 1}\{m(E,F)\in A,\,0<d(E,F)\leq\d,\,[E,F]>0\}\\
&\qquad\times{1\over 2}\cH^0(\phi(E,F)\cap C_j)\,.
\end{align*}
Then, by Theorem 1.1 in \cite{LastPenrose}, we have
\begin{equation}\label{eq:VarianceFormulaGeneral}
\cov\big(F_{\a_1}(A,C_1),F_{\a_2}(A,C_2)\big)=\langle f_1^{(A,C_1,\d,\a_1)},f_1^{(A,C_2,\d,\a_2)}\rangle_1+2\langle f_2^{(A,C_1,\d,\a_1)},f_2^{(A,C_2,\d,\a_2)}\rangle_2,
\end{equation}
where $\langle\,\cdot\,,\,\cdot\,\rangle_j$ stands for the standard scalar product in the Hilbert space $L^2(\Theta^j)$. In fact, the infinite sum in \cite{LastPenrose} only has two terms in our case. We start with the second term. The Cauchy-Schwarz inequality yields 
$$
0\leq\langle f_2^{(A,C_1,\d,\a_1)},f_2^{(A,C_2,\d,\a_2)}\rangle_2\leq\|f_2^{(A,C_1,\d,\a_1)}\|_2\,\|f_2^{(A,C_2,\d,\a_2)}\|_2\,,
$$ 
where $\|\,\cdot\,\|_2=\big(\langle\,\cdot\,,\,\cdot\,\rangle_2\big)^{1/2}$ is the norm in $L^2(\Theta^2)$. Further, using Proposition \ref{prop:GeneralBody} we find that
\begin{equation*}
\begin{split}
&\|f_2^{(A_\r,C_j,\d,\alpha_j)}\|_2^2\\
 &\le \frac{1}{ 4}\int\limits_{A(n,k)}\int\limits_{A(n,k)}d(E,F)^{2\a_j}\,{\bf 1}\{m(E,F)\in A_\r,\,0<d(E,F)\leq\d,\,[E,F]>0\}\\
 &\hspace{5cm}\times{1\over 2}\cH^0(\phi(E,F)\cap C_j)\,\Theta(\dint F)\,\Theta(\dint E)\\
&=\frac{\g^2}{ 4}\frac{\d^{n-2k+2\alpha_j}}{ n-2k+2\alpha_j}\r^n\ell(A)\int\limits_{G(n,k)}\int\limits_{G(n,k)}[L,M]\,\s_{(L+M)^\perp}(C_j\cap (L+M)^\perp)\,\QQ(\dint M)\,\QQ(\dint L)
\end{split}
\end{equation*}
for $j\in\{1,2\}$, since $( {1\over 2}\cH^0(\phi(E,F)\cap C_j))^2\le  {1\over 2}\cH^0(\phi(E,F)\cap C_j)$. Consequently, there is a constant $c>0$, depending only on $A$, $C_j$, $\alpha_j$, $\d$ and $\g$ such that 
$$
0\leq\lim_{\r\to\infty}\frac{2\langle f_2^{(A_\varrho,C_1,\d,\a_1)},f_2^{(A_\varrho,C_2,\d,\a_2)}\rangle_2}{\r^{n+k}}\leq\lim_{\r\to\infty}\frac{c\,\r^n}{\r^{n+k}}=0\,,
$$ 
since $k\geq 1$. We now consider the first term in \eqref{eq:VarianceFormulaGeneral}. Similarly as in the proof of Lemma 4.1 in \cite{ST}, 
it follows that
\begin{equation*}
\begin{split}
f_1^{(A,C_j,\d,\a_j)}(M+y)=\g\int\limits_{G(n,k)}\int\limits_{(L+M)^\perp}[L,M]\,&\|x\|^{\alpha_j}\,\ell_k\left(A\cap\left(M+\frac{x}{ 2}+y\right)\right)\\
&\times{\bf 1}\{0<\|x\|\leq\d,\,x/\|x\|\in C_j\}\,\ell_{(L+M)^\perp}(\dint x)\,\QQ(\dint L)
\end{split}
\end{equation*}
for $j\in\{1,2\}$ and $M\in G(n,k)$, $y\in M^\perp$. 
From this we conclude that 
$$
f_1^{(A_\r,C_j,\d,\a_j)}(M+y)=\r^{n-k+\alpha_j}f_1^{(A,C_j,\d/\r,\a_j)}(M+y/\r)\,,\qquad j\in\{1,2\}\,.
$$ 
This yields
\begin{equation*}
\begin{split}
\langle f_1^{(A_\varrho,C_1,\d,\a_1)},f_1^{(A_\varrho,C_2,\d,\a_2)}\rangle_1 &= \r^{2(n-k)+\a_1+\a_2}\,\g\int\limits_{G(n,k)}\int\limits_{M^\perp}f_1^{(A,C_1,\d/\r,\a_1)}(M+y/\r)\\ &\hspace{5cm}\times\,f_1^{(A,C_2,\d/\r,\a_2)}(M+y/\r)\,\ell_{M^\perp}(\dint y)\, \QQ(\dint M)\\
&=\r^{3(n-k)+\alpha_1+\a_2}\,\g\int\limits_{G(n,k)}\int\limits_{M^\perp}f_1^{(A,C_1,\d/\r,\a_1)}(M+y)\\ &\hspace{5cm}\times\,f_1^{(A,C_2,\d/\r,\a_2)}(M+y)\,\ell_{M^\perp}(\dint y)\,\QQ(\dint M)\\
&=\r^{3(n-k)+\alpha_1+\a_2}\langle f_1^{(A,C_1,\d/\r,\a_1)},f_1^{(A,C_2,\d/\r,\a_2)}\rangle_1\,.
\end{split}
\end{equation*}
For $j\in\{1,2\}$, we now consider
\begin{equation*}
\begin{split}
&\r^{n-2k+\alpha_j}f_1^{(A,C_j,\d/\r,\a_j)}(M+y)\\
&= \r^{n-2k+\alpha_j}\,\g\int\limits_{G(n,k)}\int\limits_{(L+M)^\perp}[L,M]\,\|x\|^{\alpha_j}\,\ell_k\left(A\cap\left(M+\frac{x}{ 2}+y\right)\right)\\
&\qquad\qquad\qquad\qquad \times {\bf 1}\{0<\|x\|\leq\d/\r,\,x/\|x\|\in C_j\}\,\ell_{(L+M)^\perp}(\dint x)\,\QQ(\dint L)\\
&=\g\int\limits_{G(n,k)}\int\limits_{(L+M)^\perp}[L,M]\,\|x\|^{\alpha_j}\,\ell_k\left(A\cap\left(M+\frac{x}{2\r}+y\right)\right)\\
&\qquad\qquad\qquad\qquad\times  {\bf 1}\{0<\|x\|\leq\d,\,x/\|x\|\in C_j\}\,\ell_{(L+M)^\perp}(\dint x)\,\QQ(\dint L)\, .
\end{split}
\end{equation*}
Here, for given $M$ and $\ell_{M^\bot}$-almost all $y$, $A$ can be replaced by $\inter (A)$. In fact, by   Fubini's Theorem 
\begin{equation*}
\begin{split}
&\r^{n-2k+\alpha_j}\int_{M^\perp}f_1^{(A,C_j,\d/\r,\a_j)}(M+y)\ell_{M^\bot}(\dint y)\\
&=\g\int\limits_{G(n,k)}\int\limits_{(L+M)^\perp}[L,M]\,\|x\|^{\alpha_j}\,\int_{M^\perp}\ell_k\left(A\cap\left(M+\frac{x}{2\r}+y\right)\right)\ell_{M^\bot}(\dint y)\\
&\qquad\qquad\qquad\qquad\times  {\bf 1}\{0<\|x\|\leq\d,\,x/\|x\|\in C_j\}\,\ell_{(L+M)^\perp}(\dint x)\,\QQ(\dint L)\\
&=\ell (A) \, \g\int\limits_{G(n,k)}\int\limits_{(L+M)^\perp}[L,M]\,\|x\|^{\alpha_j}\,{\bf 1}\{0<\|x\|\leq\d,\,x/\|x\|\in C_j\}\,\ell_{(L+M)^\perp}(\dint x)\,\QQ(\dint L)
\end{split}
\end{equation*}
and, similarly,
\begin{equation*}
\begin{split}
&\r^{n-2k+\alpha_j}\int_{M^\perp}f_1^{(\inter (A),C_j,\d/\r,\a_j)}(M+y)\ell_{M^\bot}(\dint y)\\
&=\ell (\inter(A)) \, \g\int\limits_{G(n,k)}\int\limits_{(L+M)^\perp}[L,M]\,\|x\|^{\alpha_j}\,{\bf 1}\{0<\|x\|\leq\d,\,x/\|x\|\in C_j\}\,\ell_{(L+M)^\perp}(\dint x)\,\QQ(\dint L)\, .
\end{split}
\end{equation*} 
Now the assertion follows from  $\ell(\bd (A))=0$, which holds for $A\in\mathcal{D}$. We therefore assume now that $A$ is open. By neglecting a set of translation vectors $y\in M^\perp$ of  measure 0 (we have to exclude vectors $y$ in the boundary  of the orthogonal projection of one of the sets $O_i$ to $M^\perp$), we can further assume that $M+y$ does not touch any of the sets $\text{cl}(O_i)$ in the representation $A=\bigcup_{i\ge 1} O_i$. Set $E=M+y$, and let $v\in L(E)^\perp$. Then
\begin{align*}
\big|\ell_k(A\cap(E+v))-\ell_k(A\cap E)\big|&=\big|\ell_k((A-v)\cap E)-\ell_k(A\cap E)\big|\\
&=\ell_k\big(((A-v)\setminus A) \cap E\big)+\ell_k\big((A\setminus(A-v))\cap E\big)\\
&\le\sum_{i\ge 1}\big(\ell_k(((O_i-v)\setminus O_i) \cap E)+\ell_k((O_i\setminus(O_i-v))\cap E)\big)\\
&=\sum_{i\ge 1}\big|\ell_k((O_i-v)\cap E)-\ell_k( O_i\cap E)\big|\,.
\end{align*}
In the last sum, $O_i$ can be replaced by $\text{cl}(O_i)$, $i\ge 1$. But then Theorem 1.8.10 and Theorem 1.8.20 in \cite{Schneider93} can be applied (with $E$ replaced by the intersection of $E$ with a sufficiently large convex body) to see that $\ell_k((\text{cl}(O_i)-v)\cap E)\to \ell_k(\text{cl}(O_i)\cap E)$ as $\|v\|\to 0$, for each $i\ge 1$. Since 
$$
\sum_{i\ge 1}\ell_k((\text{cl}(O_i)-v)\cap E) \le \kappa_k\sum_{i\ge 1}D(O_i)^k<\infty\,,
$$ 
we conclude that $\ell_k(A\cap(E+v))\to \ell_k(A\cap E)$, as $\|v\|\to 0$. 

By assumption, $A$ has finite diameter $D(A)$. Hence, since 
\begin{equation}\label{domconv}
\ell_k\left(A\cap\left(M+\frac{x}{2\varrho}+y\right)\right)\le \kappa_k\,D(A)^k\,{\bf 1}\left\{y\in (A+\delta/(2\varrho)B^n)|M^\perp\right\}\,,
\end{equation}
the dominated convergence theorem can be applied and we obtain
\begin{equation}
\begin{split}
&\r^{n-2k+\alpha_j}f_1^{(\inter (A),C_j,\d/\r,\a_j)}(M+y)\label{pointconv}\\
&\longrightarrow \g\int\limits_{G(n,k)}\int\limits_{(L+M)^\perp}[L,M]\,\|x\|^{\alpha_j}\ell_k(A\cap (M+y))\\
&\qquad\qquad\qquad\qquad\times {\bf 1}\{0<\|x\|\leq\d,\,x/\|x\|\in  C_j\}\,\ell_{(L+M)^\perp}(\dint x)\,\QQ(\dint L)\\
&= \g\frac{\d^{n-2k+\alpha_j}}{ n-2k+\alpha_j}\,\ell_k(A\cap(M+y))\int\limits_{G(n,k)}[L,M]\,\s_{(L+M)^\perp}(C_j\cap (L+M)^\perp)\,\QQ(\dint L)
\end{split}
\end{equation}
as $\r\to\infty$. Using the pointwise convergence \eqref{pointconv} and again \eqref{domconv}, we further deduce from the dominated convergence theorem that
\begin{equation*}
\begin{split}
\frac{\langle f_1^{(A_\varrho,C_1,\d,\a_1)},f_1^{(A_\varrho,C_2,\d,\a_2)}\rangle_1}{\r^{n+k}} &= \frac{\r^{3(n-k)+\alpha_1+\a_2}\langle f_1^{(A,C_1,\d/\r,\a_1)},f_1^{(A,C_2,\d/\r,\a_2)}\rangle_1}{\r^{n+k}}\\ &= \r^{2(n-2k)+\alpha_1+\a_2}\langle f_1^{(A,C_1,\d/\r,\a_1)},f_1^{(A,C_2,\d/\r,\a_2)}\rangle_1\\ &\longrightarrow \g^3\frac{\d^{2(n-2k)+\alpha_1+\a_2}}{ (n-2k+\alpha_1)(n-2k+\a_2)}
\,{\cal I}(A;C_1,C_2)\,,
\end{split}
\end{equation*}
as $\r\to\infty$. 

It remains to consider the isotropic case. Here, we use that
$$
\int\limits_{G(n,k)}[L,M]\, \nu_k(dL)=c(n,k,k)\,,
$$
with $c(n,k,k)$ as in \eqref{eq:Defcnk1k2}. Thus, $b(M;\SS^{n-1},\SS^{n-1})=b(M;\SS^{n-1})^2$ with
$$b(M;\SS^{n-1})=\o_{n-2k}\int\limits_{G(n,k)}[L,M]\, \nu_k(dL)=(n-2k)\frac{\kappa_{n-k}^2}{\kappa_n}\frac{\binom{n-k}{k}}{\binom{n}{k}}\,.$$
For the final reformulation, Equation (8.57) in \cite{SW} can be applied (although the result in \cite{SW} is formulated for convex bodies, an inspection of the proof shows that it remains valid for arbitrary Borel sets).
\end{proof}

\subsection{Central limit theorems}

After having investigated first- and second-order properties of length-power direction functionals $F_\a(A,C)$ of proximity processes, we now turn to the central limit problem. For this, recall the definition of the set class $\mathcal{D}$ and fix $A\in{\cal D}$ and a Borel set $C\subset\SS^{n-1}$. For $\alpha\geq 0$, we define the normalized random variable 
$$
\bar F_\alpha(A_\r,C):=\varrho^{-\frac{(n+k)}{ 2}}\big(F_\alpha(A_\r,C)-\EE F_\alpha(A_\r,C)\big)\,.
$$ 
In part (a) of the next theorem we derive a univariate central limit theorem for $\bar F(A_\r,C)$, while a multivariate central limit theorem for the normalized random vector 
$$
\bar{\bf F}_{\mathbf \alpha}(A_\r,{\bf C)}:=\r^{-\frac{n+k}{ 2}}\big(F_{\alpha_1}(A_\r,C_1)-\EE F_{\alpha_1}(A_\r,C_1),\ldots,F_{\alpha_m}(A_\r,C_m)-\EE F_{\alpha_m}(A_\r,C_m)\big)\,,
$$ where $m\in\NN$ is a fixed integer, $\alpha_1,\ldots,\alpha_m\geq 0$, and $C_1,\ldots,C_m\subset\SS^{n-1}$ are Borel sets, is the content of part (b). The bold face letter ${\bf C}$ represents tuples of these data, i.e. ${\bf C}=(C_1,\ldots,C_m)$. 

\begin{theorem}\label{thm:CLT}
Consider a stationary Poisson $k$-flat process of intensity $\g>0$ with $1\le k<n/2$.
\begin{enumerate}
 \item[\rm (a)] The random variable $\bar F_\alpha(A_\r,C)$ converges in distribution, as $\r\to\infty$, to a centred Gaussian random variable $N(\sigma^2(A,C))$ with variance $\s^2(A,C)$ given by 
\begin{equation}\label{eq:CLTAsyVar}
\s^2(A,C)=\g^3\frac{\d^{2(n-2k+\alpha)}}{ (n-2k+\alpha)^2}  \,{\cal I}(A;C,C)\,.
\end{equation}
 \item[\rm (b)] The random vector $\bar{\bf F}_{\mathbf{\alpha}}(A_\r,\mathbf{C})$ converges in distribution, as $\r\to\infty$, to a centred Gaussian random vector ${\bf N}(\Sigma)$ with covariance matrix $\Sigma=(\sigma_{ij})_{i,j=1}^m$ given by 
\begin{equation}\label{eq:CLTAsyCov}
\sigma_{ij}=\g^3 \frac{ \d^{2(n-2k)+\alpha_i+\alpha_j}   }{(n-2k+\alpha_i)(n-2k+\alpha_j)}\,{\cal I}(A;C_i,C_j)\,.
\end{equation}
\end{enumerate}
\end{theorem}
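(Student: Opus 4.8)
The plan is to exploit that, for fixed $\a\ge0$, $A\in{\cal D}$ and $C\subset\SS^{n-1}$, the functional $F_\alpha(A_\r,C)$ is a U-statistic of order two of the Poisson process $X$, whose symmetric kernel is precisely the one underlying the proof of Theorem \ref{thm:AsymptoticEandV}. By the chaos expansion of Poisson U-statistics from \cite{RS}, the centred functional has a finite Wiener--It\^o expansion with only two non-trivial terms,
$$
F_\alpha(A_\r,C)-\EE F_\alpha(A_\r,C)=I_1\big(f_1^{(A_\r,C,\d,\a)}\big)+I_2\big(f_2^{(A_\r,C,\d,\a)}\big),
$$
where $I_1,I_2$ are the first- and second-order Wiener--It\^o integrals with respect to $X-\Theta$ and $f_1^{(A_\r,C,\d,\a)}$, $f_2^{(A_\r,C,\d,\a)}$ are the kernels from that proof. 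My first step is to discard the second chaos. The bound $\|f_2^{(A_\r,C,\d,\a)}\|_2^2=O(\r^{n})$ obtained in the proof of Theorem \ref{thm:AsymptoticEandV}(b) gives $\EE\big[\r^{-(n+k)/2}I_2(f_2^{(A_\r,C,\d,\a)})\big]^2=O(\r^{-k})\to0$, since $k\ge1$, so this term vanishes in $L^2$ and, by Slutsky's theorem, it suffices to prove the limit theorem for the first-chaos part $\r^{-(n+k)/2}I_1(f_1^{(A_\r,C,\d,\a)})$.

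For the first chaos I would invoke the method of cumulants. As $I_1(g)=\int g\,\dint(X-\Theta)$ is a single compensated Poisson integral whose kernel $g=f_1^{(A_\r,C,\d,\a)}\ge0$ is bounded and $\Theta$-integrable, its cumulants are elementary: $\mathrm{cum}_m(I_1(g))=\int g^m\,\dint\Theta$ for every $m\ge2$. Feeding in the scaling identity $f_1^{(A_\r,C,\d,\a)}(M+y)=\r^{n-k+\a}f_1^{(A,C,\d/\r,\a)}(M+y/\r)$ and the substitution $y=\r z$, together with the uniform domination \eqref{domconv} and the pointwise limit \eqref{pointconv} from the proof of Theorem \ref{thm:AsymptoticEandV}(b), I expect to obtain
$$
\int\big(f_1^{(A_\r,C,\d,\a)}\big)^m\,\dint\Theta=O\big(\r^{mk+n-k}\big),\qquad m\ge2,
$$
and hence $\mathrm{cum}_m\big(\r^{-(n+k)/2}I_1(f_1^{(A_\r,C,\d,\a)})\big)=O\big(\r^{(n-k)(1-m/2)}\big)$. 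Since $k<n$, the exponent is negative for every $m\ge3$, so all cumulants of order at least three vanish in the limit, while for $m=2$ the same computation, evaluated with the exact limit and compared with Theorem \ref{thm:AsymptoticEandV}(b), yields $\mathrm{cum}_2\to\sigma^2(A,C)$ as in \eqref{eq:CLTAsyVar}. As the centred Gaussian law is determined by its moments, convergence of all cumulants to those of $N(\sigma^2(A,C))$ gives convergence in distribution by the Fr\'echet--Shohat theorem, which establishes part (a).

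For the multivariate statement (b) the plan is to combine the Cram\'er--Wold device with the identical argument. A fixed linear combination $\sum_{i=1}^m\lambda_i\bar F_{\alpha_i}(A_\r,C_i)$ equals, after discarding its (again negligible) second chaos as above, the single Poisson integral $\r^{-(n+k)/2}I_1\big(\sum_{i=1}^m\lambda_i f_1^{(A_\r,C_i,\d,\a_i)}\big)$. The cumulant computation of the previous paragraph applies verbatim to this kernel: cumulants of order at least three are of order $O(\r^{(n-k)(1-m/2)})$ and vanish, while the variance converges, by the covariance asymptotics of Theorem \ref{thm:AsymptoticEandV}(b), to $\sum_{i,j=1}^m\lambda_i\lambda_j\sigma_{ij}$ with $\sigma_{ij}$ as in \eqref{eq:CLTAsyCov}. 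Thus every linear combination is asymptotically centred Gaussian with the correct variance, and Cram\'er--Wold delivers the joint convergence to $\mathbf N(\Sigma)$.

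The hard part will be the moment/scaling estimate $\int(f_1^{(A_\r,C,\d,\a)})^m\,\dint\Theta=O(\r^{mk+n-k})$ for all $m$: this requires promoting the dominated-convergence setup of Theorem \ref{thm:AsymptoticEandV}(b), which was tailored to the second moment only, to arbitrary powers. The key point to check is that after the substitution $y=\r z$ the rescaled integrand $\r^{n-2k+\a}f_1^{(A,C,\d/\r,\a)}(M+z)$ is bounded and supported, uniformly in $\r\ge1$, on an $\r$-independent bounded set in $z$, so that dominated convergence controls its $m$-th power and pins down the exact leading constant when $m=2$; the remaining manipulations are routine bookkeeping. I note that this entire cumulant analysis can alternatively be run directly on the U-statistic through the general cumulant formulae of \cite{LPST}: there the dominant connected diagrams reproduce exactly the first-chaos scaling above, whereas every diagram involving the kernel $f_2$ is of lower order and only contributes the same negligible $O(\r^{n})$ corrections.
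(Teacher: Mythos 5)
Your argument is correct and reaches the same conclusions, but it takes a more self-contained route than the paper. The paper also sets up the method of cumulants (via the Saulis--Statulevi\v{c}ius criterion), but then verifies the required cumulant conditions in one stroke by observing that $F_\alpha(A_\varrho,C)$ is a Poisson U-statistic of order two and invoking the general cumulant formulae of \cite{LPST} (Corollary 3 for part (a), Theorem 2 for part (b)), after translating the growing window into a growing intensity parameter $t=\varrho^{1/(n-k)}$. You instead re-derive the relevant special case by hand: you split off the second Wiener--It\^o chaos, kill it in $L^2$ using the bound $\|f_2^{(A_\varrho,\cdot)}\|_2^2=O(\varrho^n)$ already established in the proof of Theorem \ref{thm:AsymptoticEandV}(b), and then compute the cumulants of the remaining compensated Poisson integral explicitly as $\int g^m\,\dint\Theta$. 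Your scaling estimate is right: the substitution $y=\varrho z$ together with the uniform bound and the $\varrho$-independent bounded support of $\varrho^{n-2k+\alpha}f_1^{(A,C,\d/\varrho,\alpha)}(M+z)$ (both contained in \eqref{domconv} and the surrounding discussion) gives $\int (f_1^{(A_\varrho,C,\d,\alpha)})^m\,\dint\Theta=O(\varrho^{mk+n-k})$, hence normalized cumulants of order $(n-k)(1-m/2)$ in $\varrho$, which vanish for $m\ge3$; the $m=2$ case is pinned down by Theorem \ref{thm:AsymptoticEandV}(b) since the second-chaos contribution to the covariance is negligible there as well. The Cram\'er--Wold reduction for part (b) is fine (note only that the kernel $\sum_i\lambda_i f_1^{(A_\varrho,C_i,\d,\alpha_i)}$ need not be non-negative, but the cumulant formula and the bound via $\sum_i|\lambda_i|f_1^{(A_\varrho,C_i,\d,\alpha_i)}$ still apply). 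What your approach buys is independence from the diagram formulae of \cite{LPST}; what it costs is exactly the extra work you flag, namely promoting the dominated-convergence setup from second moments to arbitrary powers, which your sketch handles adequately. You correctly note at the end that the \cite{LPST} route is available; that is precisely the paper's proof.
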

\begin{proof}
To prove the theorem, we use the method of cumulants, an equivalent of the classical method of moments for which we refer to \cite[Section 30]{Billingsley}. To apply it, recall that the joint cumulant of real-valued random variables $Y_1,\ldots,Y_m$, $m\geq 1$, possessing moments of all orders, is given by
$$
\Gamma(Y_1,\ldots,Y_m) := (-\mathfrak{i})^m\frac{\partial^m}{\partial z_1\ldots\partial z_m}\log\EE \exp(\mathfrak{i}\,\lan { z},{\bf Y}\ran)\,\Big|_{{ z}={ 0}}\,,
$$
where $\mathfrak{i}$ is the imaginary unit, ${\bf Y}=(Y_1,\ldots,Y_m)$, ${ z}=(z_1,\ldots,z_m)\in\RR^m$ and ${ 0}$ stands for the zero-vector in $\RR^m$. Moreover, for integers $k\geq 1$, the $k$th cumulant of a single random variable $Y$ having finite moments of all orders is given by $\Gamma_k(Y):=\G(Y,\ldots,Y)$, where the entry $Y$ appears exactly $k$ times. According to \cite[page 191]{ProkhorovStatu} it is known that a family of centred random variables $(Y_\varrho)_{\varrho>0}$ converges in distribution to a centred Gaussian random variable with variance $\sigma^2>0$, as $\varrho\to\infty$, if
\begin{equation}\label{eq:CLTProof1}
\lim_{\varrho\to\infty}\var(Y_\varrho)=\sigma^2\qquad\text{and}\qquad\lim_{\varrho\to\infty}\G_k(Y_\varrho)=0\quad\text{for all }k\geq 3\,.
\end{equation}
Similarly, a family $({\bf Y}_\varrho)_{\varrho\geq 1}$ of centred random vectors in $\RR^m$ converges in distribution to a centred Gaussian random vector with covariance matrix $\Sigma=(\sigma_{ij})_{i,j=1}^m$, as $\varrho\to\infty$, provided that
\begin{equation}\label{eq:CLTProof2}
\begin{split}
&\lim_{\varrho\to\infty}\cov(Y_\varrho^{(i)},Y_\varrho^{(j)})=\sigma_{ij}^2\qquad\text{for all } 1\leq i,j\leq m\,,\\
&\lim_{\varrho\to\infty}\G(Y_\varrho^{(i_1)},\ldots,Y_\varrho^{(i_k)})=0\quad \text{for all }k\geq 3\text{ and }1\leq i_1\leq\ldots\leq i_k\leq m\,,
\end{split}
\end{equation}
where $Y_\varrho^{(i)}$ stands for the $i$th coordinate of ${\bf Y}_\varrho$, $1\leq i\leq m$.

A functional of the type $F_\a(A,C)$, with $\a\geq 0$, $A\in{\cal D}$ and a Borel set $C\subset\SS^{n-1}$, is a special case of a so-called Poisson U-statistic of order two as introduced in \cite{RS}. For such functionals, explicit formulae for cumulants and joint cumulants have been developed in \cite{LPST} (notice that due to our assumptions, the random variables we consider have finite moments of all orders). Although most of the theory in \cite{LPST} deals with an increasing intensity parameter $\gamma$ while (in our language) $\varrho$ is kept fixed, scaling properties of Poisson $k$-flat processes imply that the results are also applicable for fixed $\gamma$ and increasing window size $\varrho$. In fact, the intensity parameter -- called $t$ in \cite{LPST} -- has to be replaced by $\varrho^{1/(n-k)}$. In this set-up, Corollary 3 in \cite{LPST} 
 shows that \eqref{eq:CLTProof1} is satisfied for the family of random variables $(\bar F_\alpha(A_\varrho,C))_{\varrho>0}$ with asymptotic variance as in \eqref{eq:CLTAsyVar}. In the multivariate case, we can apply \cite[Theorem 2]{LPST}  to conclude that \eqref{eq:CLTProof2} holds for the family $(\bar{\bf F}_\alpha(A_\varrho,\bC))_{\varrho>0}$ of $m$-dimensional random vectors with asymptotic covariances $\sigma_{ij}^2$ given by \eqref{eq:CLTAsyCov}. This completes the proof.
\end{proof}

\begin{remark}\rm
If $\Sigma$ is the covariance matrix from Theorem \ref{thm:CLT} (b), then the associated quadratic form satisfies
$$
\langle z, \Sigma z\rangle = \g^3 \d^{2(n-2k)}\int\limits_{G(n,k)}\ell_k(A;M)\langle z,{a}{b}(M)\rangle^2\, \QQ(\dint M)\,,\qquad z\in\RR^m\,,
$$
with $a_i:=\d^\alpha_i/(n-2k+\alpha_i)$, and with ${a}:=(a_1,\ldots,a_m)$, $b(M):=(b(M;C_1),\ldots,b(M;C_m))$, 
${a}{b}(M):=(a_1b(M;C_1),\ldots,a_mb(M;C_m))$, and 
$$
\ell_k(A;M):=\int\limits_{M^\perp}\ell_k(A\cap (M+y))^2\,\ell_{M^\perp}(\dint y)>0\,,\qquad M\in G(n,k)\,,
$$
since $\ell(A)>0$ by our assumption that $A\in{\cal D}$. 
If $C_i=C$ for $i=1,\ldots,m$, then $\langle z,{a}{b}(M)\rangle=\langle z,{a}\rangle b(M;C)$, and therefore $\Sigma$ has rank $1$ 
if $b(\,\cdot\,;C)\neq 0$ on a set of positive $\QQ$-measure. On the other hand, if there exist $m$ subspaces $M_1,\ldots,M_m$ in the support of $\QQ$ 
such that the vectors ${a}{b}(M_1),\ldots,{a}{b}(M_m)$ are linearly independent, then $\Sigma$ is positive definite.
\end{remark}

\begin{remark}\rm
One can refine Theorem \ref{thm:CLT} by providing rates of convergence. In the univariate case, for example, the results in \cite{EichelsbacherTh} (see in particular Example 4.12 there) can be applied to deduce that 
$$\sup_{t\in\RR}\Big|\PP\Big(\frac{F_\a(A_\r,C)-\EE F_\a(A_\r,C)}{\sqrt{\var(F_\a(A_\r,C))}}\leq t\Big)-\PP(N\leq t)\Big|\leq c\,\r^{-(n-k)/2}$$ 
for a constant $c>0$ only depending on $A$, $C$, $\a$ and $n$ and where $N$ stands for a standard Gaussian random variable. In the multivariate case, the bounds obtained in \cite{PeZheng10} can be applied (see \cite{LPST} for related results).
\end{remark}

\subsection{Limit theorems for order statistics and related point processes}\label{subsec:PPPConvergence}

We assume the same set-up as in the previous subsections, that is, $X$ is a stationary Poisson process of $k$-dimensional flats in $\RR^n$ with intensity $0<\g<\infty$ and directional distribution $\QQ$ satisfying $1\le k<n/2$. In this section, we are not interested in limit theorems for cumulative functionals of the proximity process, but  in limit theorems related to order statistics of length-power direction functionals. More precisely, let us fix some $\alpha>0$, a Borel bounded set $A\subset\RR^n$ with $\ell(A)>0$ and a Borel set $C\subset\SS^{n-1}$. For $\varrho\geq 1$ and $\d>0$, we define the point process $\xi_\varrho$ on the positive real half-line $\RR_+$ by 
$$
\xi_\varrho:=\frac{1}{ 2}\sum_{(E,F)\in X_{\neq}^2}\delta_{d(E,F)^\alpha}\,{\bf 1}\{m(E,F)\in A_\varrho,\,d(E,F)\leq\d,\,[E,F]>0\}\,{1\over 2}\cH^0(\phi(E,F)\cap C)\,.
$$ 
Let $F_\varrho^{(m)}$, for $m\in\NN$, denote the distance from the origin to the $m$th largest point of $\xi_\varrho$. In particular, for $m=1$ and $\a=1$, this is the smallest distance between two different $k$-flats from $X$ in general position with midpoint in $A_\varrho$ and direction of the orthogonal line segment in $C$. Our aim is to describe the limit behavior of the suitably re-scaled point process $\xi_\varrho$ and of the order-statistics $F_\varrho^{(m)}$, as $\varrho$ tends to infinity. We will see that the limits are independent of the choice of the distance threshold $\d$, which is thus suppressed in our notation. The following result generalizes parts of the theory developed in \cite{ST} to arbitrary length-powers and also allows for directional constraints.

\begin{theorem}\label{thm:PPPConvergence}
Let $X$ be a stationary Poisson $k$-flat process in $\RR^n$ with $1\leq k\leq n/2$ with intensity $0<\g<\infty$ and directional distribution $\QQ$. For a bounded Borel set $A\subset\RR^n$ with $\ell(A)>0$ and a Borel set $C\subset\SS^{n-1}$ define 
$$
\beta:=\frac{\g^2}{ 2(n-2k)}\,\ell(A)\int\limits_{G(n,k)}\int\limits_{G(n,k)}[L,M]\,\s_{(L+M)^\perp}(C\cap (L+M)^\perp)\,\QQ(\dint M)\,\QQ(\dint L)\,.
$$ 
Then $\varrho^{n\alpha/(n-2k)}\xi_\varrho$ converges in distribution to a Poisson point process on $\RR_+$ with intensity measure given by
\begin{equation}\label{eq:IntensityMeasurePPP}
B\mapsto (n-2k)\,\frac{\beta}{ \alpha}\,\int\limits_B u^{(n-2k-\alpha)/\alpha}\,\dint u\,,
\end{equation}
where $B\subset\RR_+$ is a Borel set. In particular, $\varrho^{n\alpha/(n-2k)}F_\varrho^{(1)}$ converges in distribution to a Weibull random variable with distribution function $x\mapsto 1-\exp(-\beta x^{(n-2k)/\alpha})\,{\bf 1}\{x>0\}$. 
\end{theorem}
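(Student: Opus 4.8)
The plan is to view $\xi_\varrho$ as the point process of values of a symmetric order-two U-statistic of the Poisson process $X$ (the sum running over unordered pairs of distinct flats) and to prove both assertions by the method of moments for point processes, following the strategy of \cite{ST}. Write $\tilde\xi_\varrho:=\varrho^{n\alpha/(n-2k)}\xi_\varrho$ for the rescaled process and let $\Pi$ be the Poisson process on $\RR_+$ whose intensity measure $\lambda$ has the density in \eqref{eq:IntensityMeasurePPP}; a direct integration gives $\lambda((0,x])=\beta\,x^{(n-2k)/\alpha}$ for $x>0$. Throughout one uses $n-2k\ge 1$, so all exponents are well defined.

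First I would compute the intensity measure of $\tilde\xi_\varrho$ exactly. Since $X$ is Poisson it is of type $(S_2)$, so the multivariate Mecke--Campbell formula turns $\EE\xi_\varrho((0,x])$ into precisely the integral evaluated in Proposition~\ref{prop:GeneralBody} and Theorem~\ref{cor:Proximityk_1=k_2}(a), with $A$ replaced by $A_\varrho$ (hence $\ell(A_\varrho)=\varrho^n\ell(A)$) and the length integration restricted to $(0,\min\{\d,x^{1/\alpha}\}]$. With $J:=\int_{G(n,k)}\int_{G(n,k)}[L,M]\,\sigma_{(L+M)^\perp}(C\cap(L+M)^\perp)\,\QQ(\dint M)\,\QQ(\dint L)$ this gives
\[
\EE\xi_\varrho((0,x])=\frac{\g^2}{2}\,\varrho^n\ell(A)\,\frac{\min\{\d,x^{1/\alpha}\}^{\,n-2k}}{n-2k}\,J=\varrho^n\beta\,\min\{\d,x^{1/\alpha}\}^{\,n-2k}.
\]
A point of $\xi_\varrho$ at $p$ is a point of $\tilde\xi_\varrho$ at $\varrho^{n\alpha/(n-2k)}p$, so $\EE\tilde\xi_\varrho((0,x])=\EE\xi_\varrho\big((0,\varrho^{-n\alpha/(n-2k)}x]\big)$. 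The inner length threshold is $r_\varrho:=\varrho^{-n/(n-2k)}x^{1/\alpha}\to 0$, hence $\le\d$ for all large $\varrho$; the minimum then disappears and $\EE\tilde\xi_\varrho((0,x])=\beta\,x^{(n-2k)/\alpha}=\lambda((0,x])$. By additivity $\EE\tilde\xi_\varrho(U)\to\lambda(U)$ for every finite union $U$ of bounded intervals in $\RR_+$.

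Next I would prove $\tilde\xi_\varrho\xrightarrow{d}\Pi$. As $\Pi$ is simple and its distribution is determined by its factorial moments (which satisfy the Carleman condition), it suffices to show that for each such $U$ and each $j\ge 1$ one has $\EE\big[\tilde\xi_\varrho(U)(\tilde\xi_\varrho(U)-1)\cdots(\tilde\xi_\varrho(U)-j+1)\big]\to\lambda(U)^j$. The $j$th factorial moment is a sum over $j$-tuples of distinct pairs of flats; applying Mecke and sorting the resulting integral over at most $2j$ flats according to which flats coincide produces one leading configuration and several degenerate ones. In the leading configuration all $2j$ flats are distinct, so the $j$ pairs are mutually disjoint; since $\Theta^{(2j)}=\Theta^{2j}$ for a Poisson process this term factorises over the pairs and converges to $\big(\EE\tilde\xi_\varrho(U)\big)^j\to\lambda(U)^j$. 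The crux is the negligibility of every degenerate configuration, in which at least two pairs share a flat. The dominant such case is a single shared flat, a triple $(G,E,F)$ of distinct flats with $m(G,E),m(G,F)\in A_\varrho$ and $d(G,E),d(G,F)\le r_\varrho$; the change of variables used in Theorem~\ref{thm:IntensityMeasureSegmentProcess} (midpoint plus perpendicular, the sliding along $G$ contributing a factor $\ell_k(G\cap A_\varrho)=O(\varrho^k)$) shows that the expected number of such triples is of order $\varrho^{\,n+k}r_\varrho^{\,2(n-2k)}$, i.e.\ of order $\varrho^{\,k-n}\to 0$, exactly as the term $\langle f_1,f_1\rangle_1$ is handled in the proof of Theorem~\ref{thm:AsymptoticEandV}(b). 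All remaining degenerate configurations are bounded by powers of this quantity and thus also vanish, so the factorial moments converge and $\tilde\xi_\varrho\xrightarrow{d}\Pi$.

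Finally, the Weibull limit follows at once. Since $F_\varrho^{(1)}$ is the smallest point of $\xi_\varrho$, its rescaling $\varrho^{n\alpha/(n-2k)}F_\varrho^{(1)}$ is the smallest point of $\tilde\xi_\varrho$, so for $x>0$ the event $\{\varrho^{n\alpha/(n-2k)}F_\varrho^{(1)}>x\}$ coincides with $\{\tilde\xi_\varrho((0,x])=0\}$. As $\lambda(\{x\})=0$, the convergence $\tilde\xi_\varrho\xrightarrow{d}\Pi$ yields
\[
\PP\big(\varrho^{n\alpha/(n-2k)}F_\varrho^{(1)}>x\big)=\PP\big(\tilde\xi_\varrho((0,x])=0\big)\longrightarrow\PP\big(\Pi((0,x])=0\big)=e^{-\lambda((0,x])}=e^{-\beta x^{(n-2k)/\alpha}},
\]
which is the stated Weibull distribution function. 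I expect the main obstacle to be the rigorous verification that the degenerate (shared-flat) contributions to the factorial moments vanish after rescaling: this is exactly where the hypothesis $k<n/2$ and the precise scaling exponent $n/(n-2k)$ are needed, forcing $\varrho^{\,k-n}\to 0$, and it requires controlling the three-flat integral uniformly over directions, in the same spirit as the covariance asymptotics of Theorem~\ref{thm:AsymptoticEandV}(b).
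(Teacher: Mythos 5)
Your proposal is correct in substance but follows a genuinely different route from the paper. The paper does not run the method of moments itself: it invokes Theorem 1.1 of \cite{STScaling} (equivalently Proposition 2 of \cite{ST}) as a black box and only verifies its two hypotheses, namely that $q_\varrho(x)\to\beta x^{(n-2k)/\alpha}$ (the exact first-moment computation, done via Proposition \ref{prop:GeneralBody} with $\d$ replaced by $x^{1/\a}\varrho^{-n/(n-2k)}$) and that $r_\varrho(x)\to 0$, where $r_\varrho$ is a supremum over a fixed flat of the expected number of partners within the shrinking threshold; the paper bounds $r_\varrho(x)\le \k_k\k_{n-2k}R(A)^k x^{(n-2k)/\a}\varrho^{-(n-k)}$. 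These two conditions are exactly the first moment and the dominant ``shared-flat'' degenerate configuration in your factorial-moment expansion, so the analytic core of the two arguments coincides; your $\varrho^{n+k}r_\varrho^{2(n-2k)}=O(\varrho^{k-n})$ estimate is the same computation in different clothing. What your route buys is self-containedness (no appeal to the external scaling-limit theorem); what it costs is that you must carry out the full combinatorial bookkeeping yourself: all degenerate configurations with $v<2j$ distinct flats (trees, stars, cycles on the pair-multigraph), not just the single-shared-flat triple, and you should take \emph{joint} factorial moments over finitely many disjoint intervals (together with moment-determinacy of the Poisson law) to conclude convergence of the point processes rather than only of the one-dimensional marginals $\tilde\xi_\varrho(U)$. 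These steps are standard and your sketch indicates the right bounds (e.g.\ $\|f_1\|_\infty=O(\varrho^{k-n})$ controls star configurations), but they are precisely the content packaged in the cited theorem, which moreover yields the quantitative rate $\sqrt{r_\varrho(x)}=O(\varrho^{-(n-k)/2})$ recorded in the remark following the theorem --- something the bare method of moments does not provide. Your derivation of the Weibull limit from $\PP(\tilde\xi_\varrho((0,x])=0)\to e^{-\beta x^{(n-2k)/\alpha}}$ agrees with the paper. One small point: the factor $\varrho^{k-n}\to0$ only needs $k<n$; the hypothesis $k<n/2$ enters earlier, in making $n-2k>0$ so that the proximity process, the exponent $n/(n-2k)$ and the spherical-coordinate integration in $(L+M)^\perp$ are meaningful.
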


\begin{proof}
Our aim is to apply Theorem 1.1 in \cite{STScaling} (or its reformulation as Proposition 2 in \cite{ST}), which yields the result once conditions (4) and (5) in \cite{STScaling} (or the first and the second part of condition (25) in \cite{ST}) are fulfilled. For this, we define for $x>0$ the function $q_\varrho(x)$ by 
\begin{align*}
q_\varrho(x)=\frac{1}{ 2}\,\EE\sum_{(E,F)\in X_{\neq}^2}&{\bf 1}\{0<d(E,F)\leq x^{1/\a}\varrho^{-n/(n-2k)}\wedge\d^{1/\a}\}\\
&\qquad\times{\bf 1}\{ m(E,F)\in A_\varrho,\,[E,F]>0\}\,{1\over 2}\cH^0(\phi(E,F)\cap C)\,.
\end{align*}
Now, observe that there exists $\varrho_0>0$, depending on $x$, $\delta$, $n$, $k$ and $\alpha$, such that $x^{1/\a}\varrho^{-n/(n-2k)}\wedge\d^{1/\a}=x^{1/\a}\varrho^{-n/(n-2k)}$ for all $\varrho\geq\varrho_0$. Using the fact that the Poisson process $X$ is of type $(S_2)$ and applying Proposition \ref{prop:GeneralBody} with $\delta=x^{1/\alpha}\varrho^{-n/(n-2k)}$, we see that 
\begin{equation*}
\begin{split}
q_\varrho(x) &=\frac{\g^2}{ 2(n-2k)}{\ell(A_\varrho)\,\varrho^{-n}\, x^{(n-2k)/\alpha}}\int\limits_{G(n,k)}\int\limits_{G(n,k)}[L,M]\,\s_{(L+M)^\perp}(C\cap (L+M)^\perp)\,\QQ(\dint M)\,\QQ(\dint L)\\
&= \beta\,x^{(n-2k)/\alpha}
\end{split}
\end{equation*}
for all $\varrho\geq\varrho_0$. Especially, $\lim\limits_{\varrho\to\infty}q_\varrho(x)= \beta\,x^{(n-2k)/\alpha}$, which is condition (4) of \cite[Theorem 1.1]{STScaling} (or, equivalently, the first part of condition (25) in \cite{ST}). Next, we define for $x>0$ the function $r_\varrho(x)$ by 
\begin{equation*}
\begin{split}
r_\varrho(x)=\sup_{\stackrel{L\in G(n,k)}{y_L\in L^\perp}}&\g\int\limits_{G(n,k)}\int\limits_{M^\perp} \,{\bf 1}\{0<d(L+y_L,M+y_M)\leq x^{1/\a}\varrho^{-n/(n-2k)}\wedge\d^{1/\a}\}\\
&\times{\bf 1}\{m(L+y_L,M+y_M)\in A_\varrho,\,[L,M]>0\}\,{1\over 2}\cH^0(\phi(L,M)\cap C)\,\ell_{M^\perp}(\dint y_M)\,\QQ(\dint M)\,.
\end{split}
\end{equation*}
Denote by $R(A)$ the radius of the smallest ball containing $A$ and note that $R(A_\r)=\r R(A)$. Using this together with the fact that $[L,M]\leq 1$, and arguing as in the proof of Theorem \ref{thm:IntensityMeasureSegmentProcess}, we see that, for $\varrho\geq\varrho_0$,
\begin{equation*}
\begin{split}
r_\varrho(x) & = \sup_{\stackrel{L\in G(n,k)}{y_L\in L^\perp}}\g\int\limits_{G(n,k)}\int\limits_{(L+M)^\perp} [L,M]\, \ell_k((A_\r-y_L-(y_M/2))\cap L)\\
&\qquad\qquad\qquad\times {\bf 1}\{0<\|y_M\|\leq x^{1/\a}\varrho^{-2/(n-2k)}\}\,\ell_{(L+M)^\perp}(\dint y_M)\,\QQ(\dint M)\\
&\leq \sup_{\stackrel{L\in G(n,k)}{y_L\in L^\perp}}\g\int\limits_{G(n,k)}\int\limits_{(L+M)^\perp} [L,M]\,\k_k\,R(A_\varrho)^k\\
&\qquad\qquad\qquad\times {\bf 1}\{0<\|y_M\|\leq x^{1/\alpha}\varrho^{-n/(n-2k)}\}\,\ell_{(L+M)^\perp}(\dint y_M)\,\QQ(\dint M)\\
&\leq\k_k\k_{n-2k}\,R(A)^k\,x^{(n-2k)/\alpha}\,\varrho^{-(n-k)}\,,
\end{split}
\end{equation*}
where we have passed to spherical coordinates in $(L+M)^\perp$ in the last step (see also the proof of Theorem 3 in \cite{ST}). Hence, $r_\varrho(x)\to 0$, as $\varrho\to\infty$ for all $x>0$, since $A$ is bounded and thus satisfies $R(A)<\infty$. This is condition (5) of \cite[Theorem 1.1]{STScaling} (or the second part of condition (25) in \cite{ST}), which implies the convergence of $\varrho^{n\alpha/(n-2k)}\xi_\varrho$ to the limiting Poisson point process on $\RR_+$ with intensity measure given by \eqref{eq:IntensityMeasurePPP}. This completes the proof.
\end{proof}

\begin{remark}\rm
One can provide an upper bound for the rate of convergence of $F_\varrho^{(m)}$ to its limit. Indeed, using the notation from above, \cite[Theorem 1.1]{STScaling} (or Proposition 2 in \cite{ST}) implies that an upper bound for $$\Big|\PP(\varrho^{n\alpha/(n-2k)}F_\varrho^{(m)}>x)- \exp\big({-\beta x^{(n-2k)/\alpha}}\big)\sum_{j=0}^{m-1}\frac{(\beta x^{(n-2k)/\alpha})^j}{ j!}\Big|$$ is given by $\sqrt{r_\varrho(x)}=c\,\varrho^{-(n-k)/2}$ for any $\varrho\geq\varrho_0$ and $m\in\NN$, where $c>0$ is a constant only depending on $n$, $k$, $\alpha$, $A$, $C$ and $x$.
\end{remark}

\appendix
\section{Appendix}

We present here a construction of an ordinary point process as well as of a $k$-flat process of type $(S_r)$, for an arbitrary integer $r\geq 2$, which is weakly stationary and not a Poisson process. Our construction refines and extends that provided in \cite{BaddeleySilverman}.  

The point processes will be constructed in $d$-dimensional Euclidean space $\RR^d$, $d\geq 1$. Let $\{Q_i\}_{i\geq 1}$ be a dissection of $\RR^d$ into cubes of unit volume and let $\{N_i\}_{i\geq 1}$ be a sequence of independent random variables with values in $\NN_0=\{0,1,2,\ldots\}$ and common distribution. In what follows, we denote by $N$ a generic random variable following the same law and we assume that $\EE N$ is finite. For each 
$i\in\NN$, let $\{X^i_j\}_{j\geq 1}$ be a sequence of independent random variables, which are uniformly distributed in $Q_i$ and such that all the random variables 
$\{X^i_j\}_{i,j\geq 1}$ and $\{N_i\}_{i\geq 1}$ are independent. Then we define the point process $Y$ in $\RR^d$ by 
\begin{equation}\label{eq:defPPX}
Y:=\sum_{i=1}^\infty\sum_{j=1}^{N_i}\delta_{X^i_j}\,.
\end{equation}
Note that, by construction, $Y$ is not stationary if $N$ has finite support. However, $Y$ is weakly stationary. To see this, let $A\subset\RR^d$ be a Borel set and observe that
\begin{align*}
\EE Y(A)&=  \EE\sum_{i= 1}^\infty\sum_{j=1}^{N_i}{\bf 1}\{X^i_j\in A\}
=\sum_{i= 1}^\infty\sum_{m=0}^\infty\PP(N=m)\EE\sum_{j=1}^{m}{\bf 1}\{X^i_j\in A\}\\
&=\sum_{i= 1}^\infty\sum_{m=0}^\infty m\,\PP(N=m)\,\ell_d\left(A\cap Q_i\right) 
=\sum_{m=0}^\infty m\,\PP(N=m)\sum_{i= 1}^\infty\ell_d\left(A\cap Q_i\right)\\
&=(\EE N)\,\ell_d(A)\,.
\end{align*}
Let us recall from \cite[Theorem 4.4]{Heinrich} the fact that a point process is a Poisson process if and only if it is of type $(S_r)$ for all integers $r\geq 2$. Moreover, $Y$ is a Poisson process if and only if the random variable $N$ has a Poisson distribution.

\begin{proposition}\label{thm:App1}
Let $\ka\geq 2$ be an integer and let the random variable $N$ be concentrated on $\{0,1,\ldots,\ka\}$ and such that
\begin{equation*}
\EE\left[N(N-1)\cdots(N-m+1)\right]=\left(\EE N\right)^m, \qquad\text{for all}\ m\in\{1,\ldots,\ka\}\,.
\end{equation*}
Then the point process $Y$ is of type $(S_r)$ for $r=2,\ldots,\ka$, but not for $r=\ka+1$. In particular, $Y$ is not a Poisson process.
\end{proposition}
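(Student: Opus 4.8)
The plan is to verify the defining relation $\Theta^{(r)}=\Theta^r$ of property $(S_r)$ on the $\pi$-system of bounded product sets $A_1\times\cdots\times A_r$, which generates the Borel $\sigma$-algebra of $(\RR^d)^r$; since both measures are $\sigma$-finite, agreement on this $\pi$-system forces agreement everywhere. Writing $\lambda:=\EE N$, recall from the computation preceding the proposition that $\Theta=\lambda\,\ell_d$, so that $\Theta^r(A_1\times\cdots\times A_r)=\lambda^r\prod_{\ell=1}^r\ell_d(A_\ell)$.

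First I would compute $\Theta^{(r)}(A_1\times\cdots\times A_r)=\EE\sum_{(x_1,\ldots,x_r)\in Y^r_{\neq}}\prod_\ell\mathbf{1}_{A_\ell}(x_\ell)$ by sorting the $r$ distinct points according to the cube containing each of them. A cube assignment is a map $\phi\colon\{1,\dots,r\}\to\NN$, and grouping the positions by their common cube value yields a set partition $P=\{B_1,\ldots,B_p\}$ of $\{1,\ldots,r\}$ together with an injection of the blocks into distinct cubes. Conditioning on the counts $\{N_i\}$ and using that within a cube the points are i.i.d.\ uniform while everything is independent across cubes, the ordered choice of distinct indices inside a cube hit by a block of size $|B_b|$ contributes the falling factorial $N_i(N_i-1)\cdots(N_i-|B_b|+1)$, whose expectation is the factorial moment $\EE[N(N-1)\cdots(N-|B_b|+1)]$. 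Carrying this out (all summands being nonnegative, so the interchange of the countable cube sum with the expectation is justified) gives
\[
\Theta^{(r)}(A_1\times\cdots\times A_r)=\sum_{P}\Big(\prod_{b=1}^p\EE\big[N(N-1)\cdots(N-|B_b|+1)\big]\Big)\,\Sigma(P),
\]
where $\Sigma(P):=\sum_{c}\prod_{b=1}^p\prod_{\ell\in B_b}\ell_d\big(A_\ell\cap Q_{c(b)}\big)$, and $c$ ranges over all injections of $\{B_1,\ldots,B_p\}$ into the cubes.

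The decisive observation is a combinatorial identity: a pair (partition $P$, injection $c$) is precisely the data of an arbitrary cube assignment $\phi\colon\{1,\ldots,r\}\to\NN$, with $P$ recovered as the partition of $\{1,\ldots,r\}$ into fibres of $\phi$. Hence
\[
\sum_P\Sigma(P)=\sum_{\phi\colon\{1,\dots,r\}\to\NN}\prod_{\ell=1}^r\ell_d\big(A_\ell\cap Q_{\phi(\ell)}\big)=\prod_{\ell=1}^r\Big(\sum_i\ell_d(A_\ell\cap Q_i)\Big)=\prod_{\ell=1}^r\ell_d(A_\ell),
\]
using that $\{Q_i\}$ partitions $\RR^d$. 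Now, for $r\le\ka$ every block size $|B_b|$ is at most $r\le\ka$, so the hypothesis forces $\EE[N(N-1)\cdots(N-|B_b|+1)]=\lambda^{|B_b|}$ and therefore $\prod_b\EE[\,\cdots\,]=\lambda^{\sum_b|B_b|}=\lambda^r$ for \emph{every} partition $P$. Pulling this common factor out of the first display and invoking the identity yields $\Theta^{(r)}(A_1\times\cdots\times A_r)=\lambda^r\prod_\ell\ell_d(A_\ell)=\Theta^r(A_1\times\cdots\times A_r)$, which establishes $(S_r)$ for $r=2,\ldots,\ka$.

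For $r=\ka+1$ I would argue directly rather than through the full formula. Since $N$ is concentrated on $\{0,\ldots,\ka\}$, each cube almost surely carries at most $\ka$ points, so $Y$ never contains $\ka+1$ distinct points inside a single cube; consequently $\Theta^{(\ka+1)}(Q_1\times\cdots\times Q_1)=\EE[N_1(N_1-1)\cdots(N_1-\ka)]=0$. On the other hand $\Theta^{\ka+1}(Q_1\times\cdots\times Q_1)=\lambda^{\ka+1}\ell_d(Q_1)^{\ka+1}=\lambda^{\ka+1}>0$, the construction being non-degenerate so that $\lambda>0$. Thus $\Theta^{(\ka+1)}\neq\Theta^{\ka+1}$ and $Y$ fails $(S_{\ka+1})$; since a Poisson process would be of type $(S_r)$ for all $r\ge 2$ by \cite[Theorem 4.4]{Heinrich}, $Y$ is not Poisson. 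The main obstacle is the bookkeeping in the first step — setting up the conditioning on $\{N_i\}$, extracting the factorial moments from the within-cube ordered selections, and keeping the across-cube independence straight — after which the partition/fibre bijection makes the positive direction collapse cleanly and the negative direction is immediate from boundedness of $N$.
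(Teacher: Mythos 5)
Your argument is correct, and it reaches the same two essential ingredients as the paper — independence across cubes and the falling factorial $N_i(N_i-1)\cdots(N_i-|B|+1)$ arising from ordered selections of distinct points within one cube — but it organizes them differently. The paper proves $(S_r)$ by induction on $r$: it restricts to product sets with each $A_j$ inside a single cube, peels off the last coordinate, and splits into two cases according to whether $A_r$ shares a cube with an earlier $A_i$; the single-cube factorial-moment computation only ever appears for the ``all sets in one cube'' subcase. You instead compute $\Theta^{(r)}(A_1\times\cdots\times A_r)$ in closed form for arbitrary bounded Borel sets, as a sum over set partitions of $\{1,\ldots,r\}$ (the fibres of the cube assignment) weighted by products of factorial moments, and then observe that the hypothesis makes every partition weight equal to $(\EE N)^r$, so the partition--injection sum collapses to $\prod_\ell \ell_d(A_\ell)$ via the bijection with arbitrary maps $\phi\colon\{1,\ldots,r\}\to\NN$. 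Your route buys an explicit formula for $\Theta^{(r)}$ (which also makes the failure of $(S_{\ka+1})$ on $Q_1^{\ka+1}$ immediate, exactly as in the paper) and makes transparent why the factorial-moment condition is precisely what is needed; the paper's induction avoids the partition bookkeeping at the cost of a case analysis. One shared loose end, not specific to your write-up: the negative conclusion for $r=\ka+1$ needs $\EE N>0$ (otherwise $Y$ is the empty, hence Poisson, process); you flag this as non-degeneracy of the construction, and the paper leaves it implicit, with Proposition \ref{thm:App2} supplying $\EE N=1$.
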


\begin{proof}
For fixed $\ka\ge 2$, we prove the claim by induction with respect to $r\in\{1,\ldots,\ka\}$. For $r=1$ there is nothing to show. Now, assume that $r\ge 2$ and that the assertion has already been proved up to $r-1$. It is sufficient to show that 
\begin{equation}\label{sr}
\EE Y^{r}_{\neq}=\big(\EE Y\big)^r,
\end{equation}
where  (as introduced in Section \ref{sec:preliminaries})
$$Y^{r}_{\neq}(\,\cdot\,)=\sum_{x_1\in Y}\ldots\sum_{x_{r-1}\in Y\setminus\{x_1,\ldots,x_{r-2}\}}\ \sum_{x_{r}\in Y\setminus\{x_1,\ldots,x_{r-1}\}}\ind\{(x_1,\ldots,x_r)\in\,\cdot\,\}\,,$$
$\EE Y^{r}_{\neq}$ is the $r$th factorial moment measure and $\EE Y$ is the intensity measure of $Y$. 

It is sufficient to verify \eqref{sr} on sets of the form $A_1\times\ldots\times A_r$, where the Borel set $A_j$ lies in the interior of a uniquely determined cube $Q_{l(j)}$, $j=1,\ldots ,r$. Here we use the fact that the  random points $X^i_j$ are distributed uniformly in $Q_i$ and hence almost surely do not lie on the boundary of $Q_i$. If 
\eqref{sr} is established for sets as described before, we can use $\sigma$-additivity in each argument to obtain the equality for arbitrary products of measurable sets. The extension to arbitrary measurable sets then follows from the usual measure extension arguments, since all measures are $\sigma$-finite.

Now, to establish the asserted equality, we distinguish two cases.

\medskip

\textit{Case I.} If $l(r)\notin\{l(1),\ldots,l(r-1)\}$, then
\begin{align*}
&Y^r_{\neq}(A_1\times\ldots\times A_r)\\
&\qquad =\sum_{x_1\in Y}\ldots\sum_{x_{r-1}\in Y\setminus\{x_1,\ldots,x_{r-2}\}}\ \sum_{x_{r}\in Y\setminus\{x_1,\ldots,x_{r-1}\}}\ind\{x_1\in A_1,\ldots,x_r\in A_r\}\\
&\qquad =\sum_{x_1\in Y}\ldots\sum_{x_{r-1}\in Y\setminus\{x_1,\ldots,x_{r-2}\}}\ind\{x_1\in A_1,\ldots,x_{r-1}\in A_{r-1}\}
\sum_{x_{r}\in Y}\ind\{x_r\in A_r\}\\
&\qquad =Y^{r-1}_{\neq}(A_1\times\ldots\times A_{r-1})\, Y(A_r)\,,
\end{align*}
since $A_r$ is disjoint from $A_1\cup\ldots\cup A_{r-1}$. Using the fact that $Y^{r-1}_{\neq}(A_1\times\ldots\times A_{r-1})$ and $Y(A_r)$ are 
independent by construction, we obtain that
\begin{align*}
\EE Y^r_{\neq}(A_1\times\ldots\times A_r)&=\EE Y^{r-1}_{\neq}(A_1\times\ldots\times A_{r-1})\, \EE Y(A_r)\\
&=\Big(\prod_{i=1}^{r-1}\EE Y(A_i)\Big)\, \EE Y(A_r)\\
&=\prod_{i=1}^{r}\EE Y(A_i)\,,
\end{align*}
which is \eqref{sr}.

\medskip

\textit{Case II.} If $l(r)\in\{l(1),\ldots,l(r-1)\}$, then, by symmetry, we can assume that $l(i)=l(r)$ for $i\in \{s,\ldots,r\}$ and $l(i)\neq l(r)$ for $i\in \{1,\ldots,s-1\}$, for some $s\in \{1,\ldots,r-1\}$. By the same reasoning as above, it follows that 
\begin{equation}\label{eqstern}
Y^r_{\neq}(A_1\times\ldots\times A_r)=Y^{s-1}_{\neq}(A_1\times\ldots\times A_{s-1})\, Y^{r-s+1}_{\neq}(A_s\times\ldots\times A_r)
\end{equation}
and that $Y^{s-1}_{\neq}(A_1\times\ldots\times A_{s-1})$ and $Y^{r-s+1}_{\neq}(A_s\times\ldots\times A_r)$ are independent. If $s\ge 2$, then the induction hypothesis applies to both factors on the right-hand side of \eqref{eqstern}, since $s-1\le r-1$ and $r-s+1\le r-1$, which completes the induction step. If otherwise $s=1$, then all the sets $A_1,\ldots,A_r$ are subsets of the same cube, $Q_1$ say. Writing $[m]:=\{1,\ldots,m\}$, for $m\in\NN$, and $[0]:=\emptyset$, and recalling that $N_1$ has the same distribution as $N$, we conclude that 
\begin{align*}
&\EE Y^r_{\neq}(A_1\times \ldots\times  A_r)\\
&\qquad = \sum_{m=0}^\infty\PP(N=m)\sum_{i_1\in[m]}\sum_{i_2\in [m]\setminus\{i_1\}}\ldots\sum_{i_r\in  [m]\setminus\{i_1,\ldots,i_{r-1}\}}
\PP(X^1_{i_1}\in A_1,\ldots,X^1_{i_r}\in A_r)\\
&\qquad = \sum_{m=0}^\infty m(m-1)\cdots (m-r+1) \PP(N=m)\prod_{i=1}^r\PP(X^1_i\in A_i)\\
&\qquad = \EE[N(N-1)\cdots (N-r+1)] \prod_{i=1}^r\PP(X^1_i\in A_i)\allowdisplaybreaks\\
&\qquad = (\EE N)^r \,\prod_{i=1}^r\PP(X^1_i\in A_i)= \prod_{i=1}^r\left[\EE(N)\PP(X^1_i\in A_i)\right]\\
&\qquad = \prod_{i=1}^r\EE Y( A_i)\,,
\end{align*}
which is \eqref{sr}. This completes the induction argument in the second case. 

From the preceding argument we also see that 
$$\EE Y^{k+1}_{\neq}(A_1\times\ldots\times A_{k+1})=0$$ 
if $A_1,\ldots,A_{k+1}$  are contained in the same cube. Hence, $Y$ is not of type $(S_{k+1})$ and consequently, $Y$ cannot be a Poisson process. 
\end{proof}

We now construct, for each integer $\ka\geq 2$, a random variable $N$, which satisfies the assumptions of Proposition \ref{thm:App1}, where for simplicity we assume that $\EE N=1$. This can be seen as the Poissonian analogue to a result in \cite{EllisNewman}, where non-Gaussian random variables were investigated, whose moments up to some fixed order coincide with those of a Gaussian random variable.

\begin{proposition}\label{thm:App2}
For each integer $\ka\geq 2$ there exists a random variable $N$ whose distribution has support $\{0,1,\ldots,\ka-2,\ka\}$ and is such that
\begin{equation*}
\EE\left[N(N-1)\cdots(N-m+1)\right]=\left(\EE N\right)^m=1, \qquad\text{for all}\ m\in\NN.
\end{equation*}
\end{proposition}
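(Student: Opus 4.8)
The plan is to encode the whole problem in the probability generating function $G(z):=\EE[z^N]=\sum_{v\ge 0}\PP(N=v)\,z^v$ and to exploit that the descending factorial moments are exactly the derivatives of $G$ at $z=1$, that is, $\EE[N(N-1)\cdots(N-m+1)]=G^{(m)}(1)$. Since $N$ is to be supported in $\{0,1,\dots,\ka\}$, the function $G$ must be a polynomial of degree at most $\ka$, and the required identities become $G^{(m)}(1)=1$ for $m=0,1,\dots,\ka$, where $m=0$ is the normalisation $G(1)=1$ and $m=1$ gives $\EE N=1$. A polynomial of degree at most $\ka$ equals its own $\ka$-th Taylor polynomial about any point, so these conditions force
\[
G(z)=\sum_{m=0}^{\ka}\frac{(z-1)^m}{m!},
\]
the $\ka$-th partial sum of $e^{z-1}$. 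This is the promised Poissonian analogue: the full series $e^{z-1}$ is the generating function of a Poisson variable of mean $1$, all of whose factorial moments equal $1$.

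First I would check that this $G$ really is a probability generating function, i.e.\ that its monomial coefficients are nonnegative and sum to $1$. Expanding $(z-1)^m$ by the binomial theorem and collecting the coefficient of $z^v$ yields
\[
\PP(N=v)=\frac{1}{v!}\sum_{j=0}^{\ka-v}\frac{(-1)^j}{j!},\qquad 0\le v\le\ka.
\]
The inner alternating sum is the $(\ka-v)$-th partial sum of $e^{-1}$ and equals $D_{\ka-v}/(\ka-v)!$, where $D_\ell$ denotes the number of derangements of $\ell$ elements. Here lies the main (and essentially only non-routine) obstacle: one must establish the positivity $D_\ell\ge 0$, with $D_\ell=0$ precisely for $\ell=1$. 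This follows, for instance, from $D_0=1$, $D_1=0$ and the recursion $D_\ell=(\ell-1)(D_{\ell-1}+D_{\ell-2})$, which gives $D_\ell\ge 1$ for $\ell\ge 2$ by induction.

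Granting this, $\PP(N=v)>0$ for $0\le v\le\ka-2$ and for $v=\ka$, whereas $\PP(N=\ka-1)=\frac{1}{(\ka-1)!}\cdot\frac{D_1}{1!}=0$, so the support is exactly $\{0,1,\dots,\ka-2,\ka\}$; moreover $\sum_{v}\PP(N=v)=G(1)=1$ confirms normalisation. It then remains only to read off the moments from $G$: by construction $G^{(m)}(1)=1$ for $m=0,1,\dots,\ka$, which is the asserted identity $\EE[N(N-1)\cdots(N-m+1)]=(\EE N)^m=1$ throughout the range $1\le m\le\ka$ relevant for Proposition \ref{thm:App1}. For $m>\ka$ the degree-$\ka$ polynomial $G$ has $G^{(m)}\equiv 0$, so these higher factorial moments vanish; this cap is precisely the feature that makes the induced flat process fail property $(S_{\ka+1})$ in Proposition \ref{thm:App1}.
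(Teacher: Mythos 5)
Your proof is correct, and it takes a genuinely different route from the paper. The paper builds the distribution recursively: starting from $\PP(N^{(2)}=0)=\PP(N^{(2)}=2)=1/2$, it sets $\PP(N^{(\ka)}=i)=\frac{1}{i}\PP(N^{(\ka-1)}=i-1)$ for $i\in\{1,\ldots,\ka-2,\ka\}$, puts the remaining mass at $0$, and verifies the support, normalisation and factorial-moment identities by induction on $\ka$. You instead characterise the distribution by its generating function: the conditions $G^{(m)}(1)=1$ for $m=0,\ldots,\ka$ together with $\deg G\le\ka$ force $G(z)=\sum_{m=0}^{\ka}(z-1)^m/m!$, the truncated exponential, and positivity of the coefficients reduces to $D_\ell\ge 0$ with $D_1=0$ for the derangement numbers. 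Your closed form $\PP(N=v)=\frac{1}{v!}\cdot\frac{D_{\ka-v}}{(\ka-v)!}$ reproduces exactly the distributions listed in Remark \ref{rem:ExApp} for $\ka=3,4$, so the two constructions give the same law; what your argument buys is an explicit formula, a transparent reason why the atom at $\ka-1$ vanishes (namely $D_1=0$), and a clean uniqueness statement (the distribution is the only one supported in $\{0,\ldots,\ka\}$ with these factorial moments), at the cost of one combinatorial positivity fact. One remark on the statement rather than on your proof: as you correctly observe, a variable supported in $\{0,\ldots,\ka\}$ has vanishing factorial moments of order $m>\ka$, so the identity can only hold for $m\in\{1,\ldots,\ka\}$; the paper's phrase ``for all $m\in\NN$'' is an overstatement, and its own proof (like yours) establishes the identity precisely in the range needed for Proposition \ref{thm:App1}.
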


\begin{proof}
We construct such a random variable recursively and start with the case $\ka=2$. To indicate the dependence of $N$ on $\ka$, we write $N^{(\ka)}$ for the moment. We define $N^{(2)}$ by 
$$\PP(N^{(2)}=0):=\PP(N^{(2)}=2):=\frac{1}{2}\,,$$ which ensures that $\EE(N^{(2)})=\EE(N^{(2)}(N^{(2)}-1))=1$. If the distribution of $N^{(m)}$ with the required properties has been constructed for $m=1,\ldots,\ka-1$, we proceed to define 
\begin{align*}
\PP(N^{(\ka)}=i)&:=\frac{1}{i}\,\PP(N^{(\ka-1)}=i-1)>0\,,\qquad i\in \{1,\ldots,\ka-2,\ka\}\,,\\
\PP(N^{(\ka)}=0)&:=1-\sum_{i\in\{1,\ldots,\ka-2,\ka\}}\PP(N^{(\ka)}=i)\,.
\end{align*}
That the support of the distribution of $N^{(\ka)}$ is indeed concentrated on $\{0,1,\ldots,\ka-2,\ka\}$ can be seen from 
\begin{align*}
1-\sum_{i\in\{1,\ldots,\ka-2,\ka\}}\PP(N^{(\ka)}=i)
& = 1-\sum_{i\in \{1,\ldots,\ka-2,\ka\}}\frac{1}{i}\PP(N^{(\ka-1)}=i-1)\\
& = 1-\PP(N^{(\ka-1)}=0)-\sum_{i\in \{2,\ldots,\ka-2,\ka\}}\frac{1}{i}\PP(N^{(\ka-1)}=i-1)\\
& = \sum_{i\in \{2,\ldots,\ka-2,\ka\}}\left(1-\frac{1}{i}\right)\PP(N^{(\ka-1)}=i-1)> 0\,.
\end{align*}
Here we use that, by induction, the support of the distribution of $N^{(\ka-1)}$ is equal to $\{0,\ldots,\ka-3,\ka-1\}$. 
Furthermore, for $\ka\ge 3$ we have 
$$
\EE  N^{(\ka)}= \sum_{i\in \{0,1,\ldots,\ka-2,\ka\}}i\,\PP(N^{(\ka)}=i)\quad=\sum_{j\in \{0,1,\ldots,\ka-3,\ka-1\}}\PP(N^{(\ka-1)}=j)=1.
$$
For $2\le r\le \ka$, we obtain
\begin{align*}
&\EE\big[N^{(\ka)}(N^{(\ka)}-1)\cdots(N^{(\ka)}-r+1)\big]\\
&\qquad = \sum_{i\in\{r,\ldots,\ka-2,\ka\}}i(i-1)\cdots(i-r+1)\,\PP(N^{(\ka)}=i)\\
&\qquad = \sum_{i\in\{r,\ldots,\ka-2,\ka\}}(i-1)\cdots((i-1)-(r-1)+1)\,\PP(N^{(\ka-1)}=i-1)\\
&\qquad = \sum_{j\in\{r-1,\ldots,\ka-3,\ka-1\}}j\cdots(j-(r-1)+1)\,\PP(N^{(\ka-1)}=j)\\
&\qquad = \EE\big[N^{(\ka-1)}\cdots(N^{(\ka-1)}-(r-1)+1)\big]=1,
\end{align*}
since $1\le r-1\le \ka-1$ and by the properties of the distribution of $N^{(\ka-1)}$.
\end{proof}

\begin{remark}\label{rem:ExApp}\rm 
To illustrate our construction, let us state the distributions of $N^{(3)}$ and $N^{(4)}$ explicitly: 
\begin{align*}
&\PP(N^{(3)}=0)=\frac{1}{3}\,,\qquad\PP(N^{(3)}=1)=\frac{1}{2}\,,\qquad\PP(N^{(3)}=3)=\frac{1}{6}\,,\\
&\PP(N^{(4)}=0)=\frac{3}{8}\,,\qquad\PP(N^{(4)}=1)=\frac{1}{3}\,,\qquad\PP(N^{(4)}=2)=\frac{1}{4}\,,\qquad\PP(N^{(4)}=4)=\frac{1}{24}\,.
\end{align*}
\end{remark}

\medskip

Before we turn to $k$-flat processes in $\RR^n$, we need some preparations. For $d\in\{1,\ldots,n-1\}$ and a (fixed) subspace $E_0\in G(n,n-d)$, we define 
$G_0(n,d):=\{U\in G(n,d):U+E_0=\RR^n\}$ and $A_0(n,d):=\{E\in A(n,d):L(E)\in G_0(n,d)\}$. 
By Lemma 13.2.1 in \cite{SW}, we have $\nu_d(G(n,d)\setminus G_0(n,d))=0$. We write $\mu_d^*$ for the restriction of $\mu_d$ to $ A_0(n,d)$. The map $T:A_0(n,d)\to E_0$, $E\mapsto T(E)$, with $\{T(E)\}=E\cap E_0$, assigning to $E\in A_0(n,d)$ the unique intersection point with $E_0$, is continuous and hence measurable. We shall now compute the image measure of $\mu_d^*$ under $T$. For Borel sets $B\subset E_0$, we have
\begin{align*}
\left(T \mu_d^*\right)(B)&=\int\limits_{A_0(n,d)}\ind\{T(E)\in B\}\, \mu_d^*(\dint E)\\
&=\int\limits_{G_0(n,d)}\int\limits_{L^\perp}\ind\{(L+x)\cap E_0\cap B\neq \emptyset\}\, \ell_{L^\perp}(\dint x)\, \nu_d(\dint L)\\
&=\int\limits_{G_0(n,d)} \ell_{L^\perp}(B\vert L^\perp)\, \nu_d(\dint L)\\
&=\ell_{E_0}(B)\int\limits_{G_0(n,d)}[E_0,L]\, \nu_d(\dint L)\\
&=a(n,d)\,\ell_{E_0}(B)\,,
\end{align*}
with $a(n,d):=c(n,n-d,d)$ given by \eqref{eq:Defcnk1k2}. Consequently, the image measure of $\mu_d^*$ under $T$ is equal to $a(n,d)\ell_{E_0}$. This shows that, for fixed Borel sets $A\subset G(n,d)$ and $B\subset E_0$ with $\ell_{E_0}(B)\in(0,\infty)$,
\begin{equation}\label{QQ0}
\QQ_0(A):=\frac{1}{a(n,d)\,\ell_{E_0}(B)}\int\limits_{A_0(n,d)}\ind\{E-T(E)\in A\}\ind\{T(E)\in B\}\, \mu_d^*(\dint E)
\end{equation}
yields a probability measure on $G(n,d)$. The definition of $\QQ_0$ is independent of the particular choice of $B$, since for fixed $A$, the integral on the right-hand side 
of \eqref{QQ0} defines a locally finite and translation invariant measure in $E_0$ with respect to $B$. Thus, we conclude that
$$
\int\limits_{A_0(n,d)}g(E-T(E),T(E))\, \mu_d(\dint E)=a(n,d)\int\limits_{G(n,d)\times E_0}g(U,x)\, (\QQ_0\otimes \ell_{E_0})(\dint(U,x))
$$
for all measurable functions $g:G(n,d)\times E_0\to [0,\infty]$. Hence, we get 
\begin{equation}\label{decomposition}
\int\limits_{A_0(n,d)}f(E)\, \mu_d(\dint E)=a(n,d) \int\limits_{E_0} \int\limits_{G(n,d)} f(U+x)\, \QQ_0(\dint U)\,\ell_{E_0}(\dint x).
\end{equation}
for all measurable functions $f:A_0(n,d)\to [0,\infty]$. Since the complement of $A_0(n,d)$ in $A(n,d)$ is a set of $\mu_d$-measure zero, we can replace $A_0(n,d)$ by $A(n,d)$ in \eqref{decomposition}. The measure $\QQ_0$, which will serve as a directional distribution in the following, arose from the invariant measure $\mu_d$ by a Palm-type construction with respect to $E_0$. It can be interpreted as the conditional distribution of a random $d$-flat in general position to $E_0$ given that the intersection point with $E_0$ is at the origin.

We now turn to $k$-flat processes. For this, let $k\in\{1,\ldots,n-1\}$ be fixed and put $d:=n-k$. Let $E_0\in G(n,d)$ be an arbitrary subspace, which is identified with $\RR^d$, and,  for some fixed integer $r\geq 2$, let $Y$ be the point process in $E_0$ described above. Furthermore, let $\QQ_0$ be the  probability measure on $G(n,k)$ defined at \eqref{QQ0}. To each point $x\in Y$ we attach an independent random subspace $L\in G(n,k)$ with distribution $\QQ_0$ (independently of $x$). In other words, let ${Y}_{\QQ_0}$ be obtained from $Y$ by independent $\QQ_0$-marking, see \cite[Chapter 3.5]{SW}. Then we define 
\begin{equation}\label{eq:XflatDef}
X:=\sum_{(x,L)\in{Y}_{\QQ_0}}\delta_{L+x}\,.
\end{equation}
As it will follow from the proposition below, the random measure $X$ is locally finite, hence a $k$-flat process in $\RR^n$. 
Since the random variable $N$ from Proposition \ref{thm:App2} has finite support and since $\QQ_0$ is concentrated on $G_0(n,d)$, it follows that $X$ is not a Poisson process. 
We argue now that the $(S_r)$-property of $Y$ carries over to  $X$. Moreover, we show that the intensity measure of $X$ is translation invariant, implying that $X$ is a weakly stationary $k$-flat process in $\RR^n$.

\begin{proposition}
If the point process $Y$ is of type $(S_r)$ for some integer $r\geq 2$, then the $k$-flat process $X$ is of type $(S_r)$. In particular, if the construction of $Y$ is based on random variables from Proposition \ref{thm:App2}, then $X$ is a weakly stationary $k$-flat process of type $(S_r)$, which is not a Poisson process.
\end{proposition}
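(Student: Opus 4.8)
The plan is to deduce the $(S_r)$-property of $X$ from that of $Y$ through the independence of the $\QQ_0$-marking, and then to read off weak stationarity and the non-Poisson property from the explicit intensity measure and from the boundedness of $N$. The geometric input is that the map $T\colon E_0\times G_0(n,k)\to A(n,k)$, $(x,L)\mapsto L+x$, is injective with inverse $E\mapsto(E\cap E_0,L(E))$: since $\QQ_0$ is concentrated on $G_0(n,k)$ and $(L+x)\cap E_0=\{x\}$ for $L\in G_0(n,k)$, $x\in E_0$, each flat of $X$ meets $E_0$ exactly in the $Y$-point that generated it. Hence distinct marked points yield distinct flats, so $X$ is simple, and applying the marking formula (cf.\ \cite[Section 3.5]{SW}) in the case $r=1$ gives, for measurable $g\ge0$,
\[
\int_{A(n,k)}g\,\dint\Theta_X=\int_{E_0}\int_{G(n,k)}g(L+x)\,\QQ_0(\dint L)\,\Theta_Y(\dint x).
\]
Since $\Theta_Y=(\EE N)\,\ell_{E_0}$, formula \eqref{decomposition} rewrites this as $\Theta_X=\frac{\EE N}{a(n,k)}\,\mu_k$; in particular $\Theta_X$ is locally finite, so $X$ is a genuine $k$-flat process, and it is translation invariant, so $X$ is weakly stationary.

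Next I would compute the $r$-th factorial moment measure. Using the injectivity of $T$ to match $X^r_{\neq}$ with $(Y_{\QQ_0})^r_{\neq}$, and then the defining property of independent marking, one obtains for measurable $g\ge0$ on $A(n,k)^r$
\[
\Theta_X^{(r)}(g)=\int_{(E_0)^r}\int_{G(n,k)^r}g(L_1+x_1,\dots,L_r+x_r)\,\QQ_0^{\otimes r}(\dint(L_1,\dots,L_r))\,\Theta_Y^{(r)}(\dint(x_1,\dots,x_r)).
\]
At this point the hypothesis enters: $Y$ of type $(S_r)$ means $\Theta_Y^{(r)}=\Theta_Y^r$. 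Substituting this and regrouping the product measures by Fubini, the right-hand side becomes $\int_{A(n,k)^r}g\,\dint\Theta_X^{\otimes r}$, where I recognise $\Theta_X$ through the $r=1$ identity above. Thus $\Theta_X^{(r)}=\Theta_X^r$, i.e.\ $X$ is of type $(S_r)$. (Running the same computation for general $m$ shows, via injectivity of the coordinatewise map, that $X$ is of type $(S_m)$ if and only if $Y$ is, which is clarifying but not needed here.)

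For the non-Poisson claim I would argue directly from the boundedness of $N$. Fix one cube $Q_{i_0}$ of the dissection and set $A:=\{E\in A(n,k):L(E)\in G_0(n,k),\ E\cap E_0\in\inter(Q_{i_0})\}$. Because every flat of $X$ meets $E_0$ precisely in its generating $Y$-point, $X(A)=N_{i_0}\le\ka$ almost surely, whereas \eqref{decomposition} yields $\Theta_X(A)=\EE N\in(0,\infty)$. A Poisson process with this intensity would have $X(A)$ Poisson distributed and hence unbounded, a contradiction; therefore $X$ is not Poisson. Combining the three parts with Propositions \ref{thm:App1} and \ref{thm:App2} (taking $\ka=r$, so that the associated $Y$ is of type $(S_r)$ but not $(S_{r+1})$) gives the final statement.

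The step I expect to require the most care is the factorial-moment bookkeeping in the second paragraph: one must justify that the sum over $X^r_{\neq}$ corresponds bijectively to the sum over $(Y_{\QQ_0})^r_{\neq}$, which rests on the almost sure injectivity of $T$, and then apply the independent-marking identity at the level of the $r$-th factorial moment measure rather than merely the intensity measure. Once these are in place, the remainder is a direct substitution of the $(S_r)$-hypothesis together with the disintegration \eqref{decomposition}.
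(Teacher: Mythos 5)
Your proposal is correct and follows essentially the same route as the paper: both compute the intensity measure of $X$ via the disintegration \eqref{decomposition}, transfer the $(S_r)$-property through the factorial-moment identity for independent $\QQ_0$-markings followed by Fubini, and rule out the Poisson property by observing that the number of flats of $X$ meeting a bounded region of $E_0$ is almost surely bounded (since $N$ has finite support and $\QQ_0$ is concentrated on $G_0$). Your added remarks on the injectivity of $(x,L)\mapsto L+x$ and the resulting simplicity of $X$ make explicit a point the paper leaves implicit, but the argument is the same.
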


\begin{proof}
The intensity measure of $X$ is given by
\begin{equation}\label{eq:IntMeasXflat}
\EE X = \int\limits_{E_0}\int\limits_{G(n,d)}\delta_{L+x}\, \QQ_0(\dint L)\, \ell_{E_0}(\dint x)
= \frac{1}{a(n,d)}\int\limits_{A(n,d)}\delta_E\, \mu_d(\dint E)= \frac{1}{a(n,d)}\mu_d\,,
\end{equation}
where \eqref{decomposition} was used and the fact that ${Y}_{\QQ_0}$ is obtained from $Y$ by independent $\QQ_0$-marking. Moreover, 
$X$ is not a Poisson process. In fact, since $Y([0,1]^d\cap E_0)$ is deterministically bounded and $\QQ_0$ is concentrated on $G_0(n,d)$, it 
follows that the number of flats in $X$ hitting $[0,1]^d\cap E_0$ is not Poisson distributed.

The remaining assertion follows from
\begin{align*}
\EE X^r_{\neq}&=\int\limits_{(E_0)^r}\int\limits_{G(n,d)^r}\delta_{(L_1+x_1,\ldots,L_r+x_r)}\, (\QQ_0)^r(\dint (L_1,\ldots,L_r))
\, \left(\EE Y\right)^r_{\neq}(\dint(x_1,\ldots,x_r))\\
&=\int\limits_{(E_0)^r}\int\limits_{G(n,d)^r}\delta_{(L_1+x_1,\ldots,L_r+x_r)}\, (\QQ_0)^r(\dint (L_1,\ldots,L_r))
\, \left(\EE Y\right)^r(\dint(x_1,\ldots,x_r))\\
&=(\EE X)^r\,,
\end{align*}
where we first used the fact that $X$ arises from independent $\QQ_0$-marking of $Y$, then the assumption that $Y$ is of type $(S_r)$, and finally Fubini's theorem and the special case $r=1$ of the first equality.
\end{proof}

Let us summarize our results. First, we have constructed for any integers $d\geq 1$ and $r\geq 2$ a weakly stationary point process $Y$ of type $(S_r)$ in $\RR^d$, which is not a Poisson process. Based on $Y$, we have then constructed for all integers $n\geq 2$, $k\in\{1,\ldots,n-1\}$ and $r\geq 2$ a weakly stationary $k$-flat process $X$ in $\RR^n$ of type $(S_r)$, which is not a Poisson process. In this context, the following natural question arises: Is it possible to replace weak stationarity by stationarity? We will provide a positive answer in the point process case ($k=0$) and pose it as an open problem to construct, for $k\geq 1$, a stationary $k$-flat process of type $(S_r)$, which is not a Poisson process.   In contrast to the point processes case, the results in \cite{Kallenberg80} show that a stationary $k$-flat process satisfying a regularity condition is already a Cox process (i.e., a doubly stochastic Poisson process); see, for instance, Theorem 4.2 and Corollary 5.2 in [23]. Moreover, an arbitrary Cox process $Z$ with directing random measure $M$, which is of type $(S_r)$ for some $r\ge 2$,  satisfies $\EE M^r=\EE Z^{r}_{\neq}=(\EE Z)^r=(\EE M)^r$, where the first equality is Theorem 4.2 (i) in \cite{Kallenberg2011}. Hence, $M(B)$ is $\PP$-almost surely constant, for every measurable set $B$. Thus, $M$ is deterministic and therefore $Z$ is a  Poisson process. These facts provide considerable restrictions to examples of stationary $k$-flat processes of type $(S_r)$, which are not Poisson processes.

Let us now return to the point processes case. As noticed above, the point process $Y$ defined in \eqref{eq:defPPX} is not stationary, if the random variable $N$ has finite support. For this reason we follow the strategy already used in \cite{BaddeleySilverman} and perturb $Y$ by a random shift to enforce stationarity. Formally, let $\xi$ be a uniform random point in $[0,1]^d$, which is independent of all other random variables $\{X^i_j\}_{i,j\geq 1}$ and $\{N_i\}_{i\geq 1}$, and define $\widetilde{Y}:=Y+\xi$. It is not difficult to see that $\widetilde{Y}$ is indeed a stationary point process, which is not a Poisson process as long as $N$ is not Poisson distributed. We show now that the perturbed point process $\widetilde{Y}$ inherits the $(S_r)$-property from $Y$. 

\begin{proposition}\label{thm:App4}
Let $\ka\geq 2$ be an integer, and let $N$ and $Y$ be defined as in Proposition \ref{thm:App1}. Then $\widetilde{Y}$ is a stationary process of type $(S_r)$ for $r=2,\ldots,\ka$, but not Poisson.
\end{proposition}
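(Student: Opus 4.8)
The plan is to reduce all three assertions to the behaviour of the factorial moment measures of $Y$ under the independent random shift $\xi$. Writing $\widetilde Y=Y+\xi$, one has $\widetilde Y(A)=Y(A-\xi)$ for every Borel set $A\subset\RR^d$, and since $\xi$ is independent of $Y$, conditioning on $\xi$ gives the intensity measure
\[
\EE\widetilde Y(A)=\EE_\xi\big[\EE Y(A-\xi)\big]=\EE_\xi\big[(\EE N)\,\ell_d(A-\xi)\big]=(\EE N)\,\ell_d(A),
\]
by translation invariance of $\ell_d$ together with the formula $\EE Y=(\EE N)\ell_d$ established in the appendix. The crucial identity is the analogous one for higher order: using the bijection $(x_1,\dots,x_r)\mapsto(x_1+\xi,\dots,x_r+\xi)$ between the distinct $r$-tuples of $Y$ and of $\widetilde Y$, I would show, for product sets,
\[
\EE\widetilde Y^r_{\neq}(A_1\times\dots\times A_r)=\int_{[0,1]^d}\EE Y^r_{\neq}\big((A_1-s)\times\dots\times(A_r-s)\big)\,\dint s .
\]

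For the $(S_r)$-property with $2\le r\le\ka$ I would insert the hypothesis $\EE Y^r_{\neq}=(\EE Y)^r=((\EE N)\ell_d)^r$ supplied by Proposition \ref{thm:App1}. Then the integrand above factorises as $\prod_{j=1}^r(\EE N)\ell_d(A_j-s)=\prod_{j=1}^r(\EE N)\ell_d(A_j)$, again by translation invariance, hence is constant in $s$ and equals $(\EE\widetilde Y)^r(A_1\times\dots\times A_r)$. This proves $\EE\widetilde Y^r_{\neq}=(\EE\widetilde Y)^r$ on products, and the extension to arbitrary measurable sets follows from $\sigma$-additivity in each coordinate and the usual measure-extension argument, exactly as in the proof of Proposition \ref{thm:App1}. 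Thus $\widetilde Y$ is of type $(S_r)$ for $r=2,\dots,\ka$.

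Stationarity comes from the lattice structure: $Y$ is invariant in distribution under translations by vectors of $\ZZ^d$ (the dissection $\{Q_i\}$ and the i.i.d. data $\{N_i\}$, $\{X^i_j\}$ are periodic under the lattice), while $\xi$ is uniform on the fundamental cell $[0,1)^d$ and independent of $Y$. For $z\in\RR^d$ I would write $\xi+z=\lfloor\xi+z\rfloor+\eta$ with $\lfloor\xi+z\rfloor\in\ZZ^d$ and $\eta=(\xi+z)\bmod\ZZ^d$; conditioning on $\xi$ and using integer-translation invariance of $Y$ shows $\widetilde Y+z\stackrel{d}{=}Y+\eta$, and since $\eta$ is again uniform on $[0,1)^d$ and independent of $Y$ one obtains $\widetilde Y+z\stackrel{d}{=}\widetilde Y$, as already indicated before the statement.

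The delicate point, and the one I expect to be the \emph{main obstacle}, is the non-Poisson claim, because the transfer of $(S_r)$ does not reverse: one cannot argue that $\widetilde Y$ is $(S_r)$ only when $Y$ is, since the $s$-averaging could in principle destroy a discrepancy. I would therefore analyse the signed measure $D:=\EE Y^{\ka+1}_{\neq}-(\EE Y)^{\ka+1}$ directly. By cube-wise independence together with $\EE[N(N-1)\cdots(N-m+1)]=(\EE N)^m$ for $m\le\ka$ and $\EE[N(N-1)\cdots(N-\ka)]=0$ (the support of $N$ lies in $\{0,\dots,\ka\}$), every configuration whose $\ka+1$ arguments do not all share one cube contributes identically to $\EE Y^{\ka+1}_{\neq}$ and to $(\EE Y)^{\ka+1}$, so $D$ is supported on the set of tuples whose arguments all lie in a single cube, where, after normalising $\EE N=1$, one has $D(A_1\times\dots\times A_{\ka+1})=-\sum_i\prod_{j=1}^{\ka+1}\ell_d(A_j\cap Q_i)$. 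By the displayed identity, $\widetilde Y$ is $(S_{\ka+1})$ if and only if $\int_{[0,1]^d}D\big((A_1-s)\times\dots\times(A_{\ka+1}-s)\big)\,\dint s=0$ for all products; but this integrand equals $-\sum_i\prod_{j}\ell_d\big((A_j-s)\cap Q_i\big)$, which is nonpositive and strictly negative on a set of $s$ of positive measure already for $A_1=\dots=A_{\ka+1}=[0,1)^d$. Hence the average cannot vanish, $\widetilde Y$ fails $(S_{\ka+1})$, and by the characterisation in \cite[Theorem 4.4]{Heinrich} it is not a Poisson process.
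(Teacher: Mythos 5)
Your proof is correct, and for the core assertion --- the transfer of the $(S_r)$-property from $Y$ to $\widetilde Y$ --- it is essentially identical to the paper's: both condition on the independent shift $\xi$ to write $\EE\widetilde Y^r_{\neq}$ as an average over $s\in[0,1]^d$ of $\EE Y^r_{\neq}$ evaluated on translated sets, insert $\EE Y^r_{\neq}=(\EE Y)^r=\ell_d^{\otimes r}$ (after normalising $\EE N=1$), and use translation invariance of Lebesgue measure. The differences lie in the parts the paper leaves to the reader. Stationarity is only asserted in the text preceding the proposition (``it is not difficult to see''); your lattice-periodicity argument with $\eta=(\xi+z)\bmod\ZZ^d$ is the standard justification and is fine. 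For the non-Poisson claim, the paper again relies on the preceding discussion, where the intended argument is the much shorter one already used for the flat process $X$: since $N\le\ka$, the count $\widetilde Y(B)$ is deterministically bounded for bounded $B$, hence cannot be Poisson distributed. Your route --- identifying the defect measure $D=\EE Y^{\ka+1}_{\neq}-(\EE Y)^{\ka+1}$ as $-\sum_i\prod_j\ell_d(\,\cdot\,\cap Q_i)$ via the cube-wise factorisation from Proposition \ref{thm:App1}, and showing its $\xi$-average is strictly negative on $A_1=\cdots=A_{\ka+1}=[0,1)^d$ --- is more laborious but also valid, and it yields the stronger conclusion that $\widetilde Y$ fails $(S_{\ka+1})$ (not merely that it is non-Poisson), which the boundedness argument does not give.
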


\begin{proof}
We may assume that $\EE N =1$. Then,  $Y$ has intensity one and the intensity measure of $Y$ is the Lebesgue measure on $\RR^d$. 
Let $r\in \{2,\ldots,k\}$ and fix Borel sets $A_1,\ldots,A_r\subset\RR^d$. Then,
\begin{align*}
\EE\widetilde{Y}_{\neq}^r(A_1\times\ldots\times A_r)&=\EE\sum_{(x_1,\ldots,x_r)\in(Y+\xi)_{\neq}^r}\ind\{(x_1,\ldots,x_r)\in A_1\times\ldots\times A_r\}\\
&=\EE\sum_{(x_1,\ldots,x_r)\in Y_{\neq}^r}\ind\{(x_1+\xi,\ldots,x_r+\xi)\in A_1\times\ldots\times A_r\}\\
&=\int\limits_{(\RR^d)^r}\int\limits_{[0,1]^d}\ind\{x_1+z\in A_1,\ldots,x_r+z \in A_r\}\,\ell_d(\dint z)\, \EE Y^r_{\neq}(\dint(x_1,\ldots,x_r))\,.
\end{align*}
Since $Y$ is of type $(S_r)$, we conclude that
\begin{align*}
\EE\widetilde{Y}_{\neq}^r(A_1\times\ldots\times A_r)&=\int\limits_{\RR^d}\ldots\int\limits_{\RR^d} \ell_d\left((A_1-x_1)\cap\ldots\cap(A_r-x_r)\cap[0,1]^d\right)\, \ell_d(\dint x_1)\ldots\ell_d(\dint x_r)\\
&=\ell_d(A_1)\cdots \ell_d(A_r)\,,
\end{align*}
by Fubini's theorem. On the other hand, since $\widetilde{Y}$ is stationary, we get
$$
\int\limits_{(\RR^d)^r}\ind\{(x_1,\ldots,x_r)\in A_1\times\ldots\times A_r\}\,\big(\EE\widetilde{Y}\big)^r(\dint (x_1,\ldots,x_r))
=\ell_d(A_1)\cdots \ell_d(A_r)\,,
$$
which shows that $\EE\widetilde{Y}_{\neq}^r=(\EE\widetilde{Y})^r$. 
\end{proof}

\subsection*{Acknowledgement}
The authors are grateful to Lothar Heinrich for discussions on the topic of the Appendix.

DH and WW have been supported by Deutsche Forschungsgemeinschaft via the Research Group ``Geometry and Physics of Spatial Random Systems''. CT has been supported by Deutsche Forschungsgemeinschaft via SFB/TR 12 ``Symmetries and Universality in Mesoscopic Systems''.



\begin{thebibliography}{30}\small

\bibitem{BaddeleySilverman}
{\sc Baddeley, A.J.; Silverman, B.W.}: {\em A cautionary example on the use of second-order methods for analyzing point patterns}, Biometrics \textbf{40}, 1089--1093 (1984).

\bibitem{BL}
{\sc Baumstark, V.; Last, G.}: {\em Gamma distributions for stationary Poisson flat processes}, Adv. Appl. Probab. \textbf{41}, 911--939 (2009).

\bibitem{Billingsley}
{\sc Billingsley, P.}: {\em Probability and Measure}, Wiley, New York (1986).

\bibitem{Bogachev}
{\sc Bogachev, V.I.}: {\em Measure Theory, Volume II}, Springer, Berlin (2007).

\bibitem{BorHug}
{\sc B\"or\"oczky, K.; Hug, D.}: {\em Stability of the reverse Blaschke-Santalo inequality for zonoids and applications}, Adv. Appl. Math. \textbf{44}, 309--328 (2010).

\bibitem{BoLi}
{\sc Bourgain, J.; Lindenstrauss, J.}: {\em Projection bodies}, in: {\em Geometric Aspects of Functional Analysis (1986/87)}, Lecture Notes in Mathathematics {\bf 1317}, 250--270, Springer, Berlin (1988). 

\bibitem{Cinlar}
{\sc \c{C}inlar, E.}: {\em Probability and Stochastics}, Springer, New York (2011).

\bibitem{Dudley1}
{\sc Dudley, R.M.}: {\em Distances of probability measures and random variables}, Ann. Math. Statistics \textbf{39}, 1563--1572 (1968).  

\bibitem{Dud}
{\sc Dudley, R.M.}: {\em Real Analysis and Probability}, Wadsworth \& Brooks/Cole, Pacific Grove, CA (1989).

\bibitem{Edelman}
{\sc Edelman, A.; Arias, T.A.; Smith, S.T.}: {\em The geometry of algorithms with orthogonality constraints}, SIAM J. Matrix Anal. Appl. 
\textbf{20}, 303--353 (1998).

\bibitem{EichelsbacherTh}
{\sc Eichelsbacher, P.; Th\"ale, C.}: {\em New Berry-Esseen bounds for non-linear functionals of Poisson random measures}, Electron. J. Probab. \textbf{19}, article 102 (2014).

\bibitem{EllisNewman}
{\sc Ellis, R.S.; Newman, C.M.}: {\em Limit theorems for sums of dependent random variables occurring in statistical mechanics}, Z. Wahrscheinlichkeitstheorie verw. Gebiete \textbf{44}, 117--139 (1978).



\bibitem{GHR96}
{\sc Goodey, P.; Howard, R.; Reeder, M.}: {\em Processes of flats induced by higher dimensional processes III}, Geom. Dedicata \textbf{61},  257--269 (1996).

\bibitem{HammLee}
{\sc Hamm, J.; Lee, D.}: {\em Grassmann discriminant analysis: a unifying view on subspace-based learning}, in: {\em Proceedings of the 25th international conference on machine learning}, ACM, 376--383 (2008).

\bibitem{Heinrich}
{\sc Heinrich, L.}: {\em Asymptotic methods in statistics of random point processes}, in: \textit{Stochastic Geometry, Spatial Statistics and Random Fields} ed. by E. Spodarev, Lecture Notes in Mathematics \textbf{2068}, 115--150, Springer, New York (2013).

\bibitem{HugLastWeil}
{\sc Hug, D.; Last, G.; Weil, W.}: {\em Distance measurements on processes of flats}, Adv. Appl. Probab. \textbf{35}, 70--95 (2003).

\bibitem{HS}
{\sc Hug, D.; Schneider, R.}: {\em Stability results involving surface area measures of convex bodies}, Rend. Circ. Mat. Palermo (2) Suppl. \textbf{70}, 21--51 (2002).

\bibitem{HugSchneiderTessellations07}
{\sc Hug, D.; Schneider, R.}: {\em Asymptotic shapes of large cells in random tessellations}, Geom. Funct. Anal. \textbf{17}, 156--191 (2007). 

\bibitem{HS2010}
{\sc Hug, D.; Schneider, R.}: {\em Large faces in Poisson hyperplane mosaics}, Ann. Probab. \textbf{38}, 1320--1344 (2010). 

\bibitem{HS2011Zonoids}
{\sc Hug, D.; Schneider, R.}: {\em Reverse inequalities for zonoids and their applications}, Adv. Math. \textbf{228}, 2634--2646 (2011). 

\bibitem{HugSchneiderSchuster2008}
{\sc Hug, D.; Schneider, R.; Schuster, R.}: {\em Integral geometry of tensor valuations}, Adv. Appl. Math. \textbf{41}, 482--509 (2008).

\bibitem{HTW}
{\sc Hug, D.; T\"urk, I.; Weil, W.}: {\em Flag measures for convex bodies}, in: {\it Asymptotic Geometric Analysis}, 
ed. by M. Ludwig et al., Fields Institute Communications {\bf 68}, 145--187, Springer, New York  (2013).

\bibitem{Kallenberg80}
{\sc Kallenberg, O.}: {\em On the structure of stationary flat processes. II}, Z. Wahrscheinlichkeitstheorie verw. Gebiete \textbf{52}, 127--147 (1980). 

\bibitem{Kallenberg2011}
{\sc Kallenberg, O.}: {\em Iterated Palm conditioning and some Slivnyak-type theorems for Cox and cluster processes}, J. Theor. Probab. \textbf{24}, 875--893 (2011).

\bibitem{K}
{\sc Kiderlen, M.}: {\em Stability results for convex bodies in geometric tomography}, Indiana Univ. Math. J. \textbf{57}, 1999--2038 (2008).

\bibitem{LastPenrose}
{\sc Last, G.; Penrose, M.D.}: {\em Poisson process Fock space representation, chaos expansion and covariance inequalities}, Probab. Theory Relat. Fields \textbf{150}, 663--690 (2011).

\bibitem{LPST}
{\sc Last, G.; Penrose, M.D.; Schulte, M.; Th\"ale, C.}: {\em Moments and central limit theorems for some multivariate Poisson functionals},  Adv. Appl. Probab. \textbf{46}, 348--364 (2014).


\bibitem{Matheron}
{\sc Matheron, G.}: {\em Random Sets and Integral Geometry}, Wiley, New York (1975).


\bibitem{Mecke91}
{\sc Mecke, J.}: {\em On the intersection density of flat processes}, Math. Nachr. \textbf{151}, 69--74 (1991).

\bibitem{RS}
{\sc Reitzner, M; Schulte, M.}: {\em Central limit theorems for U-statistics of Poisson point processes}, Ann. Probab. \textbf{41}, 3879--3909 (2013).


\bibitem{PeZheng10} 
{\sc Peccati, G.; Zheng, C.}: {\em Multi-dimensional Gaussian fluctuations on the Poisson space}, Electron. J. Probab. {\bf 15}, 1487--1527 (2010). 

\bibitem{ProkhorovStatu}
{\sc Saulis, L.; Statulevi\v{c}ius, V.}: {\em Limit theorems on large deviations}, in: {\it Limit Theorems of Probability Theory}, ed. by Y.V. Prokhorov and V. Statulevi\v{c}ius, 185--266, Springer, Berlin (2000).


\bibitem{Schneider93}
{\sc Schneider, R.}: {\em Convex Bodies: The Brunn-Minkowski Theory}, 2nd ed., Cambridge University Press, Cambridge (2014). 

\bibitem{Schneider99}
{\sc Schneider, R.}: {\em A duality for Poisson flats}, Adv. Appl. Probab. \textbf{31}, 63--68 (1999).

\bibitem{SW}
{\sc Schneider, R.; Weil, W.}: {\em Stochastic and Integral Geometry}, Springer, Berlin (2008).

\bibitem{STScaling}
{\sc Schulte, M.; Th\"ale, C.}: {\em The scaling limit of Poisson-driven order statistics with applications in geometric probability}, Stochastic Processes Appl. \textbf{122}, 4096--4120 (2012).

\bibitem{ST}
{\sc Schulte, M.; Th\"ale, C.}: {\em Distances between Poisson $k$-flats}, Methodol. Comput. Appl. Probab. \textbf{16}, 311--329 (2014).

\bibitem{spodarev1}
{\sc Spodarev, E.}: {\em On the rose of intersections of stationary flat processes}, Adv. Appl. Probab. \textbf{33}, 584--599 (2001).


\bibitem{Thomas84}
{\sc Thomas, C.}: {\em Extremum properties of the intersection densities of stationary
Poisson hyperplane processes}, Math. Operationsforsch. Statist. Ser. Statist. \textbf{15}, 443--449 (1984).

\bibitem{Weil1979}
{\sc Weil, W.}: {\em Centrally symmetric convex bodies and distributions. II}, Israel J. Math. \textbf{32}, 173--182 (1979).

\bibitem{Weil87}
{\sc Weil, W.}: {\em Point processes of cylinders, particles, and flats}, Acta Appl. Math. \textbf{9}, 103–-136 (1987).

\bibitem{W01}
{\sc Weil, W.}: {\em Mixed measures and functionals of translative integral geometry}, Math. Nachr. \textbf{223}, 161--184 (2001).

\end{thebibliography}
\end{document}